\def\bfc{{\bf c}}
\def\bff{{\bf f}}
\def\bfx{{\bf x}}
\def\bfz{{\bf z}}
\newtheorem{theorem}{Theorem}
\newtheorem{lemma}{Lemma}
\newtheorem{example}{Example}
\newtheorem{definition}{Definition}
\newenvironment{proof}{\begin{trivlist}\item[]{\emph{Proof.}}}
               {\hfill$\Box$\end{trivlist}}
\begin{document}
\title{A Multivariate Spline based Collocation Method for 
\\ Numerical Solution of Partial Differential Equations}
\author{Ming-Jun Lai\footnote{mjlai@uga.edu, Department of Mathematics, 
University of Georgia, Athens, GA 30602. 
This author is supported by the Simons Foundation collaboration grant \#864439.}
\and Jinsil Lee
\footnote{Jinsil.Lee@uga.edu, Department of Mathematics, University of Georgia, Athens, GA 30602}}
\date{}
\maketitle

\begin{abstract}
We propose a collocation method based on multivariate polynomial splines over triangulation or 
tetrahedralization for numerical solution of partial differential equations.  
We start with a detailed explanation of the method for the Poisson equation and 
then extend the study to the second order elliptic PDE in non-divergence form. 
We shall establish the convergence of our method and show that the numerical solution can approximate 
the exact PDE solution very well. 
Then we present a large amount of numerical experimental results to demonstrate 
the performance of the method over the 2D and 3D settings. 
In addition, we present a comparison with the existing multivariate spline methods  
in \cite{ALW06} and \cite{LW17} to show that 
the new method produces  a similar and sometimes more accurate approximation 
in a more efficient fashion.   
\end{abstract}

\section{Introduction}
In this paper, we  propose and study a new collocation method based on multivariate 
splines for numerical solution of partial differential equations over polygonal domain in 
$\mathbb{R}^d$ for $d\ge 2$.  Instead of using a second order elliptic equation in 
divergence form:
\begin{equation}
	\label{GPDE}
	\left\{
	\begin{array}{cl} -\sum_{i,j=1}^d \frac{\partial}{\partial x_i}(a^{ij}(x)\frac{\partial}{\partial 
			x_j}u)+\sum_{i=1}^d b^{i}(x) \frac{\partial}{\partial x_i}u+c^1(x)u&= f, \quad x \in 
		\Omega\subset \mathbb{R}^d, \cr 
		u&=g,  \quad \hbox{ on } \partial \Omega
	\end{array}\right. 
\end{equation}
which is often used for various finite element methods, we  discuss in this paper  
a more general form of second order elliptic PDE in non-divergence form:
\begin{equation}
	\label{GPDE2}
	\left\{
	\begin{array}{cl} \sum_{i,j=1}^d a^{ij}(x)\frac{\partial}{\partial x_i}\frac{\partial}{\partial 
			x_j}u+\sum_{i=1}^d b^{i}(x) \frac{\partial}{\partial x_i}u+c(x)u&= f, \quad x \in 
		\Omega\subset \mathbb{R}^d, \cr 
		u&=g,  \quad \hbox{ on } \partial \Omega,
	\end{array}\right. 
\end{equation}
where the PDE coefficient functions $a^{ij}(x), i, j=1, \cdots, d$ 
are in $L^\infty(\Omega)$ and  satisfy the standard elliptic condition. 
In addition, when $d\ge 2$, we shall assume the so-called Cord\'es condition, 
see (\ref{cordes}) in a later section or  see  \cite{SS13}.  

Numerical solutions to the 2nd order
PDE in the non-divergence form have been studied extensively recently. 
See some studies in \cite{SS13}, \cite{LW17}, \cite{MY17},  \cite{WW19}, \cite{S19}, and etc.. 
The method in this paper provides a new and more effective approach.  
In this paper, 
we mainly use the Sobolev space $H^2(\Omega)$. 
It is known when  $\Omega$ is convex (cf. \cite{G85}), the solution to the 
Poisson equation with zero boundary condition, i.e. $g=0$ will be in
$H^2(\Omega)$.  Recently, the researchers in \cite{GL20} showed that when $\Omega$ 
has an uniformly positive reach, the solution of (\ref{GPDE2}) with zero boundary 
condition will be in $H^2(\Omega)$. Various domains of uniformly positive reach, e.g. 
star-shaped domain and  domains with holes are shown in \cite{GL20}. See more examples in 
the next preliminary section.  Many more domains other than convex domains can have $H^2$ solution.  
For any $u\in H^2(\Omega)$, we use the standard $H^2$ norm 
\begin{equation}
	\label{H2norm}
	\|u\|_{H^2} =\|u\|_{L^2(\Omega)}+ 
	\|\nabla u\|_{L^2(\Omega)}+ \sum_{i,j=1}^d 
	\|\frac{\partial}{\partial x_i}\frac{\partial}{\partial x_j}u\|_{L^2(\Omega)}
\end{equation}
for all $u$ on $H^2(\Omega)$ and the semi-norm
\begin{equation}
	\label{H2semi}
	|u|_{H^2} = \sum_{i,j=1}^d 
	\|\frac{\partial}{\partial x_i}\frac{\partial}{\partial x_j}u\|_{L^2(\Omega)}. 
\end{equation} 

Since we will use multivariate spline functions to approximate the solution $u
\in H^2(\Omega)$,  we  use $C^r$ smooth spline 
functions with $r\ge 1$ and the degree $D$ of splines  
sufficiently large satisfying $D\ge 3r+2$ in $\mathbb{R}^2$ and 
$D\ge 6r+3$  in $\mathbb{R}^3$.  
Let $S^r_D(\triangle)$ be the spline space of degree $D$ and 
smoothness $r$ over triangulation or tetrahedralization $\triangle$ of $\Omega$.
How to use such spline functions has been explained in \cite{LW04}, 
\cite{ALW06}, \cite{S15},  and \cite{S19}, and etc.. For convenience, 
we shall give a preliminary on multivariate splines in the next section. 

We now explain our spline based collocation method. For simplicity, 
we use  the standard Poisson equation which is a special case of the PDE (\ref{GPDE2}). 
\begin{equation}
	\label{Poisson}
	\left\{
	\begin{array}{cl}
		-\Delta u & = f, \quad {\hbox{ in }} \Omega\subset \mathbb{R}^d, \cr 
		u& = g,  \quad \hbox{ on } \partial \Omega. 
	\end{array}
	\right. 
\end{equation}
When $\Omega$ has a uniform positive reach, the solution to the Poisson equation will be 
in $H^2(\Omega)$. 
We  shall use $C^r$ spline functions with $r\ge 2$ to approximate the solution $u$. 
In addition, we shall use the so-called domain points (cf. \cite{LS07} or the next section) to be
the collocation points.   Letting  $\xi_i, i=1, \cdots, N$ 
be the domain points of $\triangle$ and degree $D'>0$, where $D'$ will be different from $D$, 
our multivariate spline based collocation method is to seek a spline function $s
\in S^r_D(\triangle)$ satisfying 
\begin{equation}
	\label{Poisson2}
	\left\{
	\begin{array}{cl}
		-\Delta s(\xi_i) & = f(\xi_i), \quad \forall \xi_i\in \Omega\subset \mathbb{R}^d,\cr 
		s(\xi_i) & = g(\xi_i),  \quad \forall \xi_i\in \partial \Omega, 
	\end{array}
	\right. 
\end{equation}
where $i=1, \cdots, N$.  
It is known a multivariate spline space  is a linear vector space  
which is spanned by a set of basis functions. However,  it is difficult to construct 
locally supported basis functions in $C^r(\Omega)$ with $r\ge 1$ due to the complication of
the smoothness conditions over $\triangle$. Typically, any small perturbation of a vertex in 
$\triangle$ may change the dimension of $S^r_D(\triangle)$.  
On the other hand,  the smoothness conditions can be written 
as a system of linear equations, i.e. $H{\bfc}=0$, where {$\bfc$} 
is the coefficient vector of spline function $s\in S^r_D(\triangle)$ and 
$H$ is the matrix consisting of all smoothness 
condition across each interior edge of $\triangle$ (cf. \cite{LS07} or the next section). 
To overcome this difficulty of constructing locally supported basis spline functions, 
we will begin with a discontinuous  spline space  $s\in S^{-1}_D(\triangle)$ and then add
the smoothness conditions $H{\bf c}=0$ as constraints in addition to the constraint of boundary condition. 
One of the key ideas is to let a computer decide how
to choose ${\bf c}$ to satisfy $H{\bf c}=0$ and (\ref{Poisson2}) above simultaneously.  
Clearly, (\ref{Poisson2}) leads to  a linear system which may not have a unique solution. 
It may be an over-determined linear system if $D'> D$ or an under-determined linear system 
if $D'<D$.  Our method is to use a least squares solution if the system is overdetermined 
or a sparse solution if the system is under-determined (cf. \cite{LW21}).  

To establish the convergence of the spline based collocation solution 
as the size of $\triangle$ goes to zero, we  define a new norm $\|u\|_L$ 
on $H^2(\Omega)\cap H^1_0(\Omega)$ for the Poisson equation as follows. 
\begin{equation}
	\label{H2norm2}
	\|u\|_{L}= \|\Delta u\|_{L^2(\Omega)}.
\end{equation}
We will show that the new norm is equivalent {to} the standard norm on 
Banach space $H^2(\Omega)\cap H^1_0(\Omega)$.   That is, 
\begin{theorem}
	\label{mjlai03182021}
Suppose $\Omega\subset  \mathbb{R}^d$ be a bounded domain and the closure of $\Omega$ is 
of uniformly positive reach $r_\Omega>0$.  
Then there exist two positive constants $A$ and $B$ such that 
	\begin{equation}
		\label{equivalent}
		A \|u \|_{H^2} \le \|u \|_L \le B \| u\|_{H^2}, \quad \forall u\in H^2(\Omega)\cap H^1_0(\Omega).
	\end{equation}
\end{theorem}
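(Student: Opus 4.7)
The upper bound $\|u\|_L \le B\|u\|_{H^2}$ is immediate: by the triangle inequality
\[
\|\Delta u\|_{L^2(\Omega)} = \Bigl\|\sum_{i=1}^d \tfrac{\partial^2 u}{\partial x_i^2}\Bigr\|_{L^2(\Omega)} \le \sum_{i=1}^d \Bigl\|\tfrac{\partial^2 u}{\partial x_i^2}\Bigr\|_{L^2(\Omega)} \le |u|_{H^2} \le \|u\|_{H^2},
\]
so $B=1$ suffices. The substance of the theorem is the reverse direction $\|u\|_{H^2}\le A^{-1}\|\Delta u\|_{L^2}$.

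My plan for the lower bound is to control the three pieces of $\|u\|_{H^2}$ separately. For the $L^2$ and gradient pieces I would use the zero boundary condition. Since $u\in H^1_0(\Omega)$ and $\Omega$ is bounded, the Poincar\'e inequality gives $\|u\|_{L^2}\le C_P\|\nabla u\|_{L^2}$. Then integrating by parts (justified because $u\in H^2\cap H^1_0$),
\[
\|\nabla u\|_{L^2}^2 = -\int_\Omega u\,\Delta u \, dx \le \|u\|_{L^2}\|\Delta u\|_{L^2} \le C_P\|\nabla u\|_{L^2}\|\Delta u\|_{L^2},
\]
which yields $\|\nabla u\|_{L^2}\le C_P\|\Delta u\|_{L^2}$ and hence $\|u\|_{L^2}\le C_P^2\|\Delta u\|_{L^2}$. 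So both zeroth and first order terms are controlled by $\|\Delta u\|_{L^2}$ with constants depending only on $\Omega$.

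The main obstacle is the second-order piece $|u|_{H^2}$, i.e.\ establishing the global $H^2$-regularity estimate
\[
\sum_{i,j=1}^d\Bigl\|\tfrac{\partial^2 u}{\partial x_i\partial x_j}u\Bigr\|_{L^2(\Omega)} \le C\|\Delta u\|_{L^2(\Omega)}
\]
for every $u\in H^2(\Omega)\cap H^1_0(\Omega)$ on a domain of uniformly positive reach. This is a genuine regularity statement and does not follow from integration by parts alone on a general domain (boundary curvature terms obstruct the classical $\sum\|u_{x_ix_j}\|_{L^2}^2=\|\Delta u\|_{L^2}^2$ identity). Here I would invoke exactly the result of \cite{GL20} cited just above the theorem, which asserts that on bounded domains with uniformly positive reach $r_\Omega>0$ the solution of the Dirichlet problem for Poisson's equation with right-hand side in $L^2$ lies in $H^2$ with $\|u\|_{H^2}\le C(\Omega)\|\Delta u\|_{L^2}$; applied to the given $u$ (regarded as solving $-\Delta u = f$ with $f:=-\Delta u\in L^2$ and zero trace), this delivers the second-order bound. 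Combining the three pieces and taking $A$ to be the reciprocal of the resulting constant completes the proof.
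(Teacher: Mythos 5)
Your proposal is correct and follows essentially the same route as the paper: the upper bound via the triangle inequality (with $B=1$) and the lower bound via the Gao--Lai $H^2$-regularity theorem for domains of uniformly positive reach, applied to $u$ as the solution of the Dirichlet problem with data $f=-\Delta u$. If anything you are more careful than the paper, which cites the regularity estimate (\ref{Poiineq}) --- a bound on the second-order seminorm only --- and immediately asserts $\|u\|_{H^2}\le C\|\Delta u\|_{L^2(\Omega)}$, whereas you explicitly supply the Poincar\'e and integration-by-parts arguments that control the zeroth- and first-order terms needed to complete that step.
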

See the proof of Theorem~\ref{mjlai03082021} in a later section. 
Letting $u\in H^2(\Omega)\cap H^1_0(\Omega)$
be the solution of (\ref{Poisson}) with $g=0$ and $u_s$ be the spline solution 
of (\ref{Poisson2}), we use the first inequality above to have
\begin{eqnarray*}
	A\|u-u_s\|_{H^2}\le \|u-u_s\|_L.
\end{eqnarray*}
It can be seen from (\ref{Poisson2}) that the first equation can be written as
\begin{equation}
	\label{RMSE}
	\Delta (u_s(\xi_i) - u(\xi_i))=0, i = 1, \cdots, N
\end{equation}
which is a  discretization of $\|u-u_s\|_L^2$.  
%
Let $|\triangle|$ be the size of triangulation or tetrahedralization $\triangle$.  
Since we can use a spline function to approximate $u$ if $u$ is sufficiently smooth when
the size  $|\triangle|$ goes to zero (cf. \cite{LS07}), we seek the 
minimizer $u_s$ of a minimization  to be explained in a later section. 
Then the root mean square error (RMSE)  will be small for a sufficiently large
amount of collocation points and distributed evenly when the size $|\triangle|$ of
$\triangle$ is small. Then our Theorem~\ref{mjlai03182021}
implies that $\|u-u_s\|_{H^2}$ is small. Furthermore, we will show 
\begin{equation}
	\label{newestimate}
	\|u - u_s\|_{L^2(\Omega)}\le C |\triangle|^2 \|u- u_s\|_L \hbox{ and } 
	\|\nabla(u- u_s)\|_{L^2(\Omega)} \le C|\triangle|\|u- u_s\|_L 
\end{equation}
for a positive constant $C$, 
under the assumption that $u-u_s=0$ on $\partial \Omega$. 
These will establish the multivariate spline based collocation method for the Poisson equation.  

In general, we let ${\cal L}$ be the PDE operator in (\ref{GPDE3}). 
Note that we begin with  the second order term of the PDE just for convenience. 
\begin{equation}
	\label{GPDE3}
	\left\{
	\begin{array}{cl} 
		\displaystyle \sum_{i,j=1}^d a^{ij}(x)\frac{\partial}{\partial x_i}\frac{\partial}{\partial 
			x_j}u &= f, \quad x \in \Omega\subset \mathbb{R}^d, \cr 
		u &= g,  \quad \hbox{ on } \partial \Omega,
	\end{array}
	\right. 
\end{equation}

We shall similarly define a new norm associated with the PDE (\ref{GPDE3}):  
\begin{equation}
	\label{GH2norm2}
	\|u\|_{\cal L}= \|{\cal L}(u)\|_{L^2(\Omega)}.
\end{equation}
Similarly we will show  the following.   
\begin{theorem}
	\label{mjlai03172021}
	Suppose $\Omega\subset  \mathbb{R}^d$ be a bounded domain and the closure of $\Omega$ is 
	of uniformly positive reach $r_\Omega>0$. Suppose that the second order partial differential equation in (\ref{GPDE3}) is elliptic, 
	i.e. satisfying (\ref{elliptic}) and satisfies the Cord\'es condition if $d\ge 2$. 
	There exist two positive constants $A_1$ and $B_1$ such that 
	\begin{equation}
		\label{equivalent0}
		A_1 \|u \|_{H^2} \le \|u \|_{\cal L} \le B_1 \| u\|_{H^2}, \quad \forall u\in H^2(\Omega)\cap H^1_0(\Omega).
	\end{equation}
\end{theorem}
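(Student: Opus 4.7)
The plan is to reduce the estimate for the general operator $\mathcal{L}$ to the Laplacian case already handled in Theorem~\ref{mjlai03182021}, using the Cord\'es condition as the bridge. The upper bound in \eqref{equivalent0} is the easy half: since $a^{ij}\in L^\infty(\Omega)$, a triangle inequality and Cauchy--Schwarz give
\begin{equation*}
\|\mathcal{L}u\|_{L^2(\Omega)} \le \sum_{i,j=1}^d \|a^{ij}\|_{L^\infty(\Omega)}\,\Bigl\|\tfrac{\partial^2 u}{\partial x_i\partial x_j}\Bigr\|_{L^2(\Omega)} \le B_1\,|u|_{H^2},
\end{equation*}
and then $|u|_{H^2}\le \|u\|_{H^2}$ finishes that direction.

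For the lower bound, I would follow the classical Cord\'es scaling trick (as used e.g.\ in \cite{SS13}). Define the pointwise scalar
\begin{equation*}
\gamma(x) \;=\; \frac{\sum_{i=1}^d a^{ii}(x)}{\sum_{i,j=1}^d a^{ij}(x)^2},
\end{equation*}
which by ellipticity is bounded and bounded away from zero. The Cord\'es condition is exactly the statement that the matrix $\gamma(x)A(x)$ is uniformly close (in a quantitative Frobenius sense) to the identity: there exists $\varepsilon\in(0,1]$ such that
\begin{equation*}
\sum_{i,j=1}^d \bigl(\gamma(x)\,a^{ij}(x) - \delta_{ij}\bigr)^2 \;\le\; d - \varepsilon \quad \text{a.e.}
\end{equation*}
Multiplying this inequality against $\partial_i\partial_j u$ and summing by the Cauchy--Schwarz inequality in $(i,j)$ yields the key pointwise estimate
\begin{equation*}
\bigl(\gamma\,\mathcal{L}u - \Delta u\bigr)^2 \;\le\; (1-\varepsilon)\sum_{i,j=1}^d \Bigl(\tfrac{\partial^2 u}{\partial x_i \partial x_j}\Bigr)^2 \cdot d,
\end{equation*}
after absorbing the $d$ factor into the cross term (the precise bookkeeping here is the one place I expect to have to be careful). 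Integrating and using the Miranda--Talenti identity for $u\in H^2\cap H^1_0$, namely
\begin{equation*}
\sum_{i,j=1}^d \Bigl\|\tfrac{\partial^2 u}{\partial x_i\partial x_j}\Bigr\|_{L^2(\Omega)}^2 \;\le\; \|\Delta u\|_{L^2(\Omega)}^2,
\end{equation*}
(valid on domains of uniformly positive reach as in \cite{GL20}) gives the integrated Cord\'es inequality
\begin{equation*}
\|\gamma\,\mathcal{L}u - \Delta u\|_{L^2(\Omega)} \;\le\; \sqrt{1-\varepsilon}\;\|\Delta u\|_{L^2(\Omega)}.
\end{equation*}

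Now a one-line triangle inequality closes the argument:
\begin{equation*}
\|\Delta u\|_{L^2(\Omega)} \;\le\; \|\gamma\|_{L^\infty(\Omega)}\,\|\mathcal{L}u\|_{L^2(\Omega)} + \sqrt{1-\varepsilon}\;\|\Delta u\|_{L^2(\Omega)},
\end{equation*}
so $(1-\sqrt{1-\varepsilon})\,\|\Delta u\|_{L^2(\Omega)} \le \|\gamma\|_{L^\infty}\,\|\mathcal{L}u\|_{L^2(\Omega)}$. Combining with Theorem~\ref{mjlai03182021}, which provides $A\,\|u\|_{H^2}\le \|\Delta u\|_{L^2(\Omega)}$, yields
\begin{equation*}
A\bigl(1-\sqrt{1-\varepsilon}\bigr)\,\|u\|_{H^2} \;\le\; \|\gamma\|_{L^\infty(\Omega)}\,\|\mathcal{L}u\|_{L^2(\Omega)},
\end{equation*}
which is the required lower bound with $A_1 = A(1-\sqrt{1-\varepsilon})/\|\gamma\|_{L^\infty(\Omega)}$.

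The main obstacle is the pointwise-to-integral Cord\'es estimate; everything else is a bookkeeping calculation or a direct quote of Theorem~\ref{mjlai03182021}. In particular, the use of the Miranda--Talenti identity to dominate the full Hessian $L^2$ norm by $\|\Delta u\|_{L^2}$ is what forces the hypothesis of uniformly positive reach; this is precisely why the same geometric assumption appears in both theorems.
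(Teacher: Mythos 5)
Your route is genuinely different from the paper's. The paper proves only the easy upper bound $\|u\|_{\mathcal L}\le B_1\|u\|_{H^2}$ directly and then obtains the reverse inequality \emph{non-constructively}, by invoking Lemma~\ref{brezis1} (completeness of $H^2(\Omega)\cap H^1_0(\Omega)$ under both norms plus the open mapping theorem); the Cord\'es condition enters only in the preceding verification that $\|\cdot\|_{\mathcal L}$ is actually a norm. You instead run the quantitative Cord\'es argument end to end: the pointwise estimate $|\theta\mathcal L u-\Delta u|\le\sqrt{1-\epsilon}\,|D^2u|$ (this is exactly the paper's Lemma~\ref{lema5}, so you need not re-derive it), integration against the Hessian bound, a triangle inequality, and then Theorem~\ref{mjlai03182021}. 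Your version buys explicit constants, which the paper's soft argument cannot provide, and it is closer in spirit to how the paper later treats the lower-order terms $b^i,c^1$.

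One quantitative point needs repair. You invoke the Miranda--Talenti identity $\sum_{i,j}\|\partial_i\partial_j u\|_{L^2}^2\le\|\Delta u\|_{L^2}^2$ with constant $1$; that is valid on convex domains, but on a domain of uniformly positive reach the paper's Theorem~\ref{thm:regularity} only gives $\sum_{i,j}\|\partial_i\partial_j u\|_{L^2}^2\le C_0\|\Delta u\|_{L^2}^2$ with $C_0$ depending on $r_\Omega$ and possibly exceeding $1$. Carrying $C_0$ through, your integrated Cord\'es inequality becomes $\|\theta\mathcal Lu-\Delta u\|_{L^2}\le\sqrt{C_0(1-\epsilon)}\,\|\Delta u\|_{L^2}$, and the final constant is $1-\sqrt{C_0(1-\epsilon)}$, which is positive only under the additional hypothesis $\epsilon>1-1/C_0$ (equivalently $\epsilon>1-A^2$ in the paper's notation). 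This is not a defect unique to your proof---the paper needs exactly the same condition when it checks that $\|\cdot\|_{\mathcal L}$ is a norm, and states it explicitly in the later Theorem~\ref{mjlai03302021}---but as written your claimed constant $A_1=A(1-\sqrt{1-\varepsilon})/\|\gamma\|_{L^\infty}$ is not justified on general positive-reach domains without adding that restriction on $\epsilon$.
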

See a proof in a section later.  Similar to the Poisson equation setting, this result 
will enable us to 
establish the convergence of the spline based collocation method for the second order 
elliptic PDE in non-divergence form. Also, we will have the improved convergence similar to 
(\ref{newestimate}).  

In addition to the major advantages of spline functions: the flexibility of
the degree, the tailorable  smoothness of splines, the property of partition of the unity
of Bernstein-B\'ezier polynomials,   
there are a few more advantages of the spline based collocation methods over the traditional 
finite element methods, discontinuous Galerkin methods, virtual element methods, and etc.. 
For example, no weak formulation of the PDE solution is required and hence, 
no numerical quadrature is needed for the computation. 
For another example, it is more flexible to deal with the discontinuity arising from the 
PDE coefficients as one may easily adjust the locations of some collocation 
points close to the both sides of discontinuous curves/surfaces. In addition, the multivariate 
spline based collocation method allows one to increase the accuracy of the approximation 
by increasing the number of collocation points which can be cheaper
than finding the solution over a uniform refinement of the underlying triangulation or 
tetrahedralization within the memory budget of a computer. Besides,  our spline 
collocation method possesses tuning parameters to control  the accuracy and the smoothness  
of the spline solution.    

We shall provide many numerical results in 2D and 3D to demonstrate how well the 
spline based collocation methods can perform. Mainly, we would like to 
show the performance of solutions under the various settings: 
(1) the PDE coefficients are smooth or not very smooth, 
(2) the PDE solutions are smooth or not very smooth, (3) the domain of interest  
may not be uniformly positive reach, even very complicated
domain such such the human head used in the numerical experiment in this paper, 
and (4) the dimension $d$ can be $2$ or $3$. 
In addition, we shall compare with the existing methods in \cite{ALW06} and \cite{LW17} 
to demonstrate that the multivariate spline based collocation method can be better in the sense 
that it is more accurate and more efficient under the assumption that 
the associated collocation matrices are generated beforehand. 
Finally, we remark that we have extended our study to the biharmonic equation, i.e. 
Navier-Stokes equations and the Monge-Amp\'ere equation. 
These will leave to a near future publication, e.g. \cite{L22}.

\section{Preliminaries on  Domains of Positive Reach and Multivariate Splines}
\subsection{Domains with positive reach} 
Let us  introduce a concept on domains of interest explained in \cite{GL20}. 
\begin{definition}
Let $K\subseteq \mathbb{R}^d$ be a non-empty set. Let $r_K$ be the supremum of the number $r$ 
such that every point in 
$$P=\{x\in \mathbb{R}^d: \text{dist}(x,K)<r\}$$
has a unique projection in $K.$ The set $K$ is said to have a positive reach if $r_K>0.$ 
\end{definition}
\begin{figure}[h]
	\centering
	\begin{tabular}{cc}
		\includegraphics[width=.4\linewidth, height=4cm]{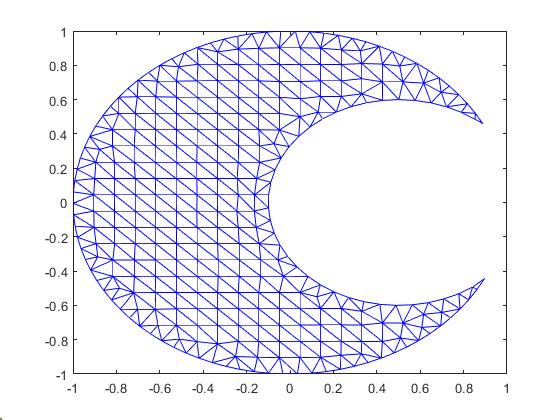}
		&
		\includegraphics[width=.4\linewidth ,height=4cm]{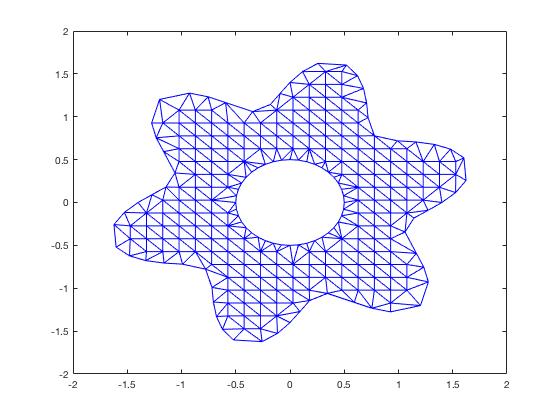}\cr 
		\includegraphics[width=.4\linewidth ,height=4cm]{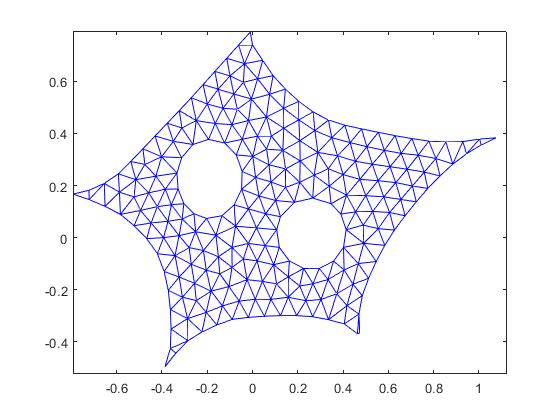}
		&
		\includegraphics[width=.4\linewidth ,height=4cm]{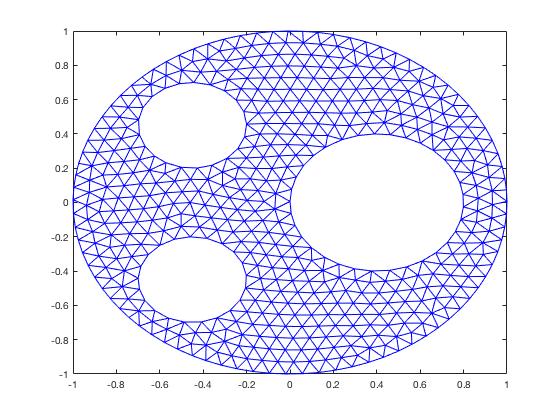}\cr 
	\end{tabular}
	\caption{Domains with positive reach}\label{positivereach}
\end{figure}
A domain with $C^2$ boundary has a positive reach. As Figure \ref{positivereach} illustrates, the domains with 
positive reach are much more general than convex domains. See  Figure~\ref{fig3D}
for domains with positive reach in the 3D setting. 
Let $B(0,\epsilon)$ be the closed ball centering at 0 with radius $\epsilon>0,$ and let $K^c$ stand for the complement 
of the set $K\in \mathbb{R}^d.$ For any $\epsilon >0,$ the set
$$E_\epsilon (K):= (K^c+B(0,\epsilon))^c \subseteq K$$
is called an $\epsilon $-erosion of $K.$
\begin{definition}
A set $K\subseteq \mathbb{R}^d$ is said to have a uniformly positive reach $r_0$ if there exists 
some $\epsilon_0>0$ such that for all $\epsilon \in [0,\epsilon_0], E_\epsilon (K)$ has a 
positive reach at least $r_0.$ 
\end{definition}
And we have the following property about these domains 
\begin{lemma}
If $\Omega \subset \mathbb{R}^d$ is of positive reach $r_0$, then for any $0<\epsilon<r_0$, the boundary of 
$\Omega_\epsilon :=\Omega+B(0,\epsilon)$ containing $\Omega$ is of $C^{1,1}$. Furthermore, $\Omega_\epsilon$ has a 
positive reach $\geq r_0-\epsilon.$
\end{lemma}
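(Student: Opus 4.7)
The plan is to rely on Federer's classical theory of sets of positive reach, which gives regularity of the distance function $d_\Omega(x) := \text{dist}(x,\Omega)$, and then to use that regularity for both assertions of the lemma. First I would note that $\Omega_\epsilon = \Omega + B(0,\epsilon)$ is precisely the closed tubular neighborhood $\{x \in \mathbb{R}^d : d_\Omega(x) \le \epsilon\}$, so that
\begin{equation*}
\partial \Omega_\epsilon \;=\; \{x \in \mathbb{R}^d : d_\Omega(x) = \epsilon\}.
\end{equation*}
The key input from Federer is that, since $\Omega$ has reach $r_0$, every point $y$ in the open annular region $\{0 < d_\Omega(y) < r_0\}$ admits a unique nearest point $\pi_\Omega(y) \in \Omega$, the map $y \mapsto \pi_\Omega(y)$ is locally Lipschitz, and the distance function $d_\Omega$ is $C^{1,1}$ there with unit gradient $\nabla d_\Omega(y) = (y - \pi_\Omega(y))/|y - \pi_\Omega(y)|$.

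For the first assertion, I would then apply the $C^{1,1}$ implicit function theorem to $d_\Omega$ at the level $\epsilon \in (0,r_0)$. Since $|\nabla d_\Omega| \equiv 1$ on $\partial \Omega_\epsilon$ and $d_\Omega \in C^{1,1}$ in a neighborhood of that level set, its level set $\partial \Omega_\epsilon$ is locally the graph of a $C^{1,1}$ function, which is exactly the claim that the outer boundary of $\Omega_\epsilon$ containing $\Omega$ is $C^{1,1}$.

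For the second assertion, I would give a direct geometric argument. Pick any $y \in \mathbb{R}^d$ with $d_{\Omega_\epsilon}(y) < r_0 - \epsilon$; I want a unique closest point on $\Omega_\epsilon$. If $y \in \Omega_\epsilon$ there is nothing to show, so assume $d_\Omega(y) > \epsilon$. Then
\begin{equation*}
d_\Omega(y) \;=\; d_{\Omega_\epsilon}(y) + \epsilon \;<\; r_0,
\end{equation*}
so $y$ has a unique nearest point $p := \pi_\Omega(y) \in \Omega$. Define the candidate projection $p_\epsilon := p + \epsilon(y-p)/|y-p|$, which lies in $\Omega_\epsilon$. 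For any competitor $q \in \Omega_\epsilon$, choose $q' \in \Omega$ with $|q-q'| \le \epsilon$; then $|y-q| \ge |y-q'| - \epsilon \ge d_\Omega(y) - \epsilon = |y - p_\epsilon|$, so $p_\epsilon$ is nearest. For uniqueness, equality forces $|y-q'| = d_\Omega(y)$, so $q' = p$ by uniqueness of the projection onto $\Omega$; then equality in the triangle inequality forces $q$ to lie on the segment $[y,p]$ at distance $\epsilon$ from $p$, i.e.\ $q = p_\epsilon$.

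The main obstacle is the $C^{1,1}$ regularity statement, since it is not elementary and must lean on Federer's characterization of reach through regularity of $d_\Omega$; the positive-reach lower bound, by contrast, is a clean direct calculation once the projection onto $\Omega$ is available. One subtle point to handle carefully in the write-up is the passage $d_\Omega(y) = d_{\Omega_\epsilon}(y) + \epsilon$, which uses that $\Omega_\epsilon$ is the closed $\epsilon$-tube around $\Omega$ together with the segment construction above; this identity is what lets the strict bound $d_{\Omega_\epsilon}(y) < r_0 - \epsilon$ feed into the reach-$r_0$ hypothesis on $\Omega$.
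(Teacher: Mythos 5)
Your argument is correct. Note that the paper itself gives no proof of this lemma: it is stated as a known property imported from \cite{GL20}, so there is nothing to compare against line by line. What you write is the standard route via Federer's theory of sets of positive reach: the distance function $d_\Omega$ is $C^{1,1}$ with unit gradient on the open annulus $\{0<d_\Omega<r_0\}$ because the nearest-point projection is locally Lipschitz there, the level set $\{d_\Omega=\epsilon\}$ is then a $C^{1,1}$ hypersurface by the implicit function theorem, and the reach estimate follows from the explicit projection $p_\epsilon=p+\epsilon(y-p)/|y-p|$ together with the identity $d_\Omega(y)=d_{\Omega_\epsilon}(y)+\epsilon$. Your uniqueness argument (equality forcing $q'=\pi_\Omega(y)$ and then collinearity) is complete. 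Two small points to make explicit in a careful write-up: the identification $\Omega_\epsilon=\{d_\Omega\le\epsilon\}$ and the existence of $q'\in\Omega$ with $|q-q'|\le\epsilon$ both use that $\Omega$ is closed (harmless here, since reach is a property of closed sets and the paper works with closures), and the implicit function theorem must be invoked in its $C^{1,1}$ form, i.e.\ one should note that Lipschitz continuity of $\nabla d_\Omega$ is inherited by the local graph parametrization.
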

In \cite{GL20}, Gao and Lai proved the following regularity theorem which will be used to prove Theorem~\ref{mjlai03182021} in the next section. 
\begin{theorem}\label{thm:regularity} 
Let $\Omega$ be a bounded domain. Suppose the closure of $\Omega$ is of uniformly positive reach $r_\Omega$. 
For any  $f\in L^2(\Omega),$ let $u\in H^1_0(\Omega)$ be the unique weak solution of the Dirichlet problem:
	\begin{align*}
		\begin{cases}
			-\Delta u&=f~~in~\Omega\\
			u&=0~~on~\partial \Omega
		\end{cases}
	\end{align*}
	Then $u\in H^{2}(\Omega)$ in the sense that
\begin{align}\label{Poiineq}
\sum_{i,j=1}^n \int_\Omega (\frac{\partial^2 u}{\partial x_i \partial x_j})^2\le C_0\int_\Omega f^2 dx
\end{align}
for a positive constant $C_0$ depending only on $r_\Omega$.
\end{theorem}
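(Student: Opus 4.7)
The plan is to obtain the $H^2$ bound with constant depending only on $r_\Omega$ by approximating $\Omega$ from inside by a family of smooth domains, deriving the estimate on each approximating domain via an integration-by-parts identity that tracks the boundary curvature, and then passing to the limit. This is the classical Kadlec--Grisvard strategy for convex domains, adapted to handle the fact that domains of positive reach can have boundary curvature of either sign but of bounded magnitude.

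First, using the uniformly positive reach of $\Omega$, I would construct a sequence of $C^\infty$ domains $\Omega_n \subset \Omega$ with $\Omega_n \uparrow \Omega$ such that the principal curvatures of $\partial \Omega_n$ are uniformly bounded in absolute value by $1/(r_\Omega - \delta_n)$ for a sequence $\delta_n \downarrow 0$. Concretely, one may start from the erosions $E_{\epsilon_n}(\Omega)$, which retain a positive reach close to $r_\Omega$ by the hypothesis, and further mollify their signed-distance functions to smooth the boundary while keeping the curvature bound. On each such $\Omega_n$, the Dirichlet problem $-\Delta u_n = f$ in $\Omega_n$, $u_n = 0$ on $\partial\Omega_n$, has a solution $u_n \in H^2(\Omega_n)\cap H^1_0(\Omega_n)$ by standard elliptic regularity on smooth domains.

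Next, on each smooth $\Omega_n$, integrating by parts twice using $u_n = 0$ on $\partial \Omega_n$ gives the Grisvard identity
$$\int_{\Omega_n} |\Delta u_n|^2\, dx = \sum_{i,j=1}^d \int_{\Omega_n} \left(\frac{\partial^2 u_n}{\partial x_i \partial x_j}\right)^2 dx + \int_{\partial \Omega_n} B_n(\nabla u_n, \nabla u_n)\, d\sigma,$$
where $B_n$ is the second fundamental form of $\partial\Omega_n$. Since $u_n$ vanishes on the boundary, $\nabla u_n$ is purely normal, so $B_n(\nabla u_n,\nabla u_n) = H_n (\partial u_n/\partial \nu)^2$ with $|H_n| \le (d-1)/(r_\Omega - \delta_n)$. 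Combined with a sharp trace inequality of the form
$$\int_{\partial\Omega_n}\left(\frac{\partial u_n}{\partial \nu}\right)^2 d\sigma \le \eta \sum_{i,j} \int_{\Omega_n}\left(\frac{\partial^2 u_n}{\partial x_i\partial x_j}\right)^2 dx + C_\eta \int_{\Omega_n}|\nabla u_n|^2 dx$$
and the Poincar\'e bound $\|\nabla u_n\|^2_{L^2(\Omega_n)} \le C \|f\|^2_{L^2}$ (uniform in $n$ since $\Omega_n \subset \Omega$), I choose $\eta$ small enough to absorb a fraction of the Hessian norm into the left-hand side, obtaining
$$\sum_{i,j} \int_{\Omega_n} \left(\frac{\partial^2 u_n}{\partial x_i \partial x_j}\right)^2 dx \le C_0 \int_{\Omega_n} |f|^2\, dx,$$
with $C_0 = C_0(r_\Omega)$ independent of $n$.

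Finally, extending $u_n$ by zero to $\Omega$ (which remains in $H^1_0(\Omega)$), the uniform bound lets me extract a subsequence converging weakly in $H^1_0(\Omega)$ and weakly in $H^2_{\mathrm{loc}}(\Omega)$ to a limit $v$. A standard weak-formulation argument shows $-\Delta v = f$ in $\Omega$ and $v \in H^1_0(\Omega)$, so uniqueness forces $v = u$, and weak lower semicontinuity of the Hessian norm on each compact $K \subset \Omega$ transfers the estimate to $u$, giving $u \in H^2(\Omega)$ with the claimed inequality. The main obstacle is the trace inequality in Step 3: a generic trace estimate introduces constants tied to the individual geometry of $\partial\Omega_n$, which would destroy the claim that $C_0$ depends only on $r_\Omega$. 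Overcoming this requires exploiting the common signed-distance function of $\Omega$ to parameterize a tubular neighborhood of all $\partial\Omega_n$ simultaneously, converting boundary integrals into volume integrals whose Jacobians are controlled purely by $r_\Omega$ (and $\mathrm{diam}(\Omega)$).
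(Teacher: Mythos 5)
The paper does not prove this theorem at all: it is quoted from Gao and Lai \cite{GL20} ("In \cite{GL20}, Gao and Lai proved the following regularity theorem\dots"), so there is no internal proof to compare against. Your outline is the standard Kadlec--Grisvard argument adapted to positive reach and is essentially the route of the cited source: inner approximation by $C^{1,1}$ domains built by dilating the erosions $E_\epsilon(\overline\Omega)$ (this is exactly where the \emph{uniform} positive reach hypothesis enters, via the paper's Lemma 1 on $\Omega+B(0,\epsilon)$ having $C^{1,1}$ boundary and reach $\ge r_0-\epsilon$), the integration-by-parts identity with boundary term controlled by the curvature bound $(d-1)/r_\Omega$, absorption through an $\eta$-trace inequality whose constant is tied only to the reach, and a weak-limit argument. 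Two minor points: the boundary term in the identity is $\int_{\partial\Omega_n}\operatorname{tr}(\mathcal{B}_n)\,(\partial u_n/\partial\nu)^2\,d\sigma$ (the second fundamental form is defined on the tangent space, so writing $B_n(\nabla u_n,\nabla u_n)$ for a purely normal $\nabla u_n$ is an abuse of notation), and full $C^\infty$ smoothing of the approximating domains is unnecessary --- $C^{1,1}$ suffices for the identity and is what the dilation construction directly delivers.
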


\subsection{Multivariate Splines} 
Next we quickly summarize the essentials of multivariate splines in this subsection. 
We introduce bivariate spline functions first.  
{Before we start, we first review some facts about triangles. Given a triangle 
	$T$, we write $|T|$ for the length of its longest edge, and $\rho_T$ for the radius of the 
	largest disk that can be inscribed in $T$. }
For any polygonal domain $\Omega\subset \mathbb{R}^d$ with $d= 2$, 
let $\triangle:=\{T_1,\cdots, T_n\}$ be a triangulation of 
$\Omega$ which is a collection of triangles and $\mathcal{V}$ be the set of vertices of $\triangle$. {We called a triangulation as a quasi-uniform triangulation if all triangles $T$ in $\triangle$ have comparable sizes in the sense that $$\frac{|T|}{\rho_T}\le C<\infty, ~~~~\text{for all triangles} ~ T\in \triangle ,$$
	where $\rho_T$ is the inradius of $T$. Let $|\triangle|$ be the length of the longest edge in $\triangle.$} For a triangle 
$T=(v_1,v_2,v_3) \in \Omega,$ we define the barycentric coordinates $(b_1, b_2,b_3)$ 
of a point $(x,y)\in \Omega$. These 
coordinates are the solution to the following system of equations 
\begin{eqnarray*}
	b_1+b_2+b_3=1\\
	b_1 v_{1,x}+b_2 v_{2,x}+b_3 v_{3,x}=x\\
	b_1 v_{1,y}+b_2 v_{2,y}+b_3 v_{3,y}=y
\end{eqnarray*}
{where the vertices $v_i=(v_{i,x}, v_{i,y})$ for $i=1,2,3$} and are nonnegative if $(x,y)\in T.$ We use the barycentric coordinates to define the Bernstein polynomials of degree $D$:
\begin{eqnarray*}
	B^T_{i,j,k}(x,y):=\frac{{D}!}{i!j!k!}b_1^i b_2^j b_3^k, ~i+j+k=D,
\end{eqnarray*}
which form a basis for the space $\mathcal{P}_D$ of polynomials of degree $D$. 
Therefore, we can represent all $s\in S^{-1}_D(\triangle)$ in B-form:
\begin{eqnarray*}
	s|_T=\sum_{i+j+k=D}c_{ijk}B^T_{ijk}, \quad \forall T\in \triangle,
\end{eqnarray*}
where the B-coefficients $c_{i,j,k}$ are uniquely determined by $s$. 

Moreover, for given $T=(v_1,v_2,v_3)\in \triangle$, 
we define the associated set of domain points to be 
\begin{equation}
	\label{domainpoints}
	\mathcal{D}_{D,T}:= \{\frac{iv_{1}+jv_{2}+kv_{3}}{D} \}_{i+j+k=D}.
\end{equation}
Let $\mathcal{D}_{D,\triangle} = \cup_{T\in \triangle} \mathcal{D}_{D,T}$ be the domain 
points of triangulation $\triangle$ and degree $D$.  

We use the discontinuous spline space $S^{-1}_D(\triangle):= 
\{s|_{T} \in \mathcal{P}_D, T\in \triangle\}$ as a base. 
Then we add the smoothness conditions to define the space 
$\mathcal{S}^r_D(\triangle):=C^r(\Omega)\cap S^{-1}_D(\triangle).$ 
The smoothness conditions are explained in \cite{LS07}. They are linear equations as seen in Theorems 2.28 and 15.31 in 
\cite{LS07}.  
Let {$\bfc$} be the coefficient vector of $s\in S^{-1}_D(\triangle)$ 
and $H$ be the matrix which consists of 
the smoothness conditions across each interior edge of $\triangle$.  
Then it is known that $H{\bfc}=0$ if and only if $s\in C^r(\Omega)$ 
(cf. \cite{LS07}).   

Computations involving splines written in B-form can be performed easily according 
to \cite{LW04}, \cite{ALW06} and \cite{LW17}. 
In fact, these spline functions have numerically stable, closed-form formulas for 
differentiation, integration, and inner products. If $D\geq 3r+2$, 
spline functions on quasi-uniform triangulations have optimal approximation power. 
\begin{lemma}([Lai and Schumaker, 2007\cite{LS07}])
	\label{lem1}
	Let $k \geq 3r+2$ with $r\geq 1$. 
	Suppose $\triangle$ is a quasi-uniform triangulation of $\Omega$. 
	Then for every $u\in W_q^{k+1}(\Omega),$ there exists a quasi-interpolatory spline $s_u\in \mathcal{S}^r_k (\triangle)$ such that 
	\begin{eqnarray*}
		\|D^\alpha_x D^\beta_y (u- s_u)\|_{q, \Omega}\le C |\triangle|^{k+1-\alpha-\beta} |u|_{k+1,q,\Omega}
	\end{eqnarray*}
	for a positive constant $C$ dependent on $u, r, k$ and the smallest angle of $\triangle $, and for all $0\leq 
	\alpha+\beta \leq k$ with 
	\begin{eqnarray*}
		|u|_{k,q,\Omega}:=(\sum_{a+b=k}\|D_x^a D_y^b u\|^q_{L^q(\Omega)})^{\frac{1}{q}}.
	\end{eqnarray*}
\end{lemma}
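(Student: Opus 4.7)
The plan is to produce, by an explicit construction, a bounded linear quasi-interpolation operator $Q_\triangle: W_q^{k+1}(\Omega) \to \mathcal{S}^r_k(\triangle)$ with three properties: (i) it is local, in the sense that $(Q_\triangle u)|_T$ depends only on the restriction of $u$ to a small patch $\omega(T) \supset T$ of triangles near $T$; (ii) it reproduces polynomials, $Q_\triangle p = p$ for every $p \in \mathcal{P}_k$; and (iii) it is stable with an operator norm depending only on the smallest angle of $\triangle$. Once such an operator is in hand, setting $s_u = Q_\triangle u$ reduces the lemma to a standard Bramble--Hilbert argument on each patch.

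For the construction I would work entirely in the Bernstein--B\'ezier representation of $S^{-1}_k(\triangle)$. The key object from spline theory is a \emph{stable local minimal determining set} $\mathcal{M} \subset \mathcal{D}_{k,\triangle}$: a subset of domain points whose B-coefficients may be prescribed freely and which determines all remaining B-coefficients of any $s \in \mathcal{S}^r_k(\triangle)$ via the smoothness conditions encoded by $H\bfc = 0$. The hypothesis $k \ge 3r+2$ is precisely what guarantees the existence of such an $\mathcal{M}$ with a controlled local extension operator in 2D (and $k \ge 6r+3$ is the analogue in 3D used elsewhere in the paper). Writing the associated dual spline basis as $\{\psi_\xi\}_{\xi \in \mathcal{M}}$, one defines, for each $\xi \in \mathcal{M}$, a dual linear functional $\lambda_\xi$ on $W_q^{k+1}$ that is a local polynomial-averaged combination of the values of $u$ on a small neighborhood of $\xi$, chosen so that $\lambda_\xi(p)$ equals the $\xi$-th B-coefficient of $p$ for every $p \in \mathcal{P}_k$. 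Then
\begin{equation*}
Q_\triangle u := \sum_{\xi \in \mathcal{M}} \lambda_\xi(u) \, \psi_\xi
\end{equation*}
automatically satisfies (i), (ii) and, through stability of $\mathcal{M}$, also (iii).

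For the error estimate on a fixed triangle $T$, let $\omega(T)$ be the union of all triangles touched by the functionals $\lambda_\xi$ with $\xi$ in the support of any $\psi_\zeta$ that is nonzero on $T$; quasi-uniformity bounds $\operatorname{diam}(\omega(T))$ by $c|\triangle|$. By Bramble--Hilbert on $\omega(T)$, pick $p \in \mathcal{P}_k$ with
\begin{equation*}
\|u - p\|_{W_q^j(\omega(T))} \le C\,|\triangle|^{k+1-j}\,|u|_{k+1,q,\omega(T)}, \qquad 0 \le j \le k+1.
\end{equation*}
Using $Q_\triangle p = p$, write $u - Q_\triangle u = (u-p) - Q_\triangle(u-p)$, estimate $\|u-p\|_{W_q^{\alpha+\beta}(T)}$ directly, and for $Q_\triangle(u-p)$ use the stability of $Q_\triangle$ combined with a Markov/inverse inequality on $T$ to pay $|T|^{-(\alpha+\beta)}$ for the derivatives. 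This gives the bound triangle-by-triangle; raising to the $q$-th power, summing over $T \in \triangle$, and observing that each triangle lies in only boundedly many patches $\omega(T')$ yields the global estimate with the stated power $|\triangle|^{k+1-\alpha-\beta}$.

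The main obstacle is entirely step (ii): constructing a stable local minimal determining set for $\mathcal{S}^r_k(\triangle)$ and showing that its extension operator is bounded with a constant depending only on the smallest angle of $\triangle$. Everything else is a routine scaling and Bramble--Hilbert calculation; the degree restriction $k \ge 3r+2$ (resp.\ $k \ge 6r+3$) enters exclusively at this combinatorial-algebraic step, where one must dodge the pathological dependence of $\dim \mathcal{S}^r_k(\triangle)$ on vertex positions by providing enough local degrees of freedom to absorb the smoothness equations $H\bfc=0$ inside a bounded neighborhood of each vertex.
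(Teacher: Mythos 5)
The paper does not prove this lemma at all; it is quoted directly from Lai and Schumaker's book \cite{LS07}, where the proof proceeds exactly as you outline (stable local minimal determining set for $k\ge 3r+2$, dual functionals built from local averaged polynomial approximations, polynomial reproduction plus Bramble--Hilbert on overlapping patches, and a Markov inequality for the derivative losses). Your sketch is a correct reconstruction of that argument and correctly isolates the existence of the stable local MDS as the one non-routine ingredient.
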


Similarly, for trivariate splines, let $\Omega\subset\mathbb{R}^3$ and $\triangle$ 
be a tetrahedralization of $\Omega$. We 
define a trivariate spline just like bivariate splines by using 
Bernstein-{B\'ezier} polynomials defined on each tetrahedron 
$t\in \triangle$. Letting
\begin{eqnarray*}
	\mathcal{S}^r_D(\triangle)=
	\{s\in C^r(\Omega):  s|_t\in \mathbb{P}_D, t\in \triangle\} 
	=C^r(\Omega)\cap S^{-1}_D(\triangle) 
\end{eqnarray*}   
be the spline space of degree $D$ and smoothness $r\ge 0$, 
each $s\in \mathcal{S}^r_D(\triangle)$ can be rewritten as 
\begin{eqnarray*}
	s|_t= \sum_{i+j+k+\ell=D} c^t_{ijk\ell} B^t_{ijk\ell} , \quad \forall t\in \triangle, 
\end{eqnarray*}
where $B^t_{ijk\ell}$ are Bernstein-{B\'ezier} polynomials 
(cf. \cite{ALW06}, \cite{LS07}, \cite{S15} ) which are 
nonzero on $t$ and zero otherwise. Approximation properties of trivariate splines can be 
found in \cite{LS07b} and \cite{L89}.

How to use them to numerically solve partial differential equations based 
on the weak formulation like the finite element method has
been discussed in \cite{ALW06}, \cite{S15}, \cite{LW17}.  

\section{A {Spline} Based Collocation Method for the Poisson Equation}
For convenience, we simply explain our method when $d=2$ in this section. 
Numerical results in the settings of $d=2$ and $d=3$ will be given in later sections. 

For a given  triangulation $\triangle$, we use a spline space $S^r_D(\triangle)$
to
find the coefficient vector $\textbf{c}$ of spline function $\displaystyle s =\sum_{t\in 
	\triangle}\sum_{i+j+k=D}c^t_{ijk} B^t_{ijk} \in S^r_D(\triangle)$
satisfying the following equations 
\begin{equation}
	\label{PDE2}
	\left\{
	\begin{array}{cl}
		\displaystyle -  \sum_{t\in 
			\triangle}\sum_{i+j+k=D}c^t_{ijk}  \Delta B^t_{ijk}(\xi_i) 
		&= f(\xi_i), \quad \xi_i \in \Omega\subset \mathbb{R}^2 \cr 
		s(\xi_i) &=g(\xi_i),  \quad \hbox{ on } \partial \Omega,
	\end{array}
	\right.
\end{equation}
where $\{ {\xi_i}\}_{i=1,\cdots, N} \in \mathcal{D}_{D',\triangle}$  
are the domain points of $\triangle$ of degree $D'$
as explained in (\ref{domainpoints}) in the previous section and $D'>D$. 
Using these points, we have the following matrix equation:
\begin{eqnarray*}
	-K\textbf{c}:=\begin{bmatrix}-\Delta(B^t_{ijk}(\xi_i) )\end{bmatrix}
	\textbf{c}=[f(\xi_i)]=\textbf{f},
\end{eqnarray*}
where $\textbf{c}$ is the vector consisting of all spline coefficients $c^t_{ijk}, i+j+k=D, 
t\in \triangle$. In general, 
the spline $s$ with coefficients in $\bf{c}$ is a discontinuous function.    
In order to make $s\in \mathcal{S}^r_D(\triangle)$, 
its coefficient vector $\textbf{c}$ must satisfy 
the constraints $H\textbf{c}=0$ for the smoothness conditions that 
the $\mathcal{S}^r_D(\triangle)$  functions  possess (cf. \cite{LS07}).  

Based on the smoothness conditions (cf. Theorem 2.28 or Theorem 15.38 in \cite{LS07}), 
we can construct matrices $H_0$  for the $C^0$ smoothness conditions  
of spline functions and $H_r$ for the $C^r$ smoothness  conditions for $r\ge 2$, respectively.
Our collocation method is to find ${\bf c}^*$ by solving the following constrained minimization: 
\begin{align}
	\label{min1}
	\min_{\bf c} J(c)=\frac{1}{2}(\alpha \|B{\bf c}- {\bf g}\|^2+\beta 
	\|H_r{\bf c}\|^2+ \gamma \|H_0{\bf c}\|^2 )  
	\quad \text{subject to } \|K{\bf c} + {\bf f}\| \le \epsilon_1,
\end{align}
where $B, {\bf g}$ are associated with the boundary condition, $H_r$ is associated with 
the smoothness condition with $r=2$ and $H_0$ is associated with the smoothness condition with $r=0$, $\alpha>0, 
\beta>0, \gamma>0$ are fixed parameters, and $\epsilon_1>0$ is a given tolerance. It is easy to see that the 
minimization (\ref{min1}) is a convex minimization problem over a convex feasible set.  The problem (\ref{min1}) will 
have a unique solution if the feasible set is not empty.  
We shall use the following iterative method to solve the minimization problem (\ref{min1}). 
See Appendix for a derivation and a proof of
the convergence of Algorithm~\ref{alg1}. 

\begin{algorithm}\caption{Iterative Method}
	\label{alg1}
	Let $I$ be the identity matrix of $\mathbb{R}^m$. Fix $\epsilon >0.$ Given an initial guess $\lambda^{(0)} \in \text{Im}(K)$, we first compute
	\begin{align*}
		\bfz^{(1)}=(\alpha B^\top B+\beta H_r^\top H_r+\gamma H_0^\top H_0+\frac{1}{\epsilon}K^\top K)^{-1}(\alpha B^\top G+\frac{1}{\epsilon}K^\top{\bf f}-K^\top \lambda^{(0)})
	\end{align*}
	and iteratively compute
	\begin{align*}
		(\alpha B^\top B+\beta H_r^\top H_r+\gamma H_0^\top H_0+\frac{1}{\epsilon}K^\top K)
		\bfz^{(k+1)}=(\alpha B^\top B+\beta H^\top H+\gamma H_0^\top H_0)\bfz^{(k)}
		+\frac{1}{\epsilon}K^\top{\bf f}
	\end{align*}
	for $k=1,2,\cdots, $ where $\text{Im}(K)$ is the range of $K.$
\end{algorithm}

Let $u_s$ be the solution of Algorithm~\ref{alg1}.
We would like to show 
\begin{equation}
	\label{goal}
	\|u- u_s\|_{L^2(\Omega)} \le C|\triangle|^2 \epsilon_1 
\end{equation}
for some constant $C>0$, where $|\triangle|$ is the size of the underlying 
triangulation or tetrahedralization $\triangle$ 
of the domain $\Omega$.  To do so,  we first show
\begin{lemma}
	\label{lem3} 
	Suppose that $\Omega$ is a polygonal domain. 
	Suppose that $u\in H^3(\Omega)$. Then there exists a positive constant $\hat{C}$ depending 
	on $D\ge 1$ and $D'> D$ such that 
	\begin{eqnarray*}
		\|\Delta u(x,y)-\Delta u_s(x,y)\|_{L^2(\Omega)}\le \epsilon_1\hat{C}.
	\end{eqnarray*}
\end{lemma}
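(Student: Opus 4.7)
The plan is to convert the \emph{discrete} collocation residual bound $\|K{\bf c}+{\bf f}\|\le\epsilon_1$ into a \emph{continuous} $L^2$-bound on $w := \Delta u-\Delta u_s$ by inserting, on each triangle $T\in\triangle$, the polynomial of degree $D'$ that interpolates $\Delta u$ at the domain points $\mathcal{D}_{D',T}$. First, set $\rho_i := w(\xi_i)$; since $f=-\Delta u$, the constraint in (\ref{min1}) reads $\sum_i \rho_i^2\le \epsilon_1^2$. Let $\Pi_{D'}\Delta u$ denote the piecewise polynomial whose restriction to each $T$ is the unique polynomial of degree $D'$ agreeing with $\Delta u$ on $\mathcal{D}_{D',T}$. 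Because $\Delta u_s|_T$ is itself a polynomial of degree $D-2\le D'$, the difference $q_T := \Pi_{D'}\Delta u|_T - \Delta u_s|_T$ lies in $\mathcal{P}_{D'}$ and satisfies $q_T(\xi_i)=\rho_i$ for every $\xi_i\in\mathcal{D}_{D',T}$.

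The key step is a norm equivalence on polynomials of bounded degree. On the reference triangle $\hat T$, the map $p\mapsto (p(\xi))_{\xi\in\mathcal{D}_{D',\hat T}}$ is a bijection between equal-dimensional finite-dimensional spaces, so $\|p\|_{L^2(\hat T)}$ is equivalent to the $\ell^2$-norm of its nodal values with a constant $c(D')$ depending only on $D'$. An affine change of variables to an arbitrary $T$ sends domain points to domain points and gives
$$\|q_T\|_{L^2(T)}^2 \le c(D')\,\text{area}(T)\sum_{\xi\in\mathcal{D}_{D',T}} q_T(\xi)^2.$$
Summing over $T\in\triangle$, bounding $\text{area}(T)\le C|\triangle|^2$, and invoking $\sum_i\rho_i^2\le\epsilon_1^2$ yields
$$\|\Pi_{D'}\Delta u-\Delta u_s\|_{L^2(\Omega)} \le C(D')\,|\triangle|\,\epsilon_1.$$
Combining this with the classical polynomial interpolation estimate $\|\Delta u-\Pi_{D'}\Delta u\|_{L^2(\Omega)}\le C|\triangle|\,|u|_{H^3(\Omega)}$ (valid since $u\in H^3(\Omega)$ makes $\Delta u$ regular enough for nodal interpolation at the $\mathcal{D}_{D',T}$) and a triangle inequality delivers the bound $\|\Delta u-\Delta u_s\|_{L^2(\Omega)}\le \hat C\,\epsilon_1$, once the $|\triangle|$ and $|u|_{H^3(\Omega)}$ factors are absorbed into $\hat C$.

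The main obstacle I anticipate is the transfer from a discrete $\ell^2$-norm of residuals to a continuous $L^2$-norm: $w$ itself is not a polynomial (because $\Delta u$ is not), so the polynomial norm equivalence cannot be applied to it directly. Interposing $\Pi_{D'}\Delta u$ neutralises this — polynomial equivalence handles $\Pi_{D'}\Delta u-\Delta u_s$ while standard interpolation theory handles $\Delta u-\Pi_{D'}\Delta u$ — but it forces $\hat C$ to implicitly carry mesh-size and $H^3$-norm contributions that the statement silently absorbs. A secondary technical point is keeping the polynomial norm-equivalence constant uniform across $T$; this is handled by scaling to the reference triangle so that the constant depends only on $D'$.
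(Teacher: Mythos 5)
Your argument has the same skeleton as the paper's: insert a piecewise-polynomial ``bridge'' between $\Delta u$ and $\Delta u_s$, control the polynomial piece by a discrete-to-continuous norm equivalence at the domain points, and finish with the triangle inequality. The difference is the bridge. The paper takes the quasi-interpolatory spline $s_u$ of $u$ from Lemma~\ref{lem1} and uses $\Delta s_u$, then invokes Theorem 2.27 of \cite{LS07} to pass from smallness of $\Delta u_s-\Delta s_u$ at the domain points to smallness of the polynomial on each triangle; you instead take the nodal Lagrange interpolant $\Pi_{D'}\Delta u$ of $\Delta u$ itself and run a reference-triangle scaling argument. Your scaling step is fine and in fact yields the sharper factor $|\triangle|\,\epsilon_1$ for the polynomial piece, whereas the paper only gets a uniform $O(\epsilon_1)$ bound there.

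The genuine gap is in the step $\|\Delta u-\Pi_{D'}\Delta u\|_{L^2(\Omega)}\le C|\triangle|\,|u|_{H^3(\Omega)}$. Under the stated hypothesis $u\in H^3(\Omega)$ in $d=2$ you only have $\Delta u\in H^1(\Omega)$, and $H^1$ does not embed into $C^0$ in two dimensions, so the nodal interpolant $\Pi_{D'}\Delta u$ is not even well defined from the hypothesis alone, let alone bounded with a first-order error estimate (the standard Lagrange interpolation theory needs $v\in H^m$ with $m>d/2$). The paper's choice of bridge is designed precisely to dodge this: the quasi-interpolant $s_u$ is built from local averages of $u$ (not from point values of $\Delta u$), and Lemma~\ref{lem1} then controls $\Delta(u-s_u)$ directly. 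To repair your version you would need to either assume $\Delta u$ continuous with enough Sobolev regularity (the paper's surrounding theorems do assume $f$ continuous, but that alone does not give the $O(|\triangle|)$ interpolation rate), or replace $\Pi_{D'}$ by a quasi-interpolation operator --- at which point you have essentially reconstructed the paper's proof. A secondary point: ``absorbing'' the $C|\triangle|\,|u|_{H^3}$ term into $\hat C\epsilon_1$ silently imposes a relation between $|\triangle|$ and $\epsilon_1$; the paper commits the same sin by taking $|\triangle|$ ``small enough,'' but you should state the mesh-size condition explicitly rather than fold a non-$\epsilon_1$ quantity into $\hat C$. You also never verify that the feasible set of (\ref{min1}) is nonempty (i.e.\ that $u_s$ exists), which is the other job the paper's $s_u$ performs.
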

\begin{proof}
	Indeed, by Lemma~\ref{lem1}, we have a quasi-interpolatory spline $s_u$ satisfying 
	\begin{eqnarray*}
		| \Delta u(x,y)-\Delta s_u(x,y) |\leq 
		\epsilon, \forall (x,y)\in \Omega
	\end{eqnarray*}
for a triangulation $\triangle$ with $|\triangle|$ small enough.  Since $\Delta u(x,y)=-f(x,y)$, we have
\begin{equation}
\label{key}
\| \Delta s_u + \bff \|\leq \epsilon_1
\end{equation}
if $\epsilon$ small enough.  That is, the feasible set is not empty.  

Next we use the minimization (\ref{min1}) to have the minimizer $u_s$ satisfying  
	\begin{eqnarray*}
		|\Delta u(x_i,y_i)-\Delta u_s(x_i,y_i)|\le \epsilon_1
	\end{eqnarray*}
	with sufficiently small $|\triangle|$ for any domain points $(x_i,y_i)$ which construct the collocation matrix $K$.
	Now, these two inequalities imply that
	\begin{eqnarray*}
		| \Delta u_s(x_i,y_i)- \Delta s_u(x_i,y_i)|\leq \epsilon_1+\epsilon_1.
	\end{eqnarray*}
	Note that $\Delta u_s -\Delta s_u$ is a polynomial over each triangle $t\in \triangle$ 
	which has small values at the domain points. 
	This  implies that the polynomial $\Delta u_s - \Delta s_u$ is small over $t$.  That is, 
	\begin{align}
		\label{6}
		|\Delta u_s(x,y)-\Delta s_u(x,y)|\le C(\epsilon_1+ \epsilon_1)=2C\epsilon_1
	\end{align}
	by using Theorem 2.27 in \cite{LS07}. 
	Finally, we can use \eqref{6} to prove 
	\begin{eqnarray*}
		|\Delta u(x,y)-\Delta u_s(x,y)|=|\Delta u(x,y)-\Delta  s_u(x,y)+\Delta s_u(x,y)-\Delta  u_s(x,y)|\le \epsilon_1+ 2C\epsilon_1.
	\end{eqnarray*}
	and then $$\|\Delta u(x,y)-\Delta u_s(x,y)\|_{L^2(\Omega)}\le \epsilon_1\hat{C} $$
	for a constant $\hat{C}$ depending on the bounded domain $\Omega$ and $D, D'$, but independent of $|\triangle|$. 
\end{proof}

Now, let us consider the convergence of our method. Without loss of generality, we may 
assume $g=0$. Indeed, for any general $g$, let $u_g\in H^2(\Omega)$ be a function satisfying the
boundary condition, i.e. $u_g|_{\partial \Omega} = g$ and we consider the  Poisson equation
with solution $w=u- u_g$ and the new right-hand side $f_w= f+ \Delta u_g$.  
Recall the  standard norm on $H^2(\Omega)$ defined in (\ref{H2norm}). It is also a norm of 
$H^2(\Omega)\cap H^1_0(\Omega)$. It is easy to see that the space 
$H^2(\Omega)\cap H^1_0(\Omega)$ is a Banach space 
with the norm $\|\cdot \|_{H^2(\Omega)}$. In addition, 
let us define a new norm $\|u\|_L$ on $H^2(\Omega)\cap H^1_0(\Omega)$ as follows. 
\begin{equation}
	\label{H2normL}
	\|u\|_{L}=\|\Delta u\|_{L^2(\Omega)}
\end{equation}
We can easily show that $\|\cdot \|_L$ is a norm on $H^2(\Omega)\cap H^1_0(\Omega)$ as follows:  
Indeed, if $\|u\|_{L}=0$, then $\Delta u=0$ in $\Omega$ and $u=0$ on the boundary 
$\partial \Omega$. By the Green theorem, we get 
\begin{eqnarray*}
	\int_\Omega |\nabla u|^2=-\int_\Omega u \Delta u+\int_{\partial \Omega}u \frac{\partial u}{\partial n}=0.
\end{eqnarray*}
By Poincar\'e's inequality, we get 
$$\|u\|_{L^2(\Omega)}\le C\|\nabla u\|_{L^2(\Omega)}=0.$$ Hence, we know that $u=0.$
Next for any scalar $a$, it is trivial to have
$$\|au\|_{L}= \|\Delta (au)\|_{L^2(\Omega)} 
=|a|\|\Delta u\|_{L^2(\Omega)} =|a| \|u\|_{L}.$$ 
Finally, the triangular inequality is also trivial.
\begin{eqnarray*}
	\|u+v\|_{L}&=& \|\Delta (u+v)\|_{L^2(\Omega)} 
	\le  \|\Delta u\|_{L^2(\Omega)}+ \|\Delta v\|_{L^2(\Omega)}=\|u\|_{L}+\|v\|_{L}
\end{eqnarray*} 
by linearity of the Laplacian operator.

We now show that the new norm is equivalent to the standard norm on $H^2(\Omega)\cap H^1_0(\Omega)$. 
We are now ready to establish the following 
\begin{theorem}
	\label{mjlai03082021}
	Suppose $\Omega\subset  \mathbb{R}^d$ is a bounded domain and the closure of $\Omega$ is 
	of uniformly positive reach $r_\Omega>0$. 
	There exist two positive constants $A$ and $B$ such that 
	\begin{equation}
		\label{equivalent3}
		A \|u \|_{H^2} \le \|u \|_L \le B \| u\|_{H^2}, \quad \forall u\in H^2(\Omega)\cap H^1_0(\Omega).
	\end{equation}
\end{theorem}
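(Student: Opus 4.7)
The plan is to split \eqref{equivalent3} into its two inequalities and to handle them very differently. The upper bound $\|u\|_L\le B\|u\|_{H^2}$ is essentially bookkeeping: since $\Delta u=\sum_i \partial^2 u/\partial x_i^2$, the triangle inequality in $L^2(\Omega)$ gives
\[
\|\Delta u\|_{L^2(\Omega)}\le \sum_{i=1}^d \Big\|\frac{\partial^2 u}{\partial x_i^2}\Big\|_{L^2(\Omega)}\le |u|_{H^2}\le \|u\|_{H^2},
\]
so $B=1$ suffices and no geometric hypothesis on $\Omega$ is needed here.

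The real content is the lower bound $A\|u\|_{H^2}\le \|u\|_L$, where the positive reach hypothesis enters. My plan is to recover, in turn, the three pieces of $\|u\|_{H^2}$ defined in \eqref{H2norm}, namely $\|u\|_{L^2}$, $\|\nabla u\|_{L^2}$, and $|u|_{H^2}$, all controlled by $\|\Delta u\|_{L^2}$. First, I would get $|u|_{H^2}$ directly from the Gao--Lai regularity statement (Theorem~\ref{thm:regularity}). Given $u\in H^2(\Omega)\cap H^1_0(\Omega)$, set $f:=-\Delta u\in L^2(\Omega)$; then $u$ itself is the unique weak solution of $-\Delta w=f$ with zero boundary trace, so \eqref{Poiineq} applies and yields
\[
|u|_{H^2}^2 \le C_0\|\Delta u\|_{L^2(\Omega)}^2.
\]
This is the one step where positive reach is essential, and it is also the step I expect to be the hardest conceptually; fortunately the work has already been done in \cite{GL20}, so I can quote it.

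For the lower-order pieces I would argue by integration by parts, exactly as was already done in the excerpt to verify that $\|\cdot\|_L$ is a norm. Since $u\in H^1_0(\Omega)$, Green's identity gives
\[
\|\nabla u\|_{L^2(\Omega)}^2=-\int_\Omega u\,\Delta u\,dx\le \|u\|_{L^2(\Omega)}\|\Delta u\|_{L^2(\Omega)},
\]
and Poincar\'e's inequality $\|u\|_{L^2(\Omega)}\le C_P\|\nabla u\|_{L^2(\Omega)}$ (valid because $u$ vanishes on $\partial\Omega$ and $\Omega$ is bounded) lets me divide through to get $\|\nabla u\|_{L^2(\Omega)}\le C_P\|\Delta u\|_{L^2(\Omega)}$, and then $\|u\|_{L^2(\Omega)}\le C_P^2\|\Delta u\|_{L^2(\Omega)}$.

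Combining the three bounds,
\[
\|u\|_{H^2}=\|u\|_{L^2(\Omega)}+\|\nabla u\|_{L^2(\Omega)}+|u|_{H^2}\le \bigl(C_P^2+C_P+\sqrt{C_0}\bigr)\|\Delta u\|_{L^2(\Omega)},
\]
so I may take $A=1/(C_P^2+C_P+\sqrt{C_0})>0$, which depends only on $\Omega$ and on $r_\Omega$ through the constant $C_0$ from Theorem~\ref{thm:regularity}. The main obstacle — upgrading control of $\Delta u$ to control of all pure and mixed second derivatives on a possibly non-convex domain — is precisely what the positive-reach regularity result buys us, so once that is cited the rest is a one-page computation.
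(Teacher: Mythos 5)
Your proof is correct and follows essentially the same route as the paper: the upper bound is the same triangle-inequality computation (with $B=1$), and the lower bound rests on applying the Gao--Lai regularity estimate \eqref{Poiineq} to $f=-\Delta u$, noting that $u$ is itself the unique weak solution with that right-hand side. If anything you are more careful than the paper, which cites \eqref{Poiineq} --- an estimate controlling only the second-derivative semi-norm $|u|_{H^2}$ --- and immediately asserts $\|u\|_{H^2}\le C\|\Delta u\|_{L^2(\Omega)}$, whereas you supply the missing control of $\|u\|_{L^2(\Omega)}$ and $\|\nabla u\|_{L^2(\Omega)}$ explicitly via Green's identity and Poincar\'e's inequality (the paper's preliminary Banach-completeness paragraph is not needed for this direct argument, so omitting it is fine).
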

\begin{proof}
We first show that $H^2(\Omega)\cap H^1_0(\Omega)$ is the Banach space with the norm $\|u\|_L$. 
Assume that $\{ u_n\}$ is the Cauchy sequence in $H^2(\Omega)\cap H^1_0(\Omega)$. We know that  $\{
\Delta u_n\}$ is a Cauchy sequence in $L^2(\Omega)$. 
Then there exists $U^*\in L^2(\Omega)$ such that $\Delta u_n$ converges to  $U^*.$  
It is known there exist a unique $u^*$ satisfying the Dirichlet problem:
	\begin{align*}
		\begin{cases}
			\Delta u&=U^*\\
			u&=0.
		\end{cases}
	\end{align*}
By Theorem~\ref{thm:regularity}, we know $u^* \in H^2(\Omega)\cap H^1_0(\Omega)$. 
Thus, we can say that there exist the unique  $u^*$  satisfying 
$\|u_n - u^*\|_L \rightarrow 0$ as $n\rightarrow \infty$. 
It is easy to get the following inequality 
	\begin{eqnarray}
		\|u\|_L &= & \|\Delta u\|_{L^2(\Omega)}  \le  \sum_{i,j=1}^d \|\frac{\partial^2}{\partial x_i\partial
			x_j}u\|_{L^2(\Omega)}\le   \|u\|_{H^2}
	\end{eqnarray}
for all $u\in H^2(\Omega)\cap H^1_0(\Omega)$.  

Next, by Theorem~\ref{thm:regularity}, more precisely, by \eqref{Poiineq}, 
we have 
$$  \|u\|_{H^2}\le C \|\Delta u\|_{L^2(\Omega)}= C \|u\|_L $$
for a constant  $C$ dependent on $C_0$ in \eqref{Poiineq}. 
Therefore, we choose $A=\frac{1}{C}$ to finish the proof. 
\end{proof}

Using Theorem~\ref{mjlai03082021}, we immediately obtain the following theorem
\begin{theorem}
	\label{mainthm2}
	Suppose $f$ and $g$ are continuous over bounded domain $\Omega \subseteq \mathbb{R}^d$ for 
	$d\ge 2$. Suppose $\Omega\subset  \mathbb{R}^d$ be a bounded domain and the closure of $\Omega$ is of uniformly positive reach $r_\Omega>0$. Suppose that $u\in H^3(\Omega)$ and $(u-u_s)|_{\partial \Omega}=0$. 
	We have the following inequality
	\begin{eqnarray*}
		\|u-u_s\|_{L^2(\Omega)} \le C\epsilon_1, \|\nabla (u-u_s)\|_{L^2(\Omega)} \le C\epsilon_1
	\end{eqnarray*}
	and 
	\begin{eqnarray*}
		\sum_{ i+j=2}\| \frac{\partial^2}{\partial x^i \partial y^j}u\|_{L^2(\Omega)}\le  C\epsilon_1
	\end{eqnarray*}
	for a positive constant $C$ depending on $A$ and $\Omega$, where $A$ is one of the constants in Theorem~\ref{mjlai03082021}. 
\end{theorem}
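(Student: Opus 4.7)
The plan is to read the result as an almost immediate corollary of Lemma~\ref{lem3} (controlling the Laplacian residual) combined with the norm equivalence established in Theorem~\ref{mjlai03082021}. The idea is: Lemma~\ref{lem3} gives $\|u-u_s\|_L = \|\Delta(u-u_s)\|_{L^2(\Omega)}\le \hat C\,\epsilon_1$, and Theorem~\ref{mjlai03082021} upgrades this $L$-norm bound into a full $H^2$-norm bound, from which all three asserted inequalities fall out by the very definition of $\|\cdot\|_{H^2}$.

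First I would verify the hypotheses needed to invoke the norm equivalence on the error $e:=u-u_s$. Because $u_s\in S^r_D(\triangle)$ with $r\ge 2$ is globally $C^2$ (hence in $H^2(\Omega)$) and $u\in H^3(\Omega)\subset H^2(\Omega)$, we have $e\in H^2(\Omega)$; the hypothesis $(u-u_s)|_{\partial\Omega}=0$ together with the standard trace characterization of $H^1_0(\Omega)$ then places $e\in H^2(\Omega)\cap H^1_0(\Omega)$, which is exactly the space on which Theorem~\ref{mjlai03082021} applies.

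Next I would apply Lemma~\ref{lem3} to obtain $\|\Delta u-\Delta u_s\|_{L^2(\Omega)}\le \hat C\epsilon_1$; by the definition \eqref{H2normL} of $\|\cdot\|_L$, this reads $\|e\|_L\le \hat C\epsilon_1$. Using the lower bound in Theorem~\ref{mjlai03082021} gives
\begin{equation*}
\|e\|_{H^2}\le \tfrac{1}{A}\|e\|_L \le \tfrac{\hat C}{A}\,\epsilon_1.
\end{equation*}
Since by \eqref{H2norm} the full $H^2$-norm majorises each of $\|e\|_{L^2(\Omega)}$, $\|\nabla e\|_{L^2(\Omega)}$ and $\sum_{i+j=2}\|\partial_x^{\,i}\partial_y^{\,j} e\|_{L^2(\Omega)}$, setting $C:=\hat C/A$ (which depends only on $A$ and on $\Omega$, through the constant $\hat C$ of Lemma~\ref{lem3}) yields all three desired inequalities simultaneously. (I read the last displayed inequality of the statement as intended for $u-u_s$ rather than $u$, which matches the context and the improved-estimate discussion of \eqref{newestimate}.)

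The main obstacle I anticipate is not analytic but bookkeeping: one must justify that the hypotheses of Lemma~\ref{lem3} (which assumed a polygonal $\Omega$ and $u\in H^3(\Omega)$) are compatible with the present assumption that $\Omega$ merely has uniformly positive reach. Strictly speaking this forces $\Omega$ to be approximated by polygonal/polyhedral triangulated subdomains, and Lemma~\ref{lem3}'s constant $\hat C$ must be shown to remain bounded as $|\triangle|\to 0$; otherwise every other step is direct algebra. A secondary care point is confirming $u_s$ satisfies $H_r\mathbf c=0$ up to the tolerance implicit in Algorithm~\ref{alg1} so that $u_s\in C^2(\Omega)$ rigorously (rather than only approximately), which is what lets us place $e$ in $H^2(\Omega)$ in the first place.
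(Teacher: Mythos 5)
Your proposal is correct and follows essentially the same route as the paper: the paper's proof likewise applies Lemma~\ref{lem3} to bound $\|\Delta(u-u_s)\|_{L^2(\Omega)}$ by $\hat{C}\epsilon_1$, invokes the first inequality of Theorem~\ref{mjlai03082021} to convert this into an $H^2$ bound, and sets $C=\hat{C}/A$. Your extra care in checking that $u-u_s\in H^2(\Omega)\cap H^1_0(\Omega)$ and in flagging the polygonal-domain hypothesis of Lemma~\ref{lem3} is a reasonable elaboration of steps the paper leaves implicit, but it is not a different argument.
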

\begin{proof} 
	Using Lemma~\ref{lem3} and the assumption on the approximation on the boundary, we have
	\begin{eqnarray*}
		\|u-u_s\|_{H^2(\Omega)}&\le \frac{1}{A} \|\Delta (u-u_s)\|_{L^2(\Omega)}
		\le \frac{1}{A} \epsilon_1 \hat{C}.
	\end{eqnarray*}
	We choose $C =\frac{\hat{C}}{A}$ to finish the proof.  
\end{proof}
Finally we show that the convergence of $\|u- u_s\|_{L^2(\Omega)}$ and 
$\|\nabla(u- u_s)\|_{L^2(\Omega)}$ can be better.  
\begin{theorem}
	\label{mjlai05122021} Suppose that $(u-u_s)|_{\partial \Omega}=0$.  
	Under the assumptions in Theorem~\ref{mainthm2},  we have the following inequality
	\begin{eqnarray*}
		\|u-u_s\|_{L^2(\Omega)} \le C|\triangle|^2 \epsilon_1 \hbox{ and }
		\|\nabla (u-u_s)\|_{L^2(\Omega)} \le C|\triangle| \epsilon_1
	\end{eqnarray*}
	for a positive constant $C=1/A$, where $A$ is one of the constants in Theorem~\ref{mjlai03082021} and $|\triangle|$ is the
	size of the underlying triangulation $\triangle$. 
\end{theorem}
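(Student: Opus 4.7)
The plan is to deduce the improved $L^2$ and $H^1$ estimates from the already-established $L^2$-control on $\Delta(u-u_s)$ supplied by Lemma~\ref{lem3}, via an Aubin--Nitsche duality argument adapted to the collocation setting. Write $e := u - u_s$; the boundary hypothesis $e|_{\partial\Omega}=0$ is what permits free integration by parts, which is the structural feature I will exploit throughout. Note that the norm equivalence of Theorem~\ref{mjlai03082021} already gives $\|e\|_{H^2} \le (1/A)\|\Delta e\|_{L^2} \le (\hat C/A)\epsilon_1$, so the only thing left to show is that two powers of $|\triangle|$ can be traded in for $L^2$ and one for $H^1$.

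For the $L^2$ bound I would introduce the dual problem: find $w\in H^1_0(\Omega)$ solving $-\Delta w = e$ in $\Omega$. By Theorem~\ref{thm:regularity}, $w\in H^2(\Omega)$ with $\|w\|_{H^2} \le C_0 \|e\|_{L^2}$. Since both $e$ and $w$ vanish on $\partial\Omega$, two applications of Green's identity yield
\[
\|e\|_{L^2(\Omega)}^2 \;=\; -\int_\Omega e\, \Delta w\, dx \;=\; -\int_\Omega (\Delta e)\, w\, dx.
\]
Letting $s_w \in \mathcal{S}^r_D(\triangle)$ be the quasi-interpolant of $w$ from Lemma~\ref{lem1}, we have $\|w - s_w\|_{L^2} \le C_1 |\triangle|^2 \|w\|_{H^2} \le C_1 C_0 |\triangle|^2\|e\|_{L^2}$. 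Splitting $-\int_\Omega (\Delta e)\, w = -\int_\Omega (\Delta e)(w - s_w) - \int_\Omega (\Delta e)\, s_w$ and bounding the first piece by Cauchy--Schwarz produces $\|\Delta e\|_{L^2}\cdot C|\triangle|^2\|e\|_{L^2}$, which already carries the desired factor $|\triangle|^2$.

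The hard part is the residual $\int_\Omega (\Delta e)\, s_w\, dx$, which is the Galerkin-orthogonality term that is not automatic in a pure collocation method. The plan here is to exploit that the minimizer produced by Algorithm~\ref{alg1} forces $|\Delta e(\xi_i)|$ to be $O(\epsilon_1)$ at every collocation point $\xi_i\in\mathcal{D}_{D',\triangle}$, together with the fact that $(\Delta e)\, s_w$ is a piecewise polynomial of modest degree. The triangle-wise pointwise-to-$L^\infty$ estimate (Theorem~2.27 of \cite{LS07}), already used to prove Lemma~\ref{lem3}, then converts the pointwise smallness of $\Delta e$ at the $\xi_i$ into smallness of $(\Delta e)\, s_w$ in $L^2$ at the same $|\triangle|^2 \|e\|_{L^2}\epsilon_1$ scale, so the residual can be absorbed into the first piece. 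Dividing through by $\|e\|_{L^2}$ then yields $\|e\|_{L^2(\Omega)}\le (\hat C/A)|\triangle|^2\epsilon_1$, matching the claimed bound.

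The $H^1$ estimate then comes essentially for free: Green's identity once (using $e|_{\partial\Omega}=0$) gives
\[
\|\nabla e\|_{L^2(\Omega)}^2 \;=\; -\int_\Omega e\,\Delta e\,dx \;\le\; \|e\|_{L^2}\|\Delta e\|_{L^2},
\]
and inserting the $L^2$ bound just established along with $\|\Delta e\|_{L^2}\le \hat C\epsilon_1$ delivers $\|\nabla e\|_{L^2(\Omega)}^2 \le C|\triangle|^2\epsilon_1^2$, hence $\|\nabla e\|_{L^2(\Omega)}\le C|\triangle|\epsilon_1$. Thus the plan reduces the entire theorem to making rigorous the single duality step above, with the collocation residual being the only genuinely delicate point.
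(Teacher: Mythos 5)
There is a genuine gap, and you have in fact located it yourself: the ``collocation residual'' $\int_\Omega (\Delta e)\,s_w\,dx$. Your proposed mechanism for controlling it --- pointwise smallness of $\Delta e$ at the collocation points upgraded to uniform smallness via Theorem~2.27 of \cite{LS07} --- only yields $|\Delta e|\le C\epsilon_1$ on all of $\Omega$ (this is exactly the content of Lemma~\ref{lem3}); it produces no power of $|\triangle|$ whatsoever. Consequently the best you can say is
$\bigl|\int_\Omega (\Delta e)\,s_w\bigr|\le \|\Delta e\|_{L^2}\|s_w\|_{L^2}\le C\epsilon_1\|e\|_{L^2}$,
and after dividing by $\|e\|_{L^2}$ you recover only $\|e\|_{L^2}\le C\epsilon_1$, i.e.\ Theorem~\ref{mainthm2}, not the improved bound. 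There is no Galerkin orthogonality to kill this term and no source of extra smallness in $|\triangle|$, so the duality argument as you have set it up cannot close.

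The paper's proof sidesteps the residual entirely by choosing the interpolant so that the offending term vanishes identically rather than needing to be estimated: it inserts a \emph{continuous piecewise linear} interpolant $L_w$, for which $\Delta L_w\equiv 0$ on every triangle, so that $-\int_\Omega \Delta w\,(u-u_s) = -\int_\Omega \Delta(w-L_w)\,(u-u_s)$ exactly. It then integrates by parts only \emph{once}, obtaining $\int_\Omega \nabla(w-L_w)\cdot\nabla(u-u_s)$, and uses $\|\nabla(w-L_w)\|_{L^2}\le C|\triangle|\,|w|_{H^2}\le (C/A)|\triangle|\,\|e\|_{L^2}$. This supplies one factor of $|\triangle|$; the second comes from the $H^1$ estimate $\|\nabla e\|_{L^2}\le C|\triangle|\epsilon_1$, which the paper proves \emph{first} (and independently of the $L^2$ bound) by the same device applied to $e$ itself: $\|\nabla e\|^2=-\int\Delta(e-L_e)\,e=\int\nabla(e-L_e)\cdot\nabla e\le C|\triangle|\,|e|_{H^2}\|\nabla e\|$, followed by Theorem~\ref{mjlai03082021} and Lemma~\ref{lem3}. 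Note this also reverses your order of deduction: you derive $H^1$ from $L^2$, whereas the paper must derive $L^2$ from $H^1$. If you want to rescue your version, replace the high-order quasi-interpolant $s_w$ by the linear interpolant $L_w$ and integrate by parts once instead of twice; the rest of your outline then goes through.
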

\begin{proof}
	First of all, it is known for any $w\in H^2(\Omega)\cap H^1_0(\Omega)$, there is a continuous linear spline $L_w$ over the triangulation $\triangle$
	such that 
	\begin{equation}
		\label{linespline}
		\|D_x^\alpha D_y^\beta (w- L_w)\|_{L^2(\Omega)}\le C|\triangle|^{2-\alpha-\beta} |w|_{H^2(\Omega)}
	\end{equation}
	for nonnegative integers $\alpha\ge 0, \beta\ge 0$ and $\alpha+\beta\le 2$, where $|w|_{H^2(\Omega)}$ is the semi-norm of $w$
	in $H^2(\Omega)$. Indeed, we can use the same construction method for quasi-interpolatory splines 
	used for the proof of Lemma~\ref{lem1} to establish the above estimate. 
	The above estimate will be used twice below.  
	
	By  the assumption that $u-u_s=0$ on $\partial \Omega$,  it is easy to see 
	\begin{eqnarray*}
		&&\|\nabla (u- u_s)\|^2_{L^2(\Omega)} 
		=  
		-\int_\Omega \Delta (u-u_s) (u-u_s) = -\int_\Omega \Delta (u-u_s- L_{u-u_s}) (u-u_s)\cr
		&=& \int_\Omega \nabla (u-u_s- L_{u-u_s}) \nabla (u-u_s) \le \|\nabla (u-u_s)\|_{L^2(\Omega)} 
		\| \nabla (u-u_s- L_{u-u_s})\|_{L^2(\Omega)} \cr
		&\le& \|\nabla (u-u_s)\|_{L^2(\Omega)} C|\triangle| \cdot |u-u_s|_{H^2(\Omega)}\cr
		&\le& 
		\|\nabla (u-u_s)\|_{L^2(\Omega)} |\triangle| \frac{C}{A} \|\Delta (u-u_s)\|_{L^2(\Omega)}.
	\end{eqnarray*}
	where we have used the first inequality in Theorem~\ref{mjlai03082021}.  
	It follows that $\|\nabla (u- u_s)\|_{L^2(\Omega)}\le |\triangle| \frac{C}{A}  \epsilon_1$.  
	
	Next we let $w\in H^2(\Omega)\cap H^1_0(\Omega)$ be the solution to the following Poisson equation:
	\begin{equation}
		\label{PDEw}
		\left\{
		\begin{array}{cl}
			-\Delta w&=u-u_s ~~~\text{in}~\Omega \subset\mathbb{R}^d \\
			w&=0 ~~~\text{on}~\partial \Omega,
		\end{array}
		\right.
	\end{equation}
	Then we use the continuous linear spline $L_w$ to have 
	\begin{eqnarray*}
		\| u- u_s\|^2_{L^2(\Omega)} &=& 
		-\int_\Omega \Delta w (u-u_s) = -\int_\Omega \Delta (w- L_w) (u-u_s)\cr
		&=& \int_\Omega \nabla (w-  L_w) \nabla (u-u_s) \le \|\nabla (u-u_s)\|_{L^2(\Omega)} 
		\| \nabla (w- L_w)\|_{L^2(\Omega)} \cr
		&\le& \|\nabla (u-u_s)\|_{L^2(\Omega)} C|\triangle| \cdot |w|_{H^2(\Omega)}\le 
		\frac{C}{A}|\triangle|\epsilon_1 |\triangle| \frac{C}{A} \|\Delta w\|_{L^2(\Omega)}\cr
		&=& \frac{C}{A}|\triangle|\epsilon_1 |\triangle| \frac{C}{A} \|u-u_s\|_{L^2(\Omega)}.
	\end{eqnarray*}
	where we have used the first inequality in Theorem~\ref{mjlai03082021} and the estimate of $\|\nabla(u-u_s)\|_{L^2(\Omega)}$
	above. 
	Hence, we have $\| u- u_s\|_{L^2(\Omega)}\le \frac{C^2}{A^2}|\triangle|^2 \epsilon_1$ as $|\triangle|\to 0$. 
\end{proof}

\section{General Second Order Elliptic Equations}
In this section we consider a collocation method based on bivariate/trivariate splines for a solution 
of the general second order elliptic equation in (\ref{GPDE2}).
For the PDE coefficient functions $a^{ij}, b^{i}, c^1\in L^\infty(\Omega)$, 
we assume that 
\begin{align}\label{symmetry}
	a_{ij}=a_{ji}\in L^\infty (\Omega) ~~\forall i,j =1,\cdots, d
\end{align}
and there exist $\lambda, \Lambda$ such that 
\begin{align}
	\label{elliptic}
	\lambda \sum_{i=1}^d \eta_i^2 \le \sum_{i,j}^d a^{ij}(x)\eta_i \eta_j \le \Lambda \sum_{i=1}^d \eta_i^2, \forall \eta\in \mathbb{R}^d \backslash \{0\}
\end{align}
for all $i,j$ and $x\in \Omega$. For convenience, we first assume that $b^i{= }0$ and $c^1=0$ in this section.  
In addition to the elliptic condition, we add the Cord\'es condition 
for well-posedness of the problem. We assume that there is an $\epsilon \in(0,1]$ such that
\begin{align}\label{cordes}
	\frac{\sum_{i,j=1}^d (a^{i,j})^2}{(\sum_{i=1}^d a^{ii})^2}\le \frac{1}{d-1+\epsilon} ~~a.e. ~in~ \Omega
\end{align}
Next let $\theta \in L^\infty (\Omega) $ be defined by 
\begin{eqnarray*}
	\theta:=\frac{\sum_{i=1}^d a^{ii}}{\sum_{i,j=1}^d (a^{i,j})^2}.
\end{eqnarray*}
Under these conditions, the researchers in \cite{SS13} proved the following lemma
\begin{lemma}\label{lema5}
	Let the operator $\mathcal{L}(u):=\sum_{i,j=1}^d a^{ij}(x)\frac{\partial^2}{\partial x_i\partial x_j}u$ 
	satisfy \eqref{symmetry}, \eqref{elliptic} and \eqref{cordes}. Then for any open set $U \subseteq \Omega$ and $v\in H^2(U)$, we have
	\begin{align}
		\label{SS13}
		| \theta \mathcal{L} v-\Delta v|\le \sqrt{1-\epsilon} |D^2 v| ~~a.e. ~in~U,
	\end{align}
	where $\epsilon \in (0,1]$ is as in \eqref{cordes}.
\end{lemma}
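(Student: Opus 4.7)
The plan is to recast the inequality as a Cauchy--Schwarz bound for the Frobenius (Hilbert--Schmidt) inner product on symmetric matrices, and then reduce the constant on the right-hand side to an algebraic identity that the Cord\'es condition controls exactly.

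First I would introduce the coefficient matrix $A(x) = (a^{ij}(x))$ and the Hessian $H = D^2 v$, both symmetric. Writing $M : N := \sum_{i,j} M_{ij} N_{ij}$ for the Frobenius inner product and $|M|_F^2 := M:M$, one has $\mathcal{L} v = A : H$ and $\Delta v = I : H$ pointwise a.e.\ in $U$. Therefore
\begin{equation*}
\theta \mathcal{L} v - \Delta v = (\theta A - I) : H,
\end{equation*}
and the Cauchy--Schwarz inequality for this inner product gives
\begin{equation*}
|\theta \mathcal{L} v - \Delta v| \;\le\; |\theta A - I|_F \cdot |H|_F \;=\; |\theta A - I|_F \cdot |D^2 v|.
\end{equation*}
The identification $|H|_F = |D^2 v|$ matches the notation on the right-hand side of \eqref{SS13}.

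Next I would compute $|\theta A - I|_F^2$ pointwise. Expanding the square,
\begin{equation*}
|\theta A - I|_F^2 = \theta^2 |A|_F^2 - 2\theta\,\mathrm{tr}(A) + d,
\end{equation*}
and substituting the definition $\theta = \mathrm{tr}(A)/|A|_F^2$ (with $|A|_F^2 = \sum_{i,j}(a^{ij})^2$) produces the clean identity
\begin{equation*}
|\theta A - I|_F^2 = d - \frac{(\mathrm{tr}(A))^2}{|A|_F^2}.
\end{equation*}
The Cord\'es condition \eqref{cordes} is exactly the inequality $|A|_F^2/(\mathrm{tr}(A))^2 \le 1/(d-1+\epsilon)$, i.e.\ $(\mathrm{tr}(A))^2/|A|_F^2 \ge d-1+\epsilon$. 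Plugging this in yields $|\theta A - I|_F^2 \le 1-\epsilon$ a.e.\ in $\Omega$, and combining with the Cauchy--Schwarz bound above proves the lemma.

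The only small point to verify is that $\theta$ is well-defined and the pointwise manipulations are legitimate: ellipticity \eqref{elliptic} gives $\mathrm{tr}(A) \ge d\lambda > 0$ and hence $|A|_F^2 > 0$ a.e., so $\theta \in L^\infty(\Omega)$ is positive and the algebra above holds pointwise almost everywhere. There is no real obstacle; the content of the lemma is entirely in the algebraic identity expressing $|\theta A - I|_F^2$ in terms of $(\mathrm{tr}\,A)^2/|A|_F^2$, together with the observation that the Cord\'es condition is precisely calibrated to make this quantity bounded by $1-\epsilon$.
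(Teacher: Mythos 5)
Your proof is correct. The paper does not prove this lemma itself but simply cites Smears and S\"uli \cite{SS13}, and your argument --- writing $\theta\mathcal{L}v-\Delta v=(\theta A-I):D^2v$, applying Cauchy--Schwarz for the Frobenius inner product, and using the identity $|\theta A-I|_F^2=d-(\mathrm{tr}\,A)^2/|A|_F^2$ together with the Cord\'es condition to bound this by $1-\epsilon$ --- is precisely the standard proof from that reference, with the well-definedness of $\theta$ correctly justified via ellipticity.
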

Instead of using the convexity to ensure the existence of the strong solution of (\ref{GPDE2}) 
in \cite{SS13}, we 
shall use the concept of uniformly positive reach in \cite{GL20}. 
The following is just the restatement of Theorem 3.3 in \cite{GL20}.
\begin{theorem}
	\label{GL}
	Suppose that $\Omega\subset \mathbb{R}^d$ with $d\ge 2$ is a bounded domain with uniformly positive reach.  Then the
	second order elliptic PDE in (\ref{GPDE2}) satisfying (\ref{cordes}) has a unique strong solution in $H^2(\Omega)$. 
\end{theorem}

We now extend the collocation method in the previous section to find a numerical solution of (\ref{GPDE2}). 
Similar to the discussion in the previous section, we can construct the following matrix for the PDE 
in (\ref{GPDE2}): 
\begin{eqnarray*}
	\mathcal{K}:=\begin{bmatrix}\sum_{i,j=1}^d a^{ij}(\xi_i)\frac{\partial^2}{\partial x_i\partial x_j}(B^t_{ijk}(\xi_i) )\end{bmatrix}.
\end{eqnarray*}
Similar to (\ref{min1}), consider the following minimization problem:
\begin{align}
	\label{min3}
	\min_{\bf c} J(c)=\frac{1}{2}(\alpha \|B{\bf c}- {\bf g}\|^2+  \beta \|H{\bf c}\|^2+\gamma \|H_0 {\bf c}\|^2)\quad 
	\text{subject to } -\mathcal{K} {\bf c} = {\bf f},
\end{align}

Again we will solve a nearby minimization problem as in the previous section. 
Like the Poisson equation, we let $\epsilon_1=   
\|\mathcal{K}\bfc +\bff\|$ for the minimizer $\bfc$ of (\ref{min3}). 
To study the convergence,  we may assume that $g=0$ as in the previous section so that 
the solution $u_s$ with the coefficient vector $\bfc$ which is the minimizer of (\ref{min3}) 
satisfies $u_s=0$ on $\partial\Omega$ and hence,  $\|u - u_s\|_{L^2(\partial\Omega)}=0$. Also, we have that 
$\|\mathcal {L}u_s +f\|_{L^2(\Omega)}\le \epsilon_1$. 
To show $u_s$ approximate $u$ over $\Omega$, 
let us define a new norm $\|u\|_{\mathcal{L}}$ on $H^2(\Omega)\cap H^1_0(\Omega)$ as follows. 
\begin{equation}
	\label{H2norm1}
	\|u\|_{\mathcal{L}}=\| \mathcal{L} u\|_{L^2(\Omega)}
\end{equation}
We can show that $\|\cdot \|_{\mathcal{L}}$ is a norm on $H^2(\Omega)\cap H^1_0(\Omega)$ as follows if $\epsilon \in (0, 1]$ is large enough.
Indeed, if $\|u\|_{\mathcal{L}}=0$, then $\mathcal{L} u=0$ in $\Omega$ and $u=0$ on the boundary $\partial \Omega$. 
Using this Lemma \ref{lema5} and Theorem~\ref{mjlai03082021}, we get
\begin{align}
	\int_\Omega \Delta u \Delta u-\int_\Omega (\Delta - \theta \mathcal{L})u  \Delta u  =\int_\Omega 
	\theta \mathcal{L}(u)  \Delta u = 0
\end{align}
and
\begin{eqnarray*}
	& \int_\Omega \Delta u \Delta u-\int_\Omega (\Delta-\theta \mathcal{L} )u  \Delta u  \geq \int_\Omega |\Delta u|^2-\int_\Omega  \sqrt{1-\epsilon} |D^2 u| \cdot|\Delta u|\\
	&=\int_\Omega |\Delta u|^2-\int_\Omega  \sqrt{1-\epsilon} |D^2 u| \cdot |\Delta u| 
	\geq \|\Delta u\|^2-\frac{\sqrt{1-\epsilon}}{A}\|\Delta u\| \|  \Delta u \| .
\end{eqnarray*}
Therefore, if $\epsilon >1-A^2$, then 
\begin{eqnarray*}
	(1-\frac{\sqrt{1-\epsilon}}{A})\| \Delta u\| \le 0.
\end{eqnarray*}
Hence, we know that $u=0.$ The other two properties of the norm can be proved easily.  

We mainly show that the above norm is equivalent to the standard norm on $H^2(\Omega)\cap H^1_0(\Omega)$.   Indeed, 
recall a well-known property about the norm equivalence. 
\begin{lemma}\label{brezis1}([Brezis, 2011 \cite{B11}])
	Let $E$ be a vector space equipped with two norms, $\| \cdot\|_1$ and $\|\cdot \|_2$. Assume that $E$ is a Banach space for both norms and that there exists a constant $C> 0$ such that 
	\begin{align}\label{eq26}
		\|x\|_2 \le C \| x\|_1, ~\forall x\in E.
	\end{align}
	Then the two norms are equivalent, i.e., there is a constant $c>0$ such that 
	\begin{eqnarray*}
		\|x\|_1 \le c_1 \| x\|_2, ~\forall x\in E.
	\end{eqnarray*}
\end{lemma}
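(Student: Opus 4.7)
The plan is to derive Lemma~\ref{brezis1} as an immediate corollary of the Open Mapping Theorem (equivalently, the Bounded Inverse Theorem) of Banach space theory. The forward inequality $\|x\|_2 \le C\|x\|_1$ gives us a bounded linear map, and completeness on both sides lets us invert it.

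Concretely, I would first introduce the identity map $T : (E, \|\cdot\|_1) \to (E, \|\cdot\|_2)$ defined by $T(x) = x$. This is trivially linear and bijective, and the hypothesis \eqref{eq26} is exactly the statement that $T$ is bounded with operator norm at most $C$. Next, because both $(E, \|\cdot\|_1)$ and $(E, \|\cdot\|_2)$ are Banach spaces by assumption, I would apply the Bounded Inverse Theorem to conclude that $T^{-1}$ is also bounded; that is, there exists $c_1 > 0$ with
\begin{equation*}
\|T^{-1} y\|_1 \le c_1 \|y\|_2 \quad \forall y \in E.
\end{equation*}
Since $T^{-1}$ is again the identity, rewriting this in terms of $x \in E$ gives $\|x\|_1 \le c_1 \|x\|_2$, which is the desired reverse inequality, thereby establishing equivalence of the two norms.

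There is no genuine technical obstacle here; the entire content is the invocation of the Open Mapping Theorem (or, equivalently, the Closed Graph Theorem applied to $T^{-1}$ whose graph is the same set as that of $T$ up to a swap of coordinates, hence closed). The only point that deserves emphasis in the write-up is that \emph{completeness of $E$ under both norms} is essential: without it one can easily produce comparable but non-equivalent norms on an incomplete space, so the Banach hypothesis is not decorative. In the subsequent application to $H^2(\Omega)\cap H^1_0(\Omega)$, the completeness for $\|\cdot\|_{\mathcal L}$ was already established earlier in the paper (mirroring the argument in the proof of Theorem~\ref{mjlai03082021}), which is what allows Lemma~\ref{brezis1} to close the loop and yield Theorem~\ref{mjlai03172021}.
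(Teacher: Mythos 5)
Your proposal is correct and follows essentially the same route as the paper: both arguments consider the identity map $(E,\|\cdot\|_1)\to(E,\|\cdot\|_2)$, observe it is a continuous bijection by \eqref{eq26}, and invoke the Open Mapping Theorem (the paper carries out the open-ball argument explicitly, while you cite the Bounded Inverse Theorem, which is the same result). No gaps.
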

\begin{proof}
	We define $E_1= (E, \|\cdot\|_1)$ and $E_2= (E, \|\cdot\|_2)$ 
	be two spaces equipped with two different norms.
	It is easy to see that $E_1$ and $E_2$ are Banach spaces. 
	Let $I$ be the identity operator which maps any u in $E_1$ to $u$ in $E_2$. Clearly, it is an injection and onto 
	because of the identity mapping and hence, it is  a surjection.
	Because of (\ref{eq26}), the mapping $I$ is a continuous operator.  
	Now we can use the well-known open mapping theorem. Let $B_1(0,1)= \{u\in E_1, \|u\|_1\le 1\}$ be an open ball. 
	The open mapping theorem says that $I (B_1(0,1))$ is open and hence, 
	it contains a ball $B_2(0, c)=\{u\in E_2, \|u\|_2 <c\}$. 
	That is, $ B_2(0, c)\subset I(B_1(0,1))$.  
	Let us claim that $c\|u\|_1 \le \| I(u)\|_2$ for all $u\in E_1$.   Otherwise, there exists 
	a $u^*$ such that  $c\|u^*\|_1> \|I(u^*)\|_2$.
	That is, $c >  \|I (u^*/\|u^*\|_1)\|_2$.  So  $I (u^*/\|u^*\|_1) \in B_2(0, c)$.  There is a $u^{**} \in B_1(0,1)$ 
	such that 
	$I u^{**} = I (u^*/\|u^*\|_1)$. Since $I$ is an injection,  $u^{**}=  I (u^*/\|u^*\|_1$.  
	Since $u^{**}\in B_1(0,1)$, 
	we have $1> \|u^{**}\|_1=  \| (u^*/\|u^*\|_1))\|=1$ which is a contradiction.   
	This shows that the claim is correct. we have
	thus $c\|u\|_1 \le \| I(u)\|_2= \|u\|_2$ for all $u\in E_1$. We choose $c_1=1/c$ to finish the proof.   
\end{proof}
Using Lemma \ref{brezis1}, we can prove the following theorem
\begin{theorem}
	\label{mjlai03222021}
	Suppose that $\Omega$ is bounded and has  uniformly positive reach $r_\Omega>0$ .
	Then there exist two positive constants $A_1$ and $B_1$ such that 
	\begin{equation}
		\label{equivalent4}
		A_1 \|u \|_{H^2(\Omega)} \le \|u \|_{\mathcal{L}} \le B_1 \| u\|_{H^2(\Omega)}, \quad \forall u\in H^2(\Omega)\cap H^1_0(\Omega).
	\end{equation}
\end{theorem}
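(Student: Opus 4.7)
The plan is to deduce the lower bound from Lemma~\ref{brezis1} (the Brezis open-mapping argument), after first establishing the upper bound directly and verifying that $E := H^2(\Omega)\cap H^1_0(\Omega)$ is complete under both norms. This is the cleanest route since the paper has just set up exactly the ingredients Lemma~\ref{brezis1} needs; the fact that $\|\cdot\|_{\mathcal{L}}$ is a norm has already been argued in the text preceding the theorem (provided $\epsilon > 1 - A^2$).

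First I would prove the upper bound. Since every $a^{ij}\in L^\infty(\Omega)$, I would estimate termwise
\begin{equation*}
\|u\|_{\mathcal{L}} = \|\mathcal{L}u\|_{L^2(\Omega)} \le \sum_{i,j=1}^d \|a^{ij}\|_{L^\infty(\Omega)} \Big\|\frac{\partial^2 u}{\partial x_i\,\partial x_j}\Big\|_{L^2(\Omega)} \le B_1 \|u\|_{H^2(\Omega)},
\end{equation*}
with $B_1$ depending only on the $L^\infty$-norms of the coefficients and on $d$. This gives the right inequality in \eqref{equivalent4} and also shows that convergence in $\|\cdot\|_{H^2}$ implies convergence in $\|\cdot\|_{\mathcal{L}}$.

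Next I would show that $(E,\|\cdot\|_{\mathcal{L}})$ is a Banach space, mirroring the corresponding argument in the proof of Theorem~\ref{mjlai03082021}. Let $\{u_n\}\subset E$ be Cauchy in $\|\cdot\|_{\mathcal{L}}$; then $\{\mathcal{L}u_n\}$ is Cauchy in $L^2(\Omega)$ and converges to some $U^*\in L^2(\Omega)$. Here I invoke Theorem~\ref{GL}: because $\Omega$ has uniformly positive reach and $\mathcal{L}$ satisfies ellipticity and Cord\'es, the Dirichlet problem $\mathcal{L}u^* = U^*$ in $\Omega$, $u^*=0$ on $\partial\Omega$ admits a unique strong solution $u^*\in H^2(\Omega)\cap H^1_0(\Omega)$. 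Then $\|u_n - u^*\|_{\mathcal{L}} = \|\mathcal{L}u_n - U^*\|_{L^2(\Omega)}\to 0$, so $(E,\|\cdot\|_{\mathcal{L}})$ is complete. The space $(E,\|\cdot\|_{H^2})$ is standard Banach.

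Finally I would apply Lemma~\ref{brezis1} with $\|\cdot\|_1 = \|\cdot\|_{H^2}$ and $\|\cdot\|_2 = \|\cdot\|_{\mathcal{L}}$; the hypothesis \eqref{eq26} of that lemma is precisely the upper bound established in the first step. The lemma yields a constant $c_1>0$ with $\|u\|_{H^2}\le c_1 \|u\|_{\mathcal{L}}$, i.e. the left inequality of \eqref{equivalent4} with $A_1 = 1/c_1$. The main obstacle is the completeness step: without Theorem~\ref{GL} there is no way to extract from the $L^2$-limit of $\mathcal{L}u_n$ an actual $H^2$-function $u^*$ whose image under $\mathcal{L}$ is that limit, and this is exactly where the Cord\'es condition and the uniformly positive reach of $\Omega$ enter in an essential way.
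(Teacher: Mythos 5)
Your proposal is correct and follows essentially the same route as the paper: the upper bound is obtained termwise from the $L^\infty$ bounds on the $a^{ij}$, and the lower bound comes from Lemma~\ref{brezis1} applied with $\|\cdot\|_1=\|\cdot\|_{H^2}$ and $\|\cdot\|_2=\|\cdot\|_{\mathcal{L}}$. The only difference is that you explicitly verify the completeness of $\bigl(H^2(\Omega)\cap H^1_0(\Omega),\ \|\cdot\|_{\mathcal{L}}\bigr)$ via Theorem~\ref{GL}, a hypothesis of Lemma~\ref{brezis1} that the paper's one-line proof leaves implicit; this is a genuine improvement in rigor rather than a different method.
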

\begin{proof}
	It follows that 
	\begin{eqnarray*}
		\|u\|_{\mathcal{L}} &\le & \max_{i,j=1\cdots , d}\|a^{ij}\|_\infty \sum_{i,j=1}^d \|\frac{\partial^2}{\partial x_i\partial
			x_j}u\|_{L^2(\Omega)}
		\le  B_1 \|u\|_{H^2(\Omega)}
	\end{eqnarray*}
	for all $u\in H^2(\Omega)\cap H^1_0(\Omega)$, where $B_1$ depending on $d,\Lambda$ and $C$.  
	Using Lemma 4 and the above inequality, there exist $\alpha_1 >0$ satisfying 
	\begin{eqnarray*}   
		\|u\|_{H^2} \le \alpha_1 \|u\|_{\mathcal{L}}.
	\end{eqnarray*}
	Therefore, we choose $A_1=\frac{1}{\alpha_1}$ to finish the proof. 
\end{proof}

\begin{theorem} 
Let $\Omega$ be a bounded and closed set satisfying the
	uniformly positive reach condition. Assume that $a^{ij}\in L^\infty (\Omega)$ satisfy \eqref{symmetry}, \eqref{elliptic} and \eqref{cordes} and 
	$\epsilon >1-A^2$. Suppose that $u\in H^3(\Omega)$ and $u-u_s =0$ on 
	$\partial \Omega$. For the solution $u$ of equation \eqref{GPDE3} and the corresponding minimizer 
	$u_s$, we have the following inequality
	\begin{eqnarray*}
		\|u-u_s\|_{L^2(\Omega)} \le C\epsilon_1
	\end{eqnarray*}
	for a positive constant $C$ depending on $\Omega$ and $A_1$ which is one of the constants in Theorem~\ref{mjlai03222021}. Similar for $\|\nabla (u-u_s)\|_{L^2(\Omega)}$ and $|u-u_s|_{H^2}$.  
\end{theorem}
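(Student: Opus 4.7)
The plan is to mirror Theorem \ref{mainthm2}, with the Poisson-specific norm-equivalence Theorem \ref{mjlai03082021} replaced by its non-divergence analog, Theorem \ref{mjlai03222021}. Two ingredients are needed: (i) a residual estimate $\|\mathcal{L}(u - u_s)\|_{L^2(\Omega)} \le \widehat{C}\,\epsilon_1$ playing the role of Lemma \ref{lem3} for the general operator $\mathcal{L}$, and (ii) the inequality $A_1 \|v\|_{H^2} \le \|v\|_{\mathcal{L}}$ applied to $v = u - u_s$; the latter is available because $u - u_s \in H^2(\Omega) \cap H^1_0(\Omega)$ by the hypothesis $u - u_s = 0$ on $\partial\Omega$, so Theorem \ref{mjlai03222021} applies directly.

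Given (i) and (ii), I would chain them as
\begin{equation*}
A_1 \|u - u_s\|_{H^2(\Omega)} \;\le\; \|\mathcal{L}(u - u_s)\|_{L^2(\Omega)} \;\le\; \widehat{C}\,\epsilon_1,
\end{equation*}
set $C := \widehat{C}/A_1$, and observe that $\|u-u_s\|_{L^2(\Omega)}$, $\|\nabla(u-u_s)\|_{L^2(\Omega)}$, and $|u-u_s|_{H^2(\Omega)}$ are each dominated by $\|u - u_s\|_{H^2(\Omega)}$, so all three claimed inequalities fall out simultaneously with a single constant $C$ depending only on $\Omega$ and $A_1$.

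To establish (i), I would imitate the proof of Lemma \ref{lem3}. Take the quasi-interpolatory spline $s_u \in \mathcal{S}^r_D(\triangle)$ from Lemma \ref{lem1}, and note that the pointwise estimate $|\mathcal{L}(u - s_u)(x)| \le \max_{ij}\|a^{ij}\|_\infty \sum_{ij}|\partial_{ij}(u - s_u)(x)|$ together with Lemma \ref{lem1} certifies that the feasible set of \eqref{min3} is nonempty when $|\triangle|$ is small enough, so $|\mathcal{L}u_s - f|(\xi_i) \le \epsilon_1$ at every collocation point $\xi_i$. Subtracting $\mathcal{L}u = f$ and applying the triangle inequality yields $|\mathcal{L}(s_u - u_s)(\xi_i)| \le 2\epsilon_1$ for all $\xi_i$.

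The hard part is upgrading this discrete $\ell^\infty$ bound to an $L^2(\Omega)$ bound on $\mathcal{L}(s_u - u_s)$. In the Poisson case, Lemma \ref{lem3} exploited the fact that $\Delta(s_u - u_s)|_T$ is itself a polynomial, so Theorem 2.27 of \cite{LS07} transferred the collocation values to a uniform bound on $T$ in one step. Here, $\mathcal{L}(s_u - u_s) = \sum a^{ij}\partial_{ij}(s_u - u_s)$ is \emph{not} a polynomial because the coefficients $a^{ij}$ are only in $L^\infty(\Omega)$, so Theorem 2.27 cannot be invoked on $\mathcal{L}(s_u - u_s)$ as a whole. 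My intended fix is to work with the polynomial partials $\partial_{ij}(s_u - u_s)|_T$ of degree $D-2$ separately: the oversampling condition $D' > D$ supplies more than enough collocation equations per triangle to translate the residual bounds into B\'ezier-coefficient bounds for $s_u - u_s$ via the invertibility of the local collocation map, then Theorem 2.27 of \cite{LS07} yields $L^\infty(T)$ estimates on each $\partial_{ij}(s_u - u_s)$, and finally $\|a^{ij}\|_\infty$ is absorbed into $\widehat{C}$. Summing the local bounds against triangle volume produces the global $L^2(\Omega)$ estimate, with $\widehat{C}$ depending on $\Omega$, $D$, $D'$, and $\max_{ij}\|a^{ij}\|_\infty$.
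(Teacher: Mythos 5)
Your overall skeleton --- chain the norm equivalence $A_1\|u-u_s\|_{H^2}\le\|\mathcal L(u-u_s)\|_{L^2}$ of Theorem~\ref{mjlai03222021} with a residual bound $\|\mathcal L(u-u_s)\|_{L^2}\le\hat C\epsilon_1$ --- is exactly what the paper intends; in fact the paper prints no proof of this theorem at all and simply asserts, just before it, that $\|\mathcal L u_s+f\|_{L^2(\Omega)}\le\epsilon_1$ for the minimizer. Your part (ii) is fine. The problem is your plan for part (i). The ``local collocation map'' $p\mapsto\bigl(\mathcal L p(\xi_\ell)\bigr)_\ell$ on $\mathcal P_D|_T$ is \emph{not} injective modulo linear polynomials, no matter how large $D'$ is: already for $a^{ij}=\delta_{ij}$ (which satisfies \eqref{symmetry}, \eqref{elliptic} and \eqref{cordes}) every harmonic polynomial of degree $\le D$ lies in its kernel, and there are $2D+1>\dim\mathcal P_1$ of them. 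So you cannot translate the residual values $|\mathcal L(s_u-u_s)(\xi_\ell)|\le 2\epsilon_1$ into B\'ezier-coefficient bounds on $s_u-u_s$, and a fortiori not into $L^\infty(T)$ bounds on each individual $\partial_{ij}(s_u-u_s)$. This is not a repairable technicality of your route: if collocation residuals alone controlled the full Hessian of $s_u-u_s$ triangle by triangle, the Cord\'es condition and the global norm equivalence of Theorem~\ref{mjlai03222021} would be superfluous --- their entire purpose is to convert control of the combination $\mathcal L v$ into control of $D^2v$ for $v\in H^2\cap H^1_0$, and that conversion is a global PDE fact, not local linear algebra.

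What a correct argument must do is bound only the combination $\|\mathcal L(s_u-u_s)\|_{L^2(T)}$, mirroring Lemma~\ref{lem3} where the controlled object $\Delta(s_u-u_s)|_T$ happens to be a polynomial. Here $\mathcal L(s_u-u_s)=\sum a^{ij}\partial_{ij}(s_u-u_s)$ is not a polynomial when $a^{ij}\in L^\infty$ only, and smallness at the collocation points genuinely fails to imply smallness in $L^2(T)$: take $q$ harmonic with large $\partial_{11}q$ and $a^{11}$ equal to $1$ at every $\xi_\ell$ but $2$ elsewhere; all collocation residuals vanish while $\mathcal L q=\partial_{11}q$ a.e. So passing from the discrete residual to the continuous one requires an extra hypothesis (e.g.\ continuity or piecewise regularity of the $a^{ij}$, so that a frozen-coefficient/Riemann-sum argument applies on each triangle), which neither you nor the paper supplies. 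You should either add such a hypothesis and argue triangle-by-triangle with frozen coefficients, or simply take the paper's unproved assertion $\|\mathcal L u_s+f\|_{L^2(\Omega)}\le\epsilon_1$ as the starting point and flag it as an assumption.
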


Next  we consider the case that $b^i$ and $c^1$ are not zero. 
Assume that $\|a^{ij}\|_\infty$, $\|b^i \|_\infty$, $\|c^1 \|_\infty 
\le \Lambda_1$ and we denote that $\mathcal{L}_1(u):
=\sum_{i,j=1}^d a^{ij}(x)\frac{\partial^2}{\partial x_i\partial 
	x_j}u+\sum_{i=1}^d b^{i}(x) \frac{\partial}{\partial x_i}u+c^1(x)u$ and 
define a new norm $\|u\|_{\mathcal{L}_1}$ on $H^2(\Omega)\cap H^1_0(\Omega)$ as follows. 
\begin{equation}
	\label{H2norm2n}
	\|u\|_{\mathcal{L}_1}=\| \mathcal{L}_1 u\|_{L^2(\Omega)}.
\end{equation}
Assume that $\|u\|_{\mathcal{L}_1}=0$, i.e., $\mathcal{L}_1u=0$ over $\Omega$ and $u =0$ on 
$\partial \Omega$. From (\ref{SS13}), we have
\begin{eqnarray*}
	\int_\Omega \theta \mathcal{L} (u) \Delta u \geq \| \Delta u \|^2 - 
	\frac{\sqrt{1-\epsilon}}{A}\| \Delta u \|^2.
\end{eqnarray*}
Then by the above inequality we get 
\begin{eqnarray*}
	0&=&\int_\Omega \theta \mathcal{L}_1(u) \Delta u= 
	\int_\Omega \theta \mathcal{L}(u) \Delta u
	+\sum_{i=1}^d \theta  b^{i}(x) \frac{\partial}{\partial x_i}u \Delta u+\theta  c^1(x)u \Delta u\\
	&\geq&  \|\Delta u\|^2-\frac{\sqrt{1-\epsilon}}{A}\|\Delta u\|^2+
	\int_\Omega \sum_{i=1}^d \theta b^{i}(x) \frac{\partial}{\partial 
		x_i}u \Delta u+\theta  c^1(x)u\Delta u \\
	&\geq& \|\Delta u\|_{L^2(\Omega)}^2-\frac{\sqrt{1-\epsilon}}{A}\|\Delta u\|_{L^2(\Omega)}^2-\|\theta \|_\infty 
	\max_{i}\| b^i 
	\|_\infty \sqrt{d} \|\nabla u\|_{L^2(\Omega)} \|\Delta u\|_{L^2(\Omega)}\\
	&&-\|\theta \|_\infty \| c^1 \|_\infty \| u\|_{L^2(\Omega)
	} \|\Delta u\|_{L^2(\Omega)}\\
	&=& \|\Delta u\|_{L^2(\Omega)}^2-\frac{\sqrt{1-\epsilon}}{A}\|\Delta u\|_{L^2(\Omega)}^2-C_m (\|\nabla u\|_{L^2(\Omega)} \|\Delta
	u\|_{L^2(\Omega)}+\| u\|_{L^2(\Omega)} \|\Delta u\|_{L^2(\Omega)}),
\end{eqnarray*}
where $C_m=\max \{\|\theta \|_\infty \max_{i}\| b^i \|_\infty \sqrt{d}, \|\theta \|_\infty \| c^1 \|_\infty \}$. 
Dividing $\|\Delta u\|_{L^2(\Omega)}$ both sides of the inequality above 
and using Theorem \ref{mjlai03182021}, it is followed that
\begin{eqnarray*}
	0&\geq& \|\Delta u\|_{L^2(\Omega)}-\frac{\sqrt{1-\epsilon}}{A}\|\Delta u\|_{L^2(\Omega)}-C_m (\|\nabla u\|_{L^2(\Omega)} +\| u\|_{L^2(\Omega)} )\\
	&\geq& \|\Delta u\|_{L^2(\Omega)}-\frac{\sqrt{1-\epsilon}}{A}\|\Delta u\|_{L^2(\Omega)}-C_m  \| u\|_{H^2(\Omega)}\\
	&\geq& \|\Delta u\|_{L^2(\Omega)}-\frac{\sqrt{1-\epsilon}}{A}\|\Delta u\|_{L^2(\Omega)}-\frac{C_m }{A} 
	\| \Delta u\|_{L^2(\Omega)}\\
	&=&\|\Delta u\|_{L^2(\Omega)}(1-\frac{\sqrt{1-\epsilon}}{A}-\frac{C_m}{A} ).
\end{eqnarray*}
If the constant $(1-\frac{\sqrt{1-\epsilon}}{A}-\frac{C_m}{A} )$ is positive, 
then we can conclude that $\Delta u=0$. Together with  
the fact $u =0$ on $\partial \Omega$,  we know $u=0$. The other properties  $\|u+v\|_{\mathcal{L}_1} \le \|u\|_{\mathcal{L}_1}+\|v\|_{\mathcal{L}_1}$ and $\|a u\|_{\mathcal{L}_1}=|a|\|u\|_{\mathcal{L}_1}$ can be easily 
proved. The detail is omitted.

\begin{theorem}
	\label{mjlai03302021} 
	Assume that $(1-\frac{\sqrt{1-\epsilon}}{A}-\frac{C_m}{A} )>0$.
	There exist two positive constants $A_2$ and $B_2$ such that 
	\begin{equation}
		\label{equivalent5}
		A_2 \|u \|_{H^2(\Omega)} \le \|u \|_{\mathcal{L}_1} \le B_2 \| u\|_{H^2(\Omega)}, \quad \forall u\in H^2(\Omega)\cap H^1_0(\Omega).
	\end{equation}
\end{theorem}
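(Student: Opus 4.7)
The plan is to mirror the structure of the proof of Theorem~\ref{mjlai03222021}: establish the upper bound directly from the $L^\infty$ bounds on the PDE coefficients, verify that both $\|\cdot\|_{H^2}$ and $\|\cdot\|_{\mathcal{L}_1}$ make $H^2(\Omega)\cap H^1_0(\Omega)$ a Banach space, and then obtain the lower bound as an automatic consequence of the open mapping argument packaged in Lemma~\ref{brezis1}.

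For the upper bound I would use the assumption $\|a^{ij}\|_\infty, \|b^i\|_\infty, \|c^1\|_\infty\le \Lambda_1$ together with the triangle inequality in $L^2(\Omega)$ to get
\begin{eqnarray*}
\|u\|_{\mathcal{L}_1}
&\le& \Lambda_1\Big(\sum_{i,j=1}^d \|\frac{\partial^2 u}{\partial x_i \partial x_j}\|_{L^2(\Omega)}
+ \sqrt{d}\,\|\nabla u\|_{L^2(\Omega)} + \|u\|_{L^2(\Omega)}\Big) \le B_2 \|u\|_{H^2(\Omega)},
\end{eqnarray*}
for a constant $B_2$ depending only on $d$ and $\Lambda_1$. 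The fact that $\|\cdot\|_{\mathcal{L}_1}$ is a bona fide norm on $H^2(\Omega)\cap H^1_0(\Omega)$ has already been verified in the discussion preceding the theorem under the hypothesis $1-\sqrt{1-\epsilon}/A-C_m/A>0$, using Lemma~\ref{lema5}, the test function $\Delta u$, and the lower inequality of Theorem~\ref{mjlai03082021}.

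The heart of the argument is completeness of $H^2(\Omega)\cap H^1_0(\Omega)$ under $\|\cdot\|_{\mathcal{L}_1}$, so that Lemma~\ref{brezis1} applies. Given a Cauchy sequence $\{u_n\}$ in this norm, $\{\mathcal{L}_1 u_n\}$ is Cauchy in $L^2(\Omega)$ and converges to some $F\in L^2(\Omega)$. By Theorem~\ref{GL}, which applies to the full non-divergence operator in (\ref{GPDE2}) on a bounded domain of uniformly positive reach satisfying the Cord\'es condition, the Dirichlet problem $\mathcal{L}_1 u^*=F$ in $\Omega$ with $u^*=0$ on $\partial\Omega$ admits a unique strong solution $u^*\in H^2(\Omega)\cap H^1_0(\Omega)$. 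By linearity, $\|u_n-u^*\|_{\mathcal{L}_1}=\|\mathcal{L}_1 u_n-F\|_{L^2(\Omega)}\to 0$, and the space is complete.

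With both $(H^2(\Omega)\cap H^1_0(\Omega),\|\cdot\|_{H^2})$ and $(H^2(\Omega)\cap H^1_0(\Omega),\|\cdot\|_{\mathcal{L}_1})$ Banach and the continuous embedding $\|u\|_{\mathcal{L}_1}\le B_2\|u\|_{H^2}$ in hand, Lemma~\ref{brezis1} yields a constant $\alpha_2>0$ with $\|u\|_{H^2}\le \alpha_2\|u\|_{\mathcal{L}_1}$, and choosing $A_2=1/\alpha_2$ finishes the proof. The main obstacle I expect is exactly the completeness step: it relies on invoking the strong-solvability theorem from \cite{GL20} not for the bare Laplacian or for $\mathcal{L}$ alone but for the full operator $\mathcal{L}_1$ carrying both first-order and zero-order terms. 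Once this is justified (for instance by observing that the lower-order terms fall into the scope of the Cord\'es-type machinery used in Theorem~\ref{GL}, possibly requiring the sign/size conditions implicit in the standing hypothesis $1-\sqrt{1-\epsilon}/A-C_m/A>0$), the remaining steps are essentially cosmetic modifications of the proof of Theorem~\ref{mjlai03222021}.
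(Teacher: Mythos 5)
Your proposal is correct and follows exactly the route the paper intends: the paper's own ``proof'' of Theorem~\ref{mjlai03302021} is only the one-line remark that it can be done via Lemma~\ref{brezis1}, and you have filled in the details in precisely the way the proofs of Theorems~\ref{mjlai03082021} and~\ref{mjlai03222021} suggest (upper bound from the $L^\infty$ coefficient bounds, completeness of $\|\cdot\|_{\mathcal{L}_1}$ via strong solvability of the Dirichlet problem for $\mathcal{L}_1$, then the open mapping argument). Your flagged concern about invoking the $H^2$ strong-solvability result for the full operator $\mathcal{L}_1$ with lower-order terms is a real subtlety, but it is one the paper itself glosses over, since its Theorem~\ref{GL} is stated for the full equation (\ref{GPDE2}).
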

\begin{proof}
The proof can be done by using Lemma~\ref{brezis1}.  We leave it to the interested reader.  
\end{proof}

Therefore, we can get the following theorem for the general elliptic PDE:
\begin{theorem} Suppose $\Omega\subset  \mathbb{R}^d$ be a bounded domain and the closure of $\Omega$ is 
	of uniformly positive reach $r_\Omega>0$. 
	Assume that $a^{ij}, b^i, c^1\in L^\infty (\Omega)$ 
	satisfy \eqref{symmetry}, \eqref{elliptic}, \eqref{cordes} and $(1-\frac{\sqrt{1-\epsilon}}{A}-\frac{C_m }{A} )>0$. Suppose that $u\in H^3(\Omega)$ and $u-u_s =0$ on $\partial \Omega$. 
For the solution $u$ of equation \eqref{GPDE2} and the corresponding minimizer $u_s$, we have the following inequality
\begin{eqnarray*}
		\|u-u_s\|_{L^2(\Omega)} \le C\epsilon_1
\end{eqnarray*}
	for a positive constant $C$ depending on $\Omega$ and a constant $A_2$ in Theorem~\ref{mjlai03302021}.
\end{theorem}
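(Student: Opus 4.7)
The plan is to follow exactly the argument for Theorem~\ref{mainthm2}, replacing the Laplacian $-\Delta$ by the operator $\mathcal{L}_1$ and invoking Theorem~\ref{mjlai03302021} in place of Theorem~\ref{mjlai03082021}. The end of the argument will be: once we have an $\mathcal{L}_1$-analogue of Lemma~\ref{lem3}, namely
\begin{equation*}
\|\mathcal{L}_1 u - \mathcal{L}_1 u_s\|_{L^2(\Omega)} \le \hat{C}_1 \, \epsilon_1,
\end{equation*}
the linearity of $\mathcal{L}_1$ and the assumption $u-u_s=0$ on $\partial\Omega$ put $u-u_s\in H^2(\Omega)\cap H^1_0(\Omega)$, so Theorem~\ref{mjlai03302021} yields
\begin{equation*}
A_2\|u-u_s\|_{H^2(\Omega)} \le \|u-u_s\|_{\mathcal{L}_1} = \|\mathcal{L}_1(u-u_s)\|_{L^2(\Omega)} \le \hat{C}_1\epsilon_1,
\end{equation*}
whence $\|u-u_s\|_{L^2(\Omega)} \le (\hat{C}_1/A_2)\,\epsilon_1 = C\epsilon_1$, which is what we want. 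The hypotheses on $\epsilon$, $A$ and $C_m$ enter exactly where they are needed to make $\|\cdot\|_{\mathcal{L}_1}$ a norm and apply Theorem~\ref{mjlai03302021}.

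So the real work is the $\mathcal{L}_1$-analogue of Lemma~\ref{lem3}. First I would invoke Lemma~\ref{lem1} to obtain a quasi-interpolatory spline $s_u\in \mathcal{S}^r_D(\triangle)$ with $\|D^\alpha(u-s_u)\|_{L^\infty(\Omega)}$ arbitrarily small for $|\alpha|\le 2$, uniformly in $\triangle$ as $|\triangle|\to 0$. Since $a^{ij},b^i,c^1\in L^\infty(\Omega)$, this gives $|\mathcal{L}_1 u-\mathcal{L}_1 s_u|\le C_c\epsilon$ pointwise, and hence $\|\mathcal{L}_1 s_u+\bff\|_{L^2(\Omega)}\le \epsilon_1$ for $|\triangle|$ small; this verifies that the constraint in (\ref{min3}) is feasible. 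The minimizer $u_s$ therefore satisfies $|\mathcal{L}_1 u_s(\xi_i)-f(\xi_i)|\le \epsilon_1$ at every collocation point $\xi_i$, and combining with $\mathcal{L}_1 u=f$ and the previous pointwise bound gives $|\mathcal{L}_1(u_s-s_u)(\xi_i)|\le \epsilon_1+C_c\epsilon$ at every $\xi_i$.

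The remaining step is to promote this pointwise smallness at $\{\xi_i\}$ to an $L^2$ bound. In Lemma~\ref{lem3} this was immediate because $\Delta(u_s-s_u)$ is a polynomial of degree $D-2$ on each triangle, to which Theorem 2.27 of \cite{LS07} applies directly. This is the main obstacle in the present setting: the variable coefficients $a^{ij},b^i,c^1$ prevent $\mathcal{L}_1(u_s-s_u)$ from being polynomial on $t\in\triangle$. I would handle this by decomposing
\begin{equation*}
\mathcal{L}_1(u_s-s_u)=\sum_{i,j=1}^d a^{ij}\partial_{ij}(u_s-s_u)+\sum_{i=1}^d b^i\partial_i(u_s-s_u)+c^1(u_s-s_u),
\end{equation*}
and treating each of the pieces $u_s-s_u$, $\partial_i(u_s-s_u)$, $\partial_{ij}(u_s-s_u)$, which are genuine polynomials on each $t$, via the B-form stability of \cite{LS07} at the domain points $\mathcal{D}_{D',t}$ with $D'$ chosen large enough to be unisolvent for $\mathcal{P}_D$. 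Combining these bounds with $\|a^{ij}\|_\infty,\|b^i\|_\infty,\|c^1\|_\infty\le \Lambda_1$ and summing over triangles yields $\|\mathcal{L}_1(u_s-s_u)\|_{L^2(\Omega)}\le C\epsilon_1$, and then $\|\mathcal{L}_1(u-u_s)\|_{L^2(\Omega)}\le \hat{C}_1\epsilon_1$ follows by the triangle inequality with $\|\mathcal{L}_1(u-s_u)\|_{L^2(\Omega)}$. Once this is done, the first paragraph above closes the proof.
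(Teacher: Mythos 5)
Your overall strategy coincides with the paper's: the theorem is stated there with no written proof, the intended argument being exactly the reduction you describe --- establish $\|\mathcal{L}_1(u-u_s)\|_{L^2(\Omega)}\le \hat C_1\epsilon_1$ and then apply the norm equivalence of Theorem~\ref{mjlai03302021} to $u-u_s\in H^2(\Omega)\cap H^1_0(\Omega)$. Your first paragraph is a faithful rendering of that reduction, and you correctly identify the one place where the Poisson argument of Lemma~\ref{lem3} does not transfer verbatim, namely that $\mathcal{L}_1(u_s-s_u)$ is no longer a polynomial on each triangle.

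However, your proposed repair of that step has a genuine gap. You plan to bound each piece $u_s-s_u$, $\partial_i(u_s-s_u)$, $\partial_{ij}(u_s-s_u)$ separately via B-form stability at the domain points and then recombine using $\|a^{ij}\|_\infty,\|b^i\|_\infty,\|c^1\|_\infty\le\Lambda_1$. But the stability result (Theorem 2.27 of \cite{LS07}) converts smallness of a polynomial's values at a unisolvent set of domain points into smallness of its sup-norm, so to apply it to $\partial_{ij}(u_s-s_u)$ you would need to know that $\partial_{ij}(u_s-s_u)(\xi_\ell)$ itself is small at those points. The collocation constraint only gives smallness of the coefficient-weighted combination $\sum_{i,j}a^{ij}(\xi_\ell)\,\partial_{ij}(u_s-s_u)(\xi_\ell)+\cdots$ at each $\xi_\ell$; it does not control the individual derivatives, and since the $a^{ij}$ are merely bounded measurable there is no pointwise ellipticity argument that extracts them from the sum. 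So the decomposition cannot be executed as stated. For what it is worth, the paper itself does not close this step either: after setting $\epsilon_1=\|\mathcal{K}\bfc+\bff\|$ it simply asserts $\|\mathcal{L}u_s+f\|_{L^2(\Omega)}\le\epsilon_1$ and proceeds, leaving the discrete-to-continuous passage implicit. If you grant that assertion (or read the constraint in (\ref{min3}) in its continuous form), your first paragraph alone completes the proof.
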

Finally we show that the convergence of $\|u- u_s\|_{L^2(\Omega)}$ and $\|\nabla(u- u_s)\|_{L^2(\Omega)}$ can be better 
\begin{theorem}
	\label{mjlai05182021}
	Suppose that the bounded domain $\Omega$ has an uniformly positive reach.  
	Suppose $f$ and $g$ are continuous over bounded domain $\Omega \subseteq \mathbb{R}^d$ for 
	$d=2,3$. Let $u$ be the solution of the general second order PDE \eqref{GPDE2} 
with differential operator ${\cal L}$.  
Suppose that $u\in H^3(\Omega)$. If  $u-u_s|_{\partial \Omega}=0$, we further have 
	the following inequality
	\begin{eqnarray*}
		\|u-u_s\|_{L^2(\Omega)} \le C|\triangle|^2 \epsilon_1 \hbox{ and }
		\|\nabla (u-u_s)\|_{L^2(\Omega)} \le C|\triangle| \epsilon_1
	\end{eqnarray*}
	for a positive constant $C=1/A_2$, where $A_2$ is one of the constants in Theorem~\ref{mjlai03082021} and $|\triangle|$ is the
	size of the underlying triangulation $\triangle$. 
\end{theorem}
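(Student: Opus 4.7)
The plan is to mirror the proof of Theorem~\ref{mjlai05122021}, reducing the general elliptic case to a Poisson-type situation via the Cord\'es inequality of Lemma~\ref{lema5} and then reapplying the Aubin--Nitsche duality argument. I would proceed in three steps.

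\textbf{Step 1: Control $\|\Delta(u-u_s)\|_{L^2(\Omega)}$ by $\epsilon_1$.} The minimizer satisfies $\|\mathcal{L}(u-u_s)\|_{L^2(\Omega)}\le\epsilon_1$, and by Lemma~\ref{lema5} the quantity $\theta\,\mathcal{L}(u-u_s)$ differs pointwise from $\Delta(u-u_s)$ by at most $\sqrt{1-\epsilon}\,|D^{2}(u-u_s)|$. Theorem~\ref{mjlai03222021} (or Theorem~\ref{mjlai03302021} when the lower-order terms $b^{i},c^{1}$ are present) yields $\|u-u_s\|_{H^{2}(\Omega)}\le \epsilon_1/A_{1}$ (respectively $\epsilon_{1}/A_{2}$), so the second-derivative term on the right-hand side of the Cord\'es inequality is absorbed, producing $\|\Delta(u-u_s)\|_{L^{2}(\Omega)}\le C'\epsilon_{1}$ for a constant $C'$ depending on $\|\theta\|_{\infty}$, $\epsilon$, $A_{1}$ (or $A_{2}$), and in the general case the constant $C_{m}$.

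\textbf{Step 2: $H^{1}$-estimate via Green's identity.} Since $(u-u_s)|_{\partial\Omega}=0$, Green's identity and the continuous linear interpolant $L_{u-u_s}$ given by \eqref{linespline} give
\begin{equation*}
\|\nabla(u-u_s)\|^{2}_{L^{2}(\Omega)}
= -\int_{\Omega}\Delta(u-u_s)(u-u_s)
= \int_{\Omega}\nabla(u-u_s-L_{u-u_s})\cdot\nabla(u-u_s).
\end{equation*}
Cauchy--Schwarz, the interpolation bound \eqref{linespline} with $\alpha+\beta=1$, and the $H^{2}$ estimate from Step~1 give $\|\nabla(u-u_s)\|_{L^{2}(\Omega)}\le C|\triangle|\epsilon_{1}$.

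\textbf{Step 3: $L^{2}$-estimate via Aubin--Nitsche duality.} Let $w\in H^{2}(\Omega)\cap H^{1}_{0}(\Omega)$ solve the auxiliary Poisson problem $-\Delta w=u-u_s$ with $w|_{\partial\Omega}=0$; the uniformly positive reach hypothesis and Theorem~\ref{thm:regularity} furnish $\|w\|_{H^{2}(\Omega)}\le C\|u-u_s\|_{L^{2}(\Omega)}$. Reproducing the integration-by-parts and interpolation argument with $L_{w}$ in place of $L_{u-u_s}$, and combining with the gradient bound from Step~2, yields $\|u-u_s\|_{L^{2}(\Omega)}\le C|\triangle|^{2}\epsilon_{1}$.

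The main obstacle is Step~1: when $b^{i}$ and $c^{1}$ are nonzero the operator is not self-adjoint, and the lower-order contributions $b^{i}\partial_{i}u+c^{1}u$ generate extra terms that must be absorbed using the positivity condition $1-\sqrt{1-\epsilon}/A-C_{m}/A>0$ of Theorem~\ref{mjlai03302021}, exactly as in the norm-equivalence argument for $\|\cdot\|_{\mathcal{L}_{1}}$. Once $\|\Delta(u-u_s)\|_{L^{2}(\Omega)}\le C'\epsilon_{1}$ is established, Steps~2 and~3 are essentially verbatim repetitions of the corresponding portions of the proof of Theorem~\ref{mjlai05122021}, so no new ideas beyond the Cord\'es reduction are required.
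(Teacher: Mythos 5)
Your proposal is correct and follows exactly the route the paper intends: the paper's own ``proof'' of Theorem~\ref{mjlai05182021} is only the remark that it is similar to Theorem~\ref{mjlai05122021}, and your three steps are precisely that duality argument, with the one genuinely new ingredient (passing from $\|\mathcal{L}(u-u_s)\|_{L^2}\le\hat C\epsilon_1$ to $|u-u_s|_{H^2}\le C\epsilon_1$, hence $\|\Delta(u-u_s)\|_{L^2}\le C'\epsilon_1$, via the Cord\'es inequality and the norm equivalence of Theorems~\ref{mjlai03222021}/\ref{mjlai03302021}) correctly identified and handled. Steps 2 and 3 then repeat the integration-by-parts and Aubin--Nitsche arguments verbatim, as the paper indicates.
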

\begin{proof}
	The proof is similar to Theorem~\ref{mjlai05122021}. We leave the detail to the interested reader. 
\end{proof}

\section{Implementation of the Spline based Collocation Method}
Before we present our computational results for Poisson equation and general second order 
elliptic equations, 
let us first explain the  implementation of our spline based collocation method. 
We divide the implementation into two parts. The first part of the implementation is to 
construct the 
collocation matrices $K$ associated with the Poisson equation and $\mathcal{K}$ 
associated with the general second order PDE in the non-divergence form over  
triangulation/tetrahedralization, based the degree $D$ of spline functions and the smoothness $r\ge 1$ 
as well as  the domain points ${\cal D}_{D', \triangle}$ 
associated with the triangulation/tetrahedralization. 
This part also generates the smoothness matrix $H_r, H_0$.  More precisely, 
for the Poisson equation, we construct collocation matrices  
\begin{equation}
	\label{part1}
	MxxV:=[(B_{ijk}^t(\bfx)_{xx}|_{\bfx=\xi_\ell}, \xi_\ell\in {\cal D}_{D', \triangle}] 
	\hbox{ and } 
	MyyV:=[(B_{ijk}^t(\bfx)_{yy}|_{\bfx=\xi_\ell}, \xi_\ell \in {\cal D}_{D', \triangle}].
\end{equation}
In fact we  choose many other points which  are in addition to the domain points to build these 
$MxxV$ and $MyyV$ to get better accuracy. For example, we choose $D'=D+3$ to 
generate domain points. Then 
$K=MxxV+MyyV$ is a size of $M\times m$ for the Poisson equation, where 
$m=\dim(S^{-1}_D(\triangle))$ and $M=\dim(S^{-1}_{D'}(\triangle))$.  
After generating matrices, we save our matrices which will be used later 
for solution of the Poisson equation for various right-hand side functions and 
various boundary conditions.  

For the general elliptic equations, we also generate all the related matrices 
$MxxV$, $MxyV$, $MyyV$, $MxV, MyV, \cdots$ similar to the matrices 
$MxxV, MyyV$ for the Poisson equation.  Then 
we  generate the collocation matrix $\mathcal{K}$ associated with the PDE coefficients  at 
the same domain points 
from all the related matrices $MxxV, MxyV, MyyV, MxV, MyV, \cdots$ 
which are just generated before. This part is the most time {consuming} step. 
See Tables~\ref{tab1} and ~\ref{tab2} for the 2D and 3D settings. 

The second part, Part 2 is to construct the right-hand side vector $\bff$ for each given 
PDE problem and the matrix $B$ and vector $G$ associated with the boundary condition
and  use Algorithm~\ref{alg1}  to solve  the minimization problem (\ref{min1}) and (\ref{min3}). 
We shall use the  four different domains in 2D shown in Fig.~\ref{positivereach} 
and four different domains in 3D shown in Fig.~\ref{fig3D} to test the performance of 
our collocation method.  
In addition, the spline based collocation method has been tested over 
many more domains of interest. In particular, many domains which may not be of positive reach  are 
used for testing and  their numerical results can be found in \cite{L22}.

\begin{figure}[h]
\centering
	\begin{tabular}{cccc}
		\includegraphics[height=2.5cm]{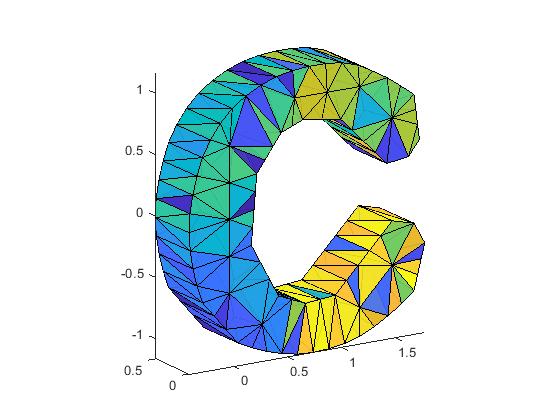}
		&\includegraphics[height=2.5cm]{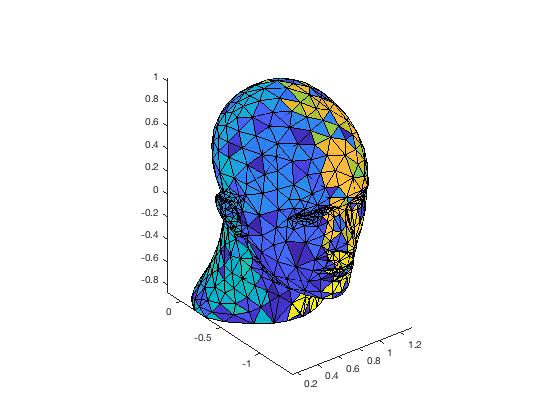}& 
		\includegraphics[height=2.5cm]{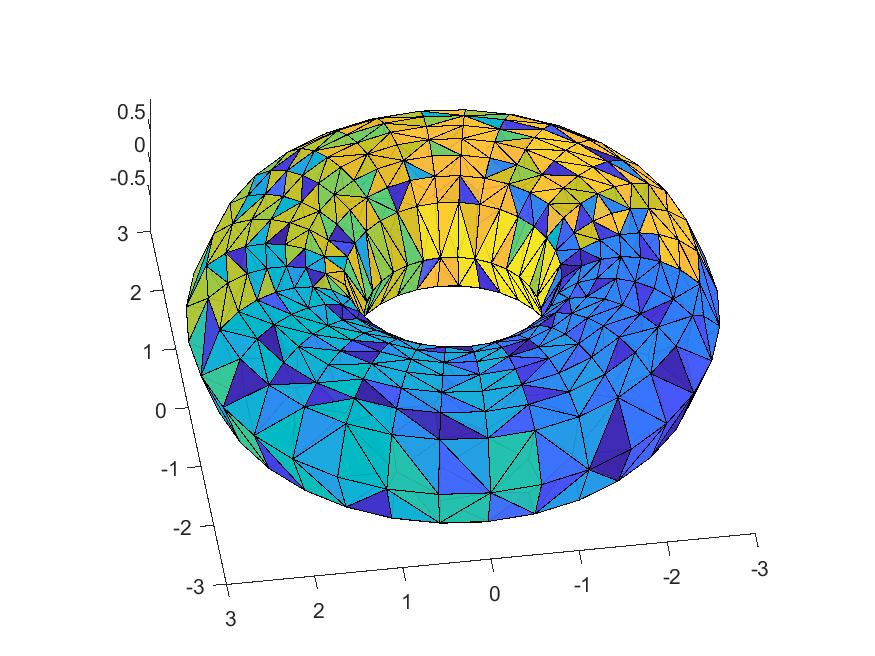} &
		\includegraphics[height=2.5cm]{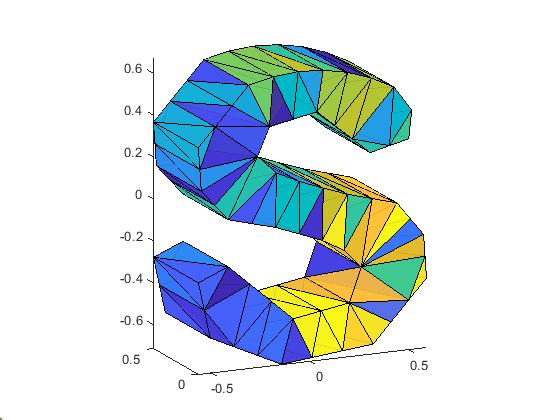}\cr 
	\end{tabular}
	\caption{Several 3D domains used for Numerical Experiments \label{fig3D}}
\end{figure}

In our computational experiments, we use a cluster computer at University of Georgia 
to generate the related collocation matrices for various degree of splines and domain points as 
described in Part I. 
We use multiple CPUs in the computer so that multiple operations 
can be done simultaneously.  
For the 2D case, we use 10 processors on a parallel computer equipped with a 12th Gen Intel(R) Core(TM) i7-12650H processor running at 2.30 GHz and 16.0 GB of installed RAM for both Part 1 and Part 2. 
And we also use a high memory (512GB) node from the Sapelo 2 cluster at University of Georgia, 
which has four AMD Opteron 6344 2.6 GHz processors. Using 48 processors on the UGA cluster, 
we can generate our necessary matrices and  
the computational times for Part 1 are listed in Table~\ref{tab1}. For 3D case, we use 48 
processors for Part 1 and 12 processors for Part 2  to do the computation.
Tables~\ref{tab2} and ~\ref{tab3} show the computational times for generating 
collocation matrices, where (P), (UGA P) indicates the time for  
the Poisson equation with 10 processors and 48 processors respectively 
and (G), (UGA G) for the general second order PDE using 10 processors 
and 48 processors, respectively.

\begin{table}
	\centering
	\scriptsize 
	\begin{tabular}{ c |c c c c c c c} 
		\hline
		Domains & Number of  & Number of & degree & Time &Time& Time &Time \\
		&vertices&triangles & &(P)&  (G)&(UGA P)&  (UGA G)\\
		\hline
Moon& 325& 531 &    8 & 0.48& 4.51    & 1.23& 1.80  \cr  
Flower& 297& 494 &    8 & 0.38& 2.47 & 0.28& 0.72      \cr  
Star& 231& 366 &    8 & 0.30& 1.49    & 0.30& 0.71  \cr  
Circle& 525& 895 &    8 & 0.85& 5.83  & 0.32& 1.86   \cr  
		\hline
	\end{tabular}
	\caption{Times in seconds for generating necessary matrices for each 2D domain in 
		Figure~\ref{positivereach}. \label{tab1}}
\end{table}

\begin{table}
	\centering
	\scriptsize 
	\begin{tabular}{ c |c c c c c c c c c} 
		\hline
		Domains & Number of  & Number of & Degree of & Time &Time &Time &Time \\
		&vertices& tetrahedron & splines &(P)&  (G)&(UGA P)&  (UGA G)\\
		\hline
			Letter C&190&431&9&5.9 & 69.0& 3.17&15.8  \cr 
Letter S&115&171&9&2.4 & 25.8&0.72&5.37 \cr 
Torus&773&2911&9&41.0 & 451.0& 8.39&82.0 \cr  
Human head&913&1588&9&21.9 & 243.3&4.53&44.7 \cr  
		\hline
	\end{tabular}
	\caption{Times in seconds for generating necessary matrices for each 3D domain in 
		Figure~\ref{fig3D}.
		\label{tab2}}
\end{table}

\begin{table}
	\centering
	\scriptsize 
	\begin{tabular}{ c |c c c c c c} 
		\hline
		Domain&Time &Time&Time &Time \\
		&(P)&  (SG)&(NSG1)&  (NSG2)\\
		\hline
		Letter C &3.71  & 6.04  & 6.07 & 5.88 \cr  
		Letter S &2.10  & 2.41  & 2.33  & 2.44 \cr  
		Torus &402.13  & 595.74 &  285.42 &  181.83 \cr   
		Human head &27.21 &  48.10&  48.96 &  48.96 \cr  
		\hline
	\end{tabular}
	\caption{Times in seconds for finding solutions of 3D Poisson equation(P), 
		general second order elliptic equation  with smooth PDE coefficients (SG) or with non-smooth 
		PDE coefficients (NSG1, NSG2) 
		for each domain in Figure~\ref{fig3D}.  \label{tab3}}
\end{table}

Another issue of our computation is how to choose $\alpha, \beta, $ and $\gamma$ 
in our Algorithm~\ref{alg1}. As there are many numerical solutions, we need to decide which 
one to choose. That is, if we 
are interested more in the accuracy of  numerical solutions  
than the smoothness of the spline  
solutions, we use $\alpha\ge 100$ while $\beta=\gamma=1$. 
On the other hand, if we are interested
more in the smoothness of spline solutions, e.g. in the computer aided geometric design, 
we use $\alpha=1$ while $\beta= \gamma\ge 10^5$ or $\beta=1$ and $\gamma=10^5$. 
Let us present Figure \ref{2derrvbeta} and \ref{2derrvalpha} to show these phenomena. 
The numerical results in Figure \ref{2derrvbeta} and \ref{2derrvalpha} are based on 
spline functions of degree $D=8$ and smoothness $r=2$ 
over one of four domains in Figure~\ref{positivereach}  for all the testing functions 
listed in the next subsection.  
In this sections, the errors are computed based on $NI=1001  \times 1001 $ equally-spaced points $\{(\eta_i)\}_{i=1}^{NI}$ fell inside the different domains.
Considered the errors have been calculated according to the norms
\begin{align*}
	\begin{cases}
		|u|_{l_2}&=\sqrt{\frac{\sum_{i=1}^{NI} (u(i))^2}{NI} }\cr
		|u|_{h_1}&=\sqrt{\frac{\sum_{i=1}^{NI} (u(i))^2+(u_x(i))^2+(u_y(i))^2}{NI} }\cr
		|u|_{l_\infty}&= \max |u(i)|,
	\end{cases}
\end{align*}
where $u(i):=u(\eta_i), u_x(i):=u_x(\eta_i), u_y(i):=u_y(\eta_i)$ for given functions $u, u_x, u_y .$
The rooted mean square(RMS) 
of vectors $ e_s=u - u_s$ 
and the maximum error of $e_s, H_0\bfc, H_r\bfc$, is computed based on those $1001^2$ equally-spaced points over the bounding box of the domain which fall into the domain.  
 When $\beta$ increases from 1 to $10^5$, the accuracies of the smoothness $|H_0 c|_{l_\infty}$ and $|H c|_{l_\infty}$ decrease, i.e., the smoothness relations can be enforced exactly. However,  
the errors $|e_s|_{l_2}$ and $|e_s|_{h}$ increase. 
Figure \ref{2derrvalpha} shows that we get the better numerical solutions when $\alpha=100>1=\beta=\gamma$
for some testing functions, but get a worse approximation for other testing functions.  Our method offers an advantage to have
a control for producing a more smooth looking, but less 
accurate numerical solution or a more accurate, but slightly 
bumpy solution. In this paper, we emphasize the accuracy of 
spline solutions when reporting our numerical results which 
can be compared with the standard FEM or DC methods.     
For the numerical experiments in the subsequent sections, we choose $\alpha=10^2, \beta=1$ and $\gamma=1$ to get the better $l_2, h_1$ errors.

\begin{figure}[htpb]
\centering
	\begin{tabular}{cc}
		\includegraphics[height=4cm]{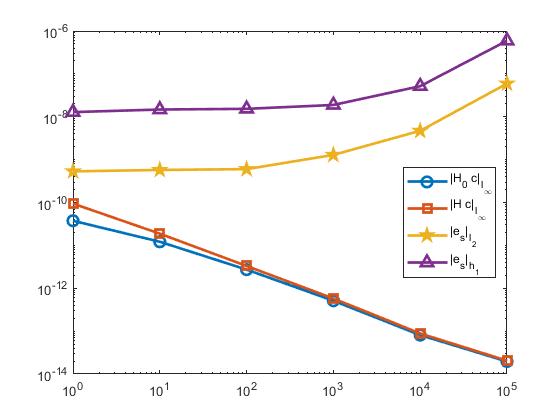}&	\includegraphics[height=4cm]{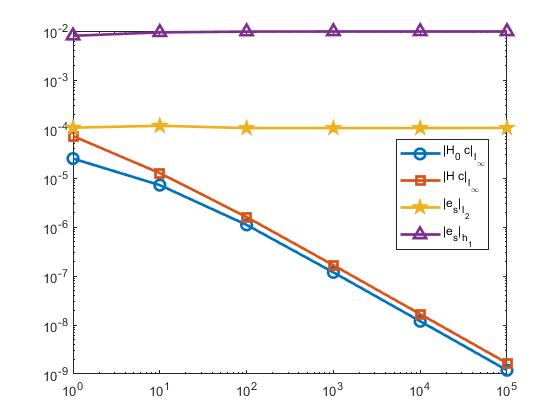}\cr
	\end{tabular}
\caption{The accuracies of the solutions $|e_s|_{l_2},|e_s|_{h}$ and the smoothness 
$|H_0 c|_{l_\infty},|H c|_{l_\infty}$  based on testing functions $ u^{s5}$(left) and 
$u^{s8}$(right) with $\alpha=\gamma=1$ for various $\beta$}\label{2derrvbeta}
\end{figure}
	\begin{figure}[htpb]
	\centering
	\begin{tabular}{cc}
		\includegraphics[height=4cm]{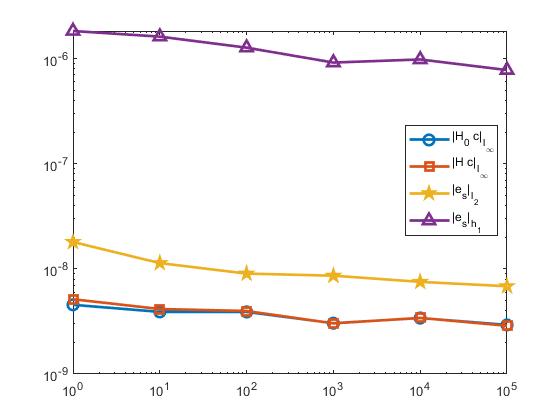}
		&\includegraphics[height=4cm]{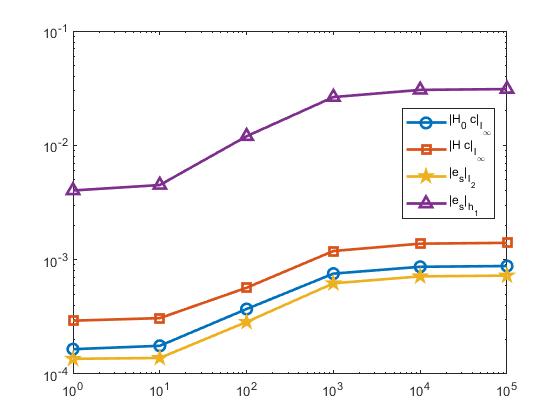}\cr
	\end{tabular}
	\caption{The accuracies of the solutions $|e_s|_{l_2},|e_s|_{h}$ and the smoothness 
$|H_0 c|_{l_\infty},|H c|_{l_\infty}$  based on testing functions $ u^{s5}$(left) and 
$u^{s8}$(right) with $\beta=\gamma=1$ for various $\alpha $}\label{2derrvalpha}
\end{figure}

\section{Numerical results for the Poisson Equation}
We shall present computational results for 2D Poisson equation and 3D Poisson equations 
separately in the following two subsections. In each section, we first present the computational
results from  the spline based collocation method to demonstrate the accuracy the method can 
achieve. Then we present a comparison of our collocation method with the numerical method proposed in 
\cite{ALW06} which uses multivariate splines to find the weak solution like finite element method.
For convenience, we shall call our spline based collocation method the LL method
and the numerical method in \cite{ALW06} the AWL method. 

\subsection{Numerical examples for 2D Poisson equations}
We have used various triangulations over various bounded domains to experiment the 
performance of our Algorithm ~\ref{alg1} in \cite{L22}  
and tested many solutions to the Poisson equation to see the accuracy that the LL method can do.
For convenience, we shall only present a few of the computational  results based on the domains
in Figure \ref{positivereach}.
The following is a list of 10 testing functions (8 smooth solutions and 2 not very smooth)
\begin{eqnarray*}
	u^{s1}&=& e^{\frac{(x^2+y^2)}{2}}, \cr
	u^{s2}&=&\cos(xy)+\cos(\pi (x^2+y^2)), \cr
	u^{s3}&=& \frac{1}{1+x^2+y^2}, \cr
	u^{s4}&=&\sin(\pi(x^2+y^2))+1,  \cr
	u^{s5}&=&\sin(3\pi x)\sin(3\pi y), \cr
	u^{s6}&=&\arctan(x^2-y^2), \cr
	u^{s7}&=&-\cos(x)\cos(y)e^{-(x-\pi)^2-(y-\pi)^2} \cr
	u^{s8}&=&\tanh(20y-20x^2)-\tanh(20x-20y^2), \cr
	u^{ns1}&=& |x^2+y^2|^{0.8} \hbox{ and }\cr 
	u^{ns2}&=&(xe^{1-|x|}-x)(ye^{1-|y|}-y).
\end{eqnarray*}
Note that the testing function in $u^{s8}$ is notoriously difficult to {solve}. 
One has to use a good adaptive triangulation method (cf. \cite{LM17}).  
The   rooted mean square (RMS) of $u-u_s$ and $\nabla (u- u_s)$ %
of approximate spline solution $u_s$ 
against the exact solution $u$ are given in Table~\ref{Poisson2D1}. 
These errors are computed based on $1001 \times 1001$ equally-spaced points of the bounding box of 
a domain in in Figure \ref{positivereach} which fell inside the domain. 
We chose collocation points to create $M\times m$ matrix $K$, where $m$ is the number of 
Bernstein basis functions (the dimension of spline space $S^{-1}_D(\triangle)$) 
and Algorithm~\ref{alg1} is used    to find the numerical solutions. 
\begin{table}[h]
	\centering
	\scriptsize 
	\begin{tabular}{ c |c c |c c| c c| c c} 
		\hline
		\multicolumn{1}{c|}{} &\multicolumn{2}{c|}{Moon}&\multicolumn{2}{c|}{Flower with a hole}
		&\multicolumn{2}{c|}{Star with 2 holes}&\multicolumn{2}{c}{Circle with 3 holes}  \\
		\hline
		Solution &$u-u_s$&$\nabla (u-u_s)$&$ u-u_s$&$\nabla (u- u_s)$&$u-u_s$ 
		&$\nabla (u-u_s)$&$u-u_s$&$\nabla (u- u_s)$\\ 
		\hline
		$u^{s1}$&6.95e-11 & 4.15e-10 & 1.23e-11 &1.54e-10&1.67e-12 &6.57e-11 & 1.63e-11 & 1.68e-10      \cr  
		$u^{s2}$&3.53e-10 & 4.81e-09 & 1.83e-11 &8.79e-10&2.46e-12 &9.77e-11 & 2.65e-11 & 2.55e-10      \cr  
		$u^{s3}$&2.58e-11 & 1.81e-10 & 6.96e-12 &9.48e-11&1.48e-12 &5.66e-11 & 8.03e-12 & 8.73e-11      \cr  
		$u^{s4}$&2.53e-10 & 3.57e-09 & 2.19e-11 &6.80e-10&1.45e-12 &8.41e-11 & 1.92e-11 & 2.00e-10      \cr  
		$u^{s5}$&6.16e-08 & 1.44e-06 & 7.73e-09 &2.57e-07&3.02e-10 &1.36e-08 & 5.33e-10 & 1.87e-08      \cr  
		$u^{s6}$&1.75e-11 & 2.86e-10 & 3.23e-12 &8.71e-11&2.97e-13 &7.23e-12 & 7.51e-12 & 7.85e-11      \cr  
		$u^{s7}$&3.07e-12 & 2.27e-11 & 1.15e-12 &1.51e-11&2.81e-13 &6.69e-12 & 1.10e-12 & 1.28e-11      \cr  
		$u^{s8}$&1.06e-03 & 9.32e-02 & 8.65e-04 &8.38e-02&4.84e-05 &3.36e-03 & 5.21e-04 & 2.09e-02      \cr  
		$u^{ns1}$&7.31e-10 & 3.68e-08 & 5.18e-06 &4.94e-04 &2.62e-06 &3.89e-04 & 1.80e-05 & 3.22e-04 \cr  
		$u^{ns2}$&3.16e-04 & 2.61e-03 & 7.39e-05 &1.51e-03 &2.75e-05 &9.76e-04 & 1.91e-05 & 6.25e-04 \cr  
		\hline
	\end{tabular}
	\caption{The RMS of errors $u- u_s$ and $\nabla u-\nabla u_s$ 
		for Poisson equations for
		four domains showed in Figure~\ref{positivereach} when  $r=2$ and $D=8$. \label{Poisson2D1}}
\end{table}

From Table~\ref{Poisson2D1}, we can see that the performance of our method is excellent. 
Next let us compare with 
the numerical method in \cite{ALW06} for the same degree, the same smoothness, and the same triangulation. The
comparison results are shown in Table~\ref{Poisson2D2}.  
One can see that both methods perform very well. Our method
can achieve a better accuracy due to the reason the 
more number of collocation points is used than 
the dimension of spline space $S^{-1}_D(\triangle)$.  

\begin{table}
	\centering
	\scriptsize 
	\begin{tabular}{ c |c c| c c |c c| c c } 
		\hline
		\multicolumn{1}{c|}{} &\multicolumn{2}{c|}{Moon}&\multicolumn{2}{c|}{Flower with a hole}&\multicolumn{2}{c|}
		{Star with 2 holes}&\multicolumn{2}{c}{Circle with 3 holes}  \\
		\hline
		Sol'n &\multicolumn{1}{c}{AWL}&\multicolumn{1}{c|}{LL}&\multicolumn{1}{c}{AWL}&\multicolumn{1}{c|}
		{LL}&\multicolumn{1}{c}{AWL}&\multicolumn{1}{c|}{LL}&\multicolumn{1}{c}{AWL}&\multicolumn{1}{c}{LL}\\
		\hline
		$u^{s1}$&1.51e-07 & 6.95e-11 & 1.14e-07 &1.23e-11&2.08e-07 &1.67e-12 & 5.22e-08 & 1.63e-11      \cr  
		$u^{s2}$&1.33e-07 & 3.53e-10 & 3.79e-07 &1.83e-11&8.93e-07 &2.46e-12 & 2.35e-08 & 2.65e-11      \cr  
		$u^{s3}$&4.94e-08 & 2.58e-11 & 8.07e-08 &6.96e-12&1.44e-07 &1.48e-12 & 1.62e-08 & 8.03e-12      \cr  
		$u^{s4}$&5.77e-07 & 2.53e-10 & 3.89e-07 &2.19e-11&4.51e-07 &1.45e-12 & 2.02e-07 & 1.92e-11      \cr  
		$u^{s5}$&1.58e-06 & 6.16e-08 & 1.43e-06 &7.73e-09&1.67e-06 &3.02e-10 & 2.65e-07 & 5.33e-10      \cr  
		$u^{s6}$&5.00e-07 & 1.75e-11 & 1.44e-07 &3.23e-12&4.03e-07 &2.97e-13 & 9.47e-08 & 7.51e-12      \cr  
		$u^{s7}$&1.99e-08 & 3.07e-12 & 2.20e-08 &1.15e-12&3.30e-08 &2.81e-13 & 4.97e-09 & 1.10e-12      \cr  
		$u^{s8}$&1.31e-03 & 1.06e-03 & 1.19e-03 &8.65e-04&1.49e-04 &4.84e-05 & 7.96e-04 & 5.21e-04      \cr  
		$u^{ns1}$&1.50e-07 & 7.31e-10 & 2.39e-04 &5.18e-06 &2.26e-05 &2.62e-06 & 1.43e-05 & 1.80e-05 \cr  
		$u^{ns2}$&1.38e-03 & 3.16e-04 & 4.55e-04 &7.39e-05 &9.87e-05 &2.75e-05 & 8.57e-05 & 1.91e-05 \cr  
		\hline
	\end{tabular}
	\caption{RMSE of spline solutions for the Poisson equation 
		over the four domains in Figure~\ref{positivereach} when  
		$r=2$ and $D=8$ for both the AWL method and the LL method. 
		\label{Poisson2D2}}
\end{table}

Finally, we summarize the computational times for both methods in Table~\ref{Poisson2D3}.  
One can see the LL method
can be more efficient if the collocation matrices are already generated.  The LL method can be 
useful for time dependent
PDE such as the heat equation. We only need to generate the collocation matrix once and use it 
repeatedly for many time step iterations.  

\begin{table}
	\centering
	\scriptsize 
	\begin{tabular}{ c |c c l l } 
		\hline
		Domain& Number of  & Number of & Average time &Average time for \\
		&vertices&triangles & for AWL method& LL method (part 2)\\
		\hline
		Moon & 325 & 531 & 9.61e-01 &6.28e-01    \cr  
		Flower with a hole& 297 & 494 & 8.05e-01 &5.39e-01    \cr  
		Star with 2 holes & 231 & 366 & 5.53e-01 &3.97e-01    \cr  
		Circle with 3 holes & 525 & 895 & 1.44e+00 &9.74e-01    \cr  
		\hline
	\end{tabular}
	\caption{The number of vertices, triangles and the averaged time for solving the 2D Poisson equation 
		for each domain in Figure~\ref{positivereach}. \label{Poisson2D3}}
\end{table}

\subsection{Numerical results for the 3D Poisson equation}
We have used our collocation method to solve the 3D Poisson equation and 
the tested 10 smooth and non-smooth solution over various domains. 
For convenience, we only show a few computational results to demonstrate that
our collocation method works very well.  More detail can be found in \cite{L22}.
Our testing  solutions are as follows:
\begin{eqnarray*} 
	u^{3ds1}&=&\sin(2x+2y)\tanh(\frac{xz}{2})\cr 
	u^{3ds2}&=& e^{\frac{x^2+y^2+z^2}{2}}\cr  
	u^{3ds3}&=& \cos(xyz)+\cos(\pi(x^2+y^2+z^2))\cr
	u^{3ds4}&=&\frac{1}{1+x^2+y^2+z^2}\cr
	u^{3ds5}&=& \sin(\pi(x^2+y^2+z^2))+1\cr
	u^{3ds6}&=& 10e^{-x^2-y^2-z^2}\cr 
	u^{3ds7}&=& \sin(2\pi x)\sin(2\pi y)\sin(2\pi z)\cr
	u^{3ds8}&=& z\tanh((-\sin(x)+y^2))\cr 
	u^{3dns1}&=& |x^2+y^2+z^2|^{0.8}\cr 
	u^{3dns2}&=& (xe^{1-|x|}-x)(ye^{1-|y|}-y)(ze^{1-|z|}-z).
\end{eqnarray*}

The rooted mean squared errors %
of approximate spline solutions against the exact solution are computed based on 
$501 \times 501\times 501$ 
equally-spaced points of the bounding box of a domain  shown in Figure \ref{fig3D} which fall into the domain. 
\begin{table}
	\centering
	\scriptsize 
	\begin{tabular}{ c |c c |c c| c c |c c} 
		\hline
		\multicolumn{1}{c|}{} &\multicolumn{2}{c|}{Letter C}&\multicolumn{2}{c|}{Letter S}&\multicolumn{2}{c|}{Torus}&\multicolumn{2}{c}{Human head}  \\
		\hline
		Solution &$u-u_s$&$\nabla (u-u_s)$&$u-u_s$&$\nabla (u- u_s)$&$u-u_s$&$\nabla (u- u_s)$
		&$u-u_s$&$\nabla (u-u_s)$\\ 
		\hline
		$u^{3ds1}$&2.31e-11 & 2.52e-10 & 3.01e-12 &4.58e-11&7.87e-11 & 1.40e-09 & 4.12e-10 &5.02e-09     \cr  
		$u^{3ds2}$&5.47e-10 & 4.86e-09 & 7.53e-12 &7.31e-11&4.52e-09 & 3.24e-08 & 1.66e-08 &1.29e-07     \cr  
		$u^{3ds3}$&5.49e-07 & 8.40e-06 & 8.87e-08 &7.80e-07&3.32e-09 & 3.21e-08 & 9.96e-06 &1.65e-04     \cr  
		$u^{3ds4}$&4.83e-09 & 5.09e-08 & 4.29e-09 &3.85e-08&2.16e-09 & 1.61e-08 & 1.13e-08 &2.21e-07     \cr  
		$u^{3ds5}$&6.49e-07 & 1.67e-05 & 1.17e-07 &9.47e-07&7.07e-09 & 5.78e-08 & 3.62e-06 &5.88e-05     \cr  
		$u^{3ds6}$&3.52e-09 & 3.99e-08 & 8.39e-10 &6.53e-09&2.03e-08 & 1.72e-07 & 6.69e-08 &6.90e-07     \cr  
		$u^{3ds7}$&9.14e-06 & 8.63e-05 & 3.20e-06 &2.44e-05&1.40e-07 & 4.75e-06 & 4.31e-05 &8.24e-04     \cr  
		$u^{3ds8}$&2.05e-08 & 2.79e-07 & 3.30e-09 &3.35e-08&1.76e-10 & 2.98e-09 & 1.90e-08 &3.94e-07     \cr  
		$u^{3dns1}$&8.80e-06 & 4.66e-04 & 3.17e-05 &1.14e-03 &2.23e-09 & 1.80e-08 & 8.28e-06 &2.18e-03\cr  
		$u^{3dns2}$&8.39e-05 & 1.20e-03 & 4.30e-05 &4.65e-04 &1.20e-04 & 2.49e-03 & 8.90e-04 &5.18e-02\cr  
		\hline
	\end{tabular}
	\caption{RMS of error vectors $u-u_s$ and $\nabla u-\nabla u_s$ for the 3D Poisson equation 
		over the four domains in Figure~\ref{fig3D} based on trivariate spline functions 
		of smoothness $r=1$ and degree $D=9$ \label{Poisson3D1}}
\end{table}

We choose collocation points to create $M\times m$ matrix $K$, 
where $m$ is  the dimension of spline space
$S^{-1}_D(\triangle)$ and apply Algorithm~\ref{alg1} to find the numerical solutions.
We tested 10 functions over the domains in Figure~\ref{fig3D}. Their 
root mean square errors are presented in Table~\ref{Poisson3D1}. 
We also compare the AWL method with LL method for the numerical solution of the 3D Poisson 
equation. See numerical results in Table~\ref{Poisson3D21} which show that the LL method is 
more accurate than the AWL method when the solutions are smooth 
and is similar to the AWL method when the solutions are not very smooth.   
\begin{table}
	\centering
	\scriptsize 
	\begin{tabular}{ c |c c| c c |c c| c c } 
		\hline
		\multicolumn{1}{c|}{} &\multicolumn{4}{c|}{Torus}&\multicolumn{4}{c}{Human head} \\
		\hline
		&\multicolumn{2}{c|}{AWL}&\multicolumn{2}{c|}{LL}&\multicolumn{2}{c|}{AWL}&\multicolumn{2}{c}{LL}\\
		\hline
		Solution &$u-u_s$&$\nabla(u-u_s)$ &$u-u_s$&$\nabla(u-u_s)$
		&$u-u_s$&$\nabla(u-u_s)$&$u-u_s$&$\nabla(u-u_s)$\\ 
		\hline
		$u^{3ds1}$&3.55e-09 & 5.74e-07 & 1.79e-10 &2.04e-09&2.83e-09 & 7.56e-07 & 5.83e-12 &6.45e-11   \cr  
		$u^{3ds2}$&2.92e-08 & 1.98e-06 & 1.14e-08 &8.50e-08 &5.21e-07 &2.72e-06 & 3.45e-10 &2.95e-09   \cr  
		$u^{3ds3}$&1.07e-07 & 8.90e-06 & 5.34e-09 &3.31e-08 &6.44e-08 & 1.21e-05 & 7.26e-10 &8.21e-09   \cr  
		$u^{3ds4}$&1.88e-08 & 1.46e-06 & 3.57e-09 &2.29e-08 &1.83e-08 & 2.72e-06 & 2.68e-10 &2.76e-09   \cr  
		$u^{3ds5}$&8.25e-08 & 5.50e-06 & 1.33e-08 &8.95e-08 &6.09e-08 & 8.43e-06 & 9.75e-10 &5.78e-09   \cr  
		$u^{3ds6}$&2.50e-07 & 1.80e-05 & 3.39e-08 &1.90e-07 &1.31e-07 & 1.35e-05 & 2.35e-09 &2.47e-08   \cr  
		$u^{3ds7}$&8.07e-08 & 5.83e-06 & 1.01e-07 &2.34e-06 &1.88e-08 & 2.72e-06 & 4.19e-08 &5.21e-07   \cr  
		$u^{3ds8}$&8.16e-09 & 7.24e-07 & 6.42e-10 &4.32e-09 &8.16e-09 & 3.41e-07 & 2.69e-11 &1.66e-10   \cr  
		$u^{3dns1}$&3.92e-08 & 2.67e-06 & 5.07e-09 &3.22e-08 &3.63e-08 & 2.67e-06 & 3.82e-06 &6.23e-04   \cr  
		$u^{3dns2}$&6.30e-04 & 2.29e-03 & 1.09e-04 &1.58e-03 &3.42e-04 & 2.49e-03 & 2.30e-04 &4.84e-03   \cr  
		\hline
	\end{tabular}
	\caption{The RMSE of spline solutions for the 3D Poisson equation
		over the two domains in Figure~\ref{fig3D} based on trivariate spline functions 
		of smoothness $r=1$ and degree $D=9$ for the AWL method and LL method. \label{Poisson3D21}}
\end{table}

\section{Numerical Results for General Second Order Elliptic PDE}
We shall present computational results for 2D and 3D general second order PDEs 
separately in the following two subsections. In each section, we first present the computational
results from  the spline based collocation method to demonstrate the accuracy the method can 
achieve. Then we present a comparison of our collocation method with the numerical method based
on \cite{LW17}. For convenience, we shall call our spline based collocation method the LL method
and the numerical method in \cite{LW17} the LW method. 

\subsection{Numerical examples for 2D general second order equations}
We have used the same triangulations over various bounded domains as shown in Figure 
\ref{positivereach}
and tested the same solutions which we used for the Poisson equation for the general second order equation 
to see the accuracy that the LL method can have. 
The root mean squared error(RMSE) $u-u_s$ and $\nabla u-\nabla u_s$
of approximate spline solutions $u_s, \nabla u_s$ against the exact solutions 
$u, \nabla u$ are given in Tables in this section. 
The RMSE are computed based on $1001 \times 1001$ equally-spaced points of the bounding box of a domain 
in Figure \ref{positivereach} which fell inside the domain.  
We chose additional collocation points to create $M\times m$ matrix $\mathcal{K}$, 
where $m, M$ are  the dimension of spline space $S^{-1}_D(\triangle)$ and 
$S^{-1}_{D'}(\triangle)$, respectively.  

\subsubsection{2D general second order equations with smooth coefficients}
\begin{example}
	\label{ex0}
	We first tested our computational method to solve 
	the 2nd order elliptic equation with smooth PDE coefficients:   
	$a_{11}= x^2 +y^2, a_{12}=\cos(xy), a_{21}=e^{xy}, 
	a_{22}=x^3 +y^2-\sin(x^2+y^2), b_1=3\cos(x)y^2, 
	b_2=e^{-x^2-y^2}, c=0$.    
	Our testing functions are 2 non-smooth solutions $u^{ns1},u^{ns2}$, and 8 smooth solutions 
	$u^{s1}$ --- $u^{s8}$ given in the previous section. 
	The RMS of error vectors $u-u_s$ and $\nabla(u-u_s)$ 
	over the four domains in Figure \ref{positivereach} 
	is presented in Table~\ref{tab7}.  
	The numerical results show that the LL method works very well.
	\begin{table}[h]
		\centering
		\scriptsize 
		\begin{tabular}{ c |c c |c c| c c| c c} 
			\hline
			\multicolumn{1}{c|}{} &\multicolumn{2}{c|}{Moon}&\multicolumn{2}{c|}{Flower with a hole}
			&\multicolumn{2}{c|}{Star with 2 holes}&\multicolumn{2}{c}{Circle with 3 holes}  \\
			\hline
			Solution &$u-u_s$&$\nabla (u- u_s)$&$u-u_s$&$\nabla (u- u_s)$&$u-u_s$&$\nabla (u- u_s)$&$u-u_s$&$\nabla (u- u_s)$\\ 
			\hline
			$u^{s1}$&3.11e-10 & 6.25e-09 & 1.63e-10 &5.62e-09&4.96e-11 &1.93e-09 & 4.05e-10 & 1.06e-08        \cr  
			$u^{s2}$&7.86e-10 & 1.51e-08 & 6.95e-10 &2.98e-08&1.33e-10 &4.04e-09 & 3.27e-10 & 1.18e-08        \cr  
			$u^{s3}$&2.90e-10 & 4.12e-09 & 1.72e-10 &4.77e-09&4.07e-11 &1.60e-09 & 1.78e-10 & 6.25e-09        \cr  
			$u^{s4}$&4.79e-10 & 1.51e-08 & 4.38e-10 &1.59e-08&5.33e-11 &2.41e-09 & 5.05e-10 & 1.26e-08        \cr  
			$u^{s5}$&5.35e-08 & 3.40e-06 & 5.90e-08 &2.58e-06&8.84e-10 &6.81e-08 & 3.04e-09 & 1.93e-07        \cr  
			$u^{s6}$&1.24e-10 & 2.52e-09 & 4.19e-11 &1.83e-09&1.11e-11 &3.56e-10 & 1.29e-10 & 2.44e-09        \cr  
			$u^{s7}$&2.65e-11 & 4.32e-10 & 3.02e-11 &1.40e-09&7.06e-12 &3.07e-10 & 5.81e-11 & 1.23e-09        \cr  
			$u^{s8}$&9.04e-03 & 2.61e-01 & 9.81e-03 &3.63e-01&2.50e-04 &1.35e-02 & 2.28e-03 & 1.29e-01        \cr  
			$u^{ns1}$&8.26e-10 & 6.54e-08 & 4.62e-06 &6.85e-04 &1.69e-06 &4.87e-04 & 5.78e-05 & 6.17e-03   \cr  
			$u^{ns2}$&2.01e-04 & 3.24e-03 & 2.97e-04 &6.88e-03 &1.27e-04 &5.80e-03 & 7.33e-05 & 2.84e-03   \cr  
			\hline
		\end{tabular}
		\caption{RMSE of spline solutions for general second order elliptic equations with smooth coefficients over the four domains in Figure~\ref{positivereach} 
			when  $r=2$ and $D=8$. \label{tab7}}
	\end{table}
\end{example}

\subsubsection{2D general second order equations with non-smooth coefficients}
\begin{example}
	\label{ex1}
	In  \cite{SS13}, the researchers experimented their numerical methods 
	for  the second order PDE as follows:
	\begin{eqnarray*}
		\sum^2_{i,j=1}(1+\delta_{ij}) \frac{x_i}{|x_i|}\frac{x_j}{|x_j|} u_{x_i x_j} =f ~~ 
		\hbox{ in } \Omega,\quad 
		u=0 ~on ~\partial \Omega,
	\end{eqnarray*}
	where $\Omega =(-1,1)^2$ and the solution $u$ is $u(x,y)=(xe^{1-|x|}-x)(y e^{1-|y|}-y)$
	which is one of our testing functions.  
	It is easy to see those coefficients satisfy the Cordes condition 
	\begin{eqnarray*}
		\frac{\sum_{i,j=1}^d (a_{i,j})^2}{(\sum_{i=1}^2 a_{ii})^2}=\frac{2^2+1+1+2^2}{(2+2)^2}=\frac{10}{16}\le \frac{1}{2-1+\epsilon}
	\end{eqnarray*}
	when $\epsilon =\frac{3}{5}$. This equation was also numerically experimented 
	in \cite{LW17} and \cite{WW19}.  
	
	Let us test our method on this 2nd order elliptic equation with non-smooth coefficients 
	for the 2 non-smooth solutions $u^{ns1},u^{ns2}$, and 8 smooth solutions $u^{s1}-u^{s8}$ 
	over the four domains used in the previous section.  
	We use bivariate splines of degree $D=8$ and smoothness $r=2$ for the experiment.
	And the  RMSE of the solutions for the  four domains in Figure \ref{positivereach} 
	are reported in Table~\ref{tab10}.  It is clear to see that our method works very well. 
	
	\begin{table}
		\scriptsize 
		\begin{tabular}{ c |c c |c c| c c| c c} 
			\hline
			\multicolumn{1}{c|}{} &\multicolumn{2}{c|}{Moon}&\multicolumn{2}{c|}{Flower with a hole}
			&\multicolumn{2}{c|}{Star with 2 holes}&\multicolumn{2}{c}{Circle with 3 holes}  \\
			\hline
			Solution &$u-u_s$&$\nabla (u- u_s)$&$u-u_s$&$\nabla (u- u_s)$&$u-u_s$&$\nabla (u- u_s)$
			&$u-u_s$&$\nabla (u- u_s)$\\ 
			\hline
			$u^{s1}$&1.36e-10 & 1.24e-09 & 8.79e-11 &1.34e-09&2.85e-11 & 3.73e-09 & 1.03e-11 &1.14e-10     \cr  
			$u^{s2}$&1.95e-10 & 2.59e-09 & 1.16e-10 &2.15e-09&2.97e-11 & 2.02e-09 & 1.66e-11 &1.73e-10     \cr  
			$u^{s3}$&5.20e-11 & 4.91e-10 & 5.21e-11 &8.99e-10&1.52e-11 & 1.07e-09 & 5.20e-12 &5.85e-11     \cr  
			$u^{s4}$&2.16e-10 & 2.46e-09 & 9.81e-11 &1.83e-09&2.68e-11 & 2.06e-09 & 1.61e-11 &1.82e-10     \cr  
			$u^{s5}$&6.26e-08 & 1.27e-06 & 1.33e-08 &3.24e-07&5.04e-10 & 2.02e-08 & 7.58e-10 &1.64e-08     \cr  
			$u^{s6}$&3.92e-11 & 4.46e-10 & 1.61e-11 &2.63e-10&4.77e-12 & 2.07e-10 & 3.25e-12 &3.81e-11     \cr  
			$u^{s7}$&3.43e-12 & 3.26e-11 & 1.26e-11 &2.00e-10&2.81e-12 & 2.45e-10 & 1.22e-12 &1.20e-11     \cr  
			$u^{s8}$&1.44e-03 & 9.95e-02 & 2.86e-03 &1.20e-01&1.11e-04 & 4.07e-03 & 1.87e-04 &1.67e-02     \cr  
			$u^{ns1}$&2.00e-09 & 5.73e-08 & 1.57e-04 &3.88e-03 &2.59e-04 & 4.30e-03 & 1.50e-05 &5.31e-04\cr  
			$u^{ns2}$&1.60e-03 & 1.62e-02 & 1.03e-03 &1.73e-02 &8.84e-04 & 1.61e-02 & 2.56e-04 &4.03e-03\cr  
			\hline
		\end{tabular}
		\caption{RMSE   $u-u_s$ and $\nabla u-\nabla u_s$ 
			for the general elliptic equation with the non-smooth coefficients 
			in Example~\ref{ex1} over the four domains in Figure~\ref{positivereach} 
			when  $r=2$ and $D=8$. \label{tab10}}
	\end{table}
\end{example}

\begin{example}
	\label{ex2}
	The second example in the paper \cite{SS13} is another second order PDE:
	\begin{eqnarray*}
		\sum^2_{i,j=1}(\delta_{ij}+ \frac{x_i x_j}{|x|^2}) u_{x_i x_j} =f ~~ in ~ \Omega,\quad u=0 ~on ~\partial \Omega,
	\end{eqnarray*}
	where $\Omega =(-1,1)^2$ and the 
	solution $u$ is $u(x,y)=|x^2+y^2|^{\frac{\alpha}{2}}$ which is on the list of our
	testing functions.  
	Then those coefficients satisfy the Cordes condition when $\epsilon =\frac{4}{5}$.

	\begin{table}[h]
		\centering
		\scriptsize 
		\begin{tabular}{ c |c c |c c| c c| c c} 
			\hline
			\multicolumn{1}{c|}{} &\multicolumn{2}{c|}{Moon}&\multicolumn{2}{c|}{Flower with a hole}
			&\multicolumn{2}{c|}{Star with 2 holes}&\multicolumn{2}{c}{Circle with 3 holes}  \\
			\hline
			Solution &$u-u_s$&$\nabla (u- u_s)$&$u-u_s$&$\nabla (u- u_s)$
			&$u-u_s$&$\nabla (u- u_s)$&$u-u_s$&$\nabla (u- u_s)$\\ 
			\hline
			$u^{s1}$&2.20e-11 & 1.81e-10 & 2.26e-11 &4.32e-10&1.04e-11 & 2.38e-09 & 6.64e-12 &7.61e-11     \cr  
			$u^{s2}$&1.64e-10 & 2.67e-09 & 2.52e-11 &1.03e-09&1.50e-11 & 2.49e-09 & 7.34e-12 &1.04e-10     \cr  
			$u^{s3}$&1.41e-11 & 1.08e-10 & 1.72e-11 &3.33e-10&9.64e-12 & 1.80e-09 & 4.03e-12 &4.67e-11     \cr  
			$u^{s4}$&1.75e-10 & 2.14e-09 & 4.16e-11 &9.44e-10&1.80e-11 & 3.92e-09 & 9.03e-12 &1.22e-10     \cr  
			$u^{s5}$&3.71e-08 & 8.57e-07 & 6.13e-09 &2.01e-07&3.78e-10 & 1.58e-08 & 5.95e-10 &1.35e-08     \cr  
			$u^{s6}$&5.70e-12 & 2.31e-10 & 6.56e-12 &1.31e-10&1.33e-12 & 2.73e-10 & 1.77e-12 &2.38e-11     \cr  
			$u^{s7}$&1.42e-12 & 1.21e-11 & 3.23e-12 &6.56e-11&1.24e-12 & 2.63e-10 & 4.61e-13 &5.59e-12     \cr  
			$u^{s8}$&1.15e-03 & 8.63e-02 & 2.11e-03 &8.77e-02&5.51e-05 & 3.41e-03 & 1.46e-04 &1.61e-02     \cr  
			$u^{ns1}$&3.58e-10 & 4.08e-08 & 1.31e-04 &3.45e-03 &2.11e-04 & 4.12e-03 & 3.26e-05 &1.01e-03\cr  
			$u^{ns2}$&1.95e-04 & 1.97e-03 & 5.78e-05 &1.35e-03 &2.73e-05 & 9.06e-04 & 1.60e-05 &5.25e-04\cr  
			\hline
		\end{tabular}
		\caption{The RMS of vectors $u-u_s, \nabla u-\nabla u_s$ for general elliptic equations 
			with non-smooth coefficients in Example~\ref{ex2} over the four domains when  $r=2$ and $D=8$. 
			\label{tab12}}
	\end{table}

	Similar to Example~\ref{ex1}, 
	we use the LL method to solve the PDE above using the 10 testing functions  
	based on bivariate splines of degree $D=8$ and  smoothness $r=2$.
	See  Table~\ref{tab12} for the RMS of  error vectors.  
\end{example}

\subsection{Comparison with Numerical Method in \cite{LW17}}
We first compare  our LL method with the LW method in \cite{LW17} when numerically solving
three PDEs given in Examples~\ref{ex0}, \ref{ex1}, and \ref{ex2}. The RMSEs from the two methods
will be reported in Table~\ref{tabllwll2}.  For simplicity, 
we only present the numerical results from the two computational methods over 
the Circle with 3 holes in Table~\ref{tabllwll2}.  
We get the similar results for other 2D domains in Figure \ref{positivereach}. 
From Table~\ref{tabllwll2}, we see that the LL method produces more accurate results.

\begin{table}[h]
	\centering
	\scriptsize 
	\begin{tabular}{ c |c c| c c |c c} 
		\hline
		
		\multicolumn{1}{c|}{} &\multicolumn{2}{c|}{PDE in Example~\ref{ex0}}&\multicolumn{2}{c|}{PDE in Example~\ref{ex1}}&\multicolumn{2}{c}{PDE in Example~\ref{ex2}} \\
		\hline
		Method&\multicolumn{1}{c}{LW}&\multicolumn{1}{c|}{LL}&\multicolumn{1}{c}{LW}&\multicolumn{1}{c|}{LL}&\multicolumn{1}{c}{LW}&\multicolumn{1}{c}{LL}\\
		\hline
		$u^{s1}$&2.01e-09 & 1.49e-10 & 2.15e-06 &1.03e-11&7.47e-09 & 6.64e-12      \cr  
		$u^{s2}$&2.22e-08 & 4.31e-11 & 2.97e-05 &1.66e-11&3.86e-08 & 7.34e-12      \cr  
		$u^{s3}$&1.70e-09 & 8.85e-11 & 4.96e-06 &5.20e-12&2.97e-09 & 4.03e-12      \cr  
		$u^{s4}$&2.29e-08 & 2.12e-10 & 6.13e-05 &1.61e-11&7.66e-08 & 9.03e-12      \cr  
		$u^{s5}$&8.24e-08 & 3.37e-09 & 4.19e-04 &7.58e-10&1.20e-06 & 5.95e-10      \cr  
		$u^{s6}$&2.63e-09 & 3.72e-11 & 4.11e-06 &3.25e-12&3.15e-09 & 1.77e-12      \cr  
		$u^{s7}$&3.06e-14 & 1.05e-11 & 2.10e-11 &1.22e-12&2.66e-14 & 4.61e-13      \cr  
		$u^{s8}$&8.50e-04 & 2.26e-03 & 2.54e-03 &1.87e-04&1.78e-04 & 1.46e-04      \cr  
		$u^{ns1}$&1.35e-05 & 4.83e-05 & 3.57e-05 &1.50e-05 &1.52e-05 & 3.26e-05 \cr  
		$u^{ns2}$&2.45e-04 & 4.56e-05 & 1.60e-04 &2.56e-04 &5.92e-05 & 1.60e-05 \cr  
		\hline
	\end{tabular}
	\caption{The RMSE of spline solutions for general elliptic equations in Example~\ref{ex0}, 
		PDE with non-smooth coefficients in Example~\ref{ex1} and in Example~\ref{ex2} 
		over the Circle with 3 holes when  
		$r=2$ and $D=8$ for the LW method and the LL method, respectively. \label{tabllwll2}}
\end{table}

Next Table~\ref{tabllwll1} shows the averaged computational time for the LL method is 
shorter than the LW method.
\begin{table}[h]
	\centering
	\scriptsize 
	\begin{tabular}{ c |c c c c } 
		\hline
		Domain& Number of  & Number of & Average time &Average time \\
		&vertices&triangles & for LW method&for Part 2 of LL method\\
		\hline
		Flower with a hole &297  & 494 & 1.3236e+02&3.521e-01\cr
		Circle with 3 holes &525 &  895 &4.4387e+03&8.313e-01\cr
		\hline
	\end{tabular}
	\caption{The number of vertices, triangles and the averaged time in seconds 
		for solving 2D general second order equations over the four domains in 
		Figure~\ref{positivereach} by the LW and LL methods. \label{tabllwll1}}
\end{table}

Combining  the computational results in Table~\ref{tabllwll1} and computational times
in Table~\ref{tabllwll2}, we conclude that the LL method is more effective and efficient than 
the  LW method.

\section{The Rate of Convergence of the LL method} 
Finally, we discuss the rate of convergence of the LL method. 
First  in Example~\ref{rates2D}, 
we conduct an experiment on the rate of convergence based on numerical solutions of 
the 2D general elliptic PDEs in Example \ref{ex0} over $[0, 1]^2$.  
The rate of convergence with respect to the size $h=|\triangle|$ of triangulation $\triangle$ 
is shown in Figure~\ref{2dratevsh}.
In addition we  show the rate of convergence with respect  
to the DOF which is presented in Figure \ref{2derrvdof}.   
Similarly in Example~\ref{3Dex1}, 
we first present the rate of convergence based on the numerical solutions of 
the 3D general elliptic PDE with smooth coefficients 
with respect to the size $h$ of triangulations in Figure~\ref{3derrvh}
and then we present the rate of convergence with respect to the DOFs. 



\begin{example}
	\label{rates2D} 
We numerically solved the general elliptic equations in Example \ref{ex0} with $D=8, r=2$ 
for testing functions $u^{s2}$ and $u^{s4}$ on different levels of the refinement to 
demonstrate the convergence behavior. 
The $L^2, H^1$ error vectors $u -u_s$ based on 
	$1001^2$ equally-spaced points over $[0, 1]^2$ with respect to the size $h=|\triangle|$ are reported in Figure~\ref{2dratevsh}. We can see that the rate of convergence is about $O(h^7)$. 
According to Theorem~\ref{mjlai05122021}, the $\|u- u_s\|_2 \le Ch^2\epsilon$. This shows that the 
numerical computation agrees with and even better than the theory we have.      
\begin{figure}[htpb]
		\centering
		\begin{tabular}{cc}
			\includegraphics[height=5cm]{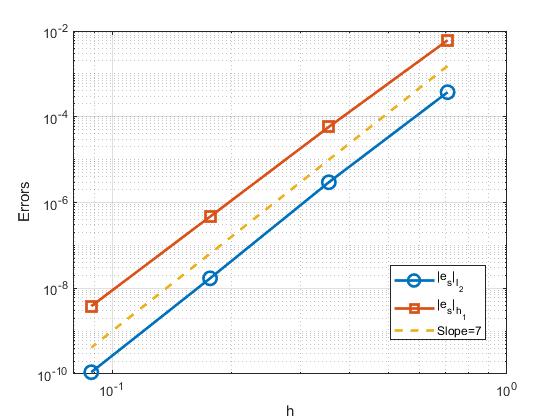}
			&\includegraphics[height=5cm]{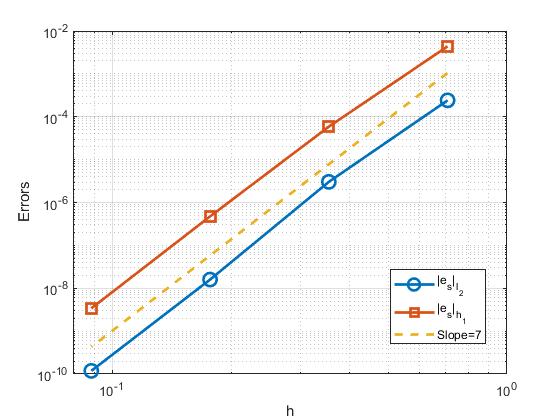}
		\end{tabular}
\caption{The RMSE in $L^2$ and $H^1$ norm of $u-u_s$ for testing functions $u^{s2}$(left) and $u^{s4}$(right) 
versus the size $h$ of
triangulation with $D=8, r=2$ where $e_s:=u-u_s$}\label{2dratevsh}
	\end{figure}
	
Next convergence results are shown in Figure \ref{2derrvdof} based on the   
DOF(=the number of triangles 
$\times \dfrac{(D+1)(D+2)}{2}$). The RMSEs  between the numerical solution and exact solutions are 
asymptotically proportional to $(\text{DOF})^{-3.5}$. 
That is, the asymptotic rate is $(\text{DOF})^{-1/(d+1)}$, where
$d=2$.   See the next example when $d=3$.  
\begin{figure}[htpb]
		\centering
		\begin{tabular}{cc}
			\includegraphics[height=5cm]{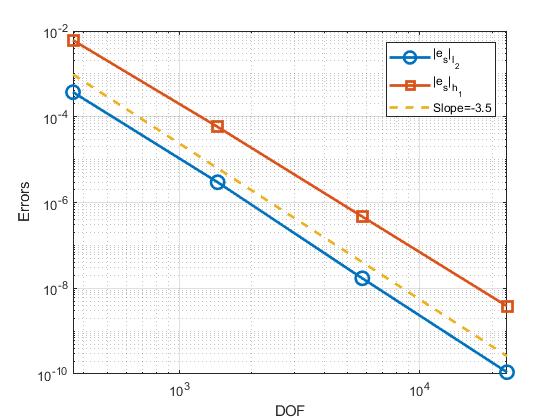}
			&\includegraphics[height=5cm]{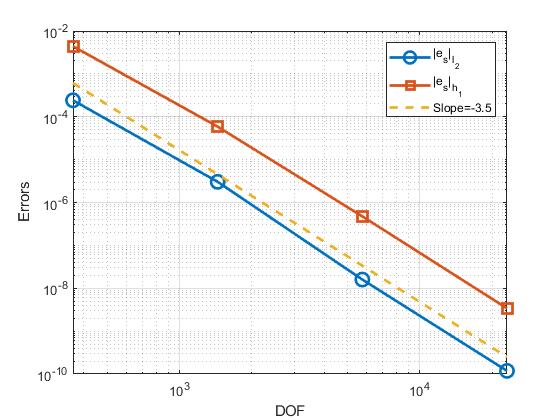}
		\end{tabular}
		\caption{The RMSE in $L^2$ and $H^1$ norm of $u-u_s$  for testing functions $u^{s2}$(left) and $u^{s4}$(right) 
versus the DOFs  with $D=8, r=2$ where $e_s:=u-u_s$ }\label{2derrvdof}
\end{figure}
\end{example}

\begin{example}
\label{3Dex1} 
We tested a 2nd order elliptic equation (\ref{GPDE2}) with smooth PDE coefficients  
	$a_{11}= x^2 +y^2, a^{22}=\cos(xy-z), a^{33}=\exp(\frac{1}{x^2+y^2+z^2+1}), 
	a^{12}+a^{21}=x^2 -y^2-z , a^{23}+a^{32}=\cos(xy-z)\sin(x-y), a^{13}+a^{31}
	= \frac{1}{y^2+z^2+1}, b_1=0, b_2=-1,b_3=\tan^{-1}(x^3-y^2+\cos(z)), c=x+y+z$, 
	where $a^{12}=a^{21}, a^{32}=a^{23}$ and $a^{13}=a^{31}.$ The testing functions are the 2 smooth solutions $u^{3ds3}, u^{3ds5}$ over the standard cube $[0,1]^3$. 
	The $L^2, H^1$ error vectors $u -u_s$ based on 
	$501^3$ equally-spaced points over $[0, 1]^3$ are reported in Figure \ref{3derrvh}. The 
errors between the numerical solution and exact solutions are asymptotically proportional to 
$\mathcal{O}(h^{7})$. We can see that the rate of convergence agrees with our theory for these smooth testing functions. 
Therefore, we conclude that the LL methods work very well.	
		\begin{figure}[htpb]
		\centering
		\begin{tabular}{cc}
			\includegraphics[height=5cm]{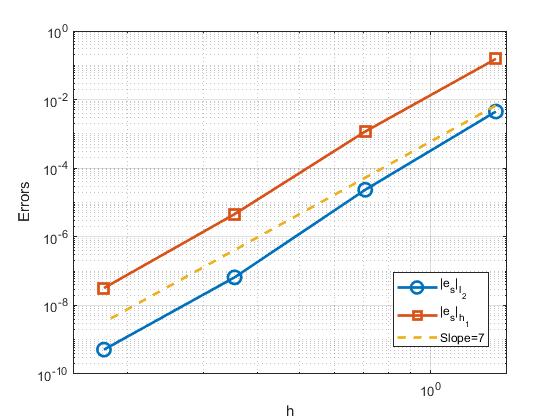}
			&\includegraphics[height=5cm]{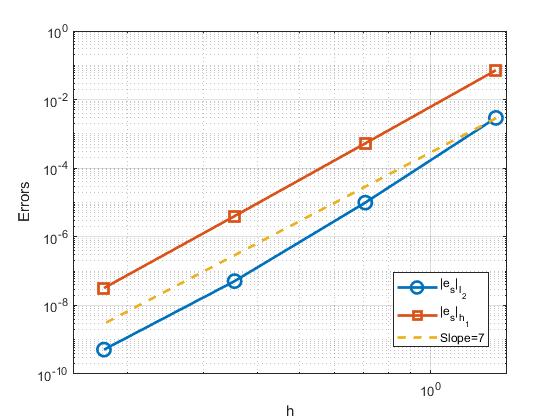}
		\end{tabular}
\caption{The RMSE in $L^2$ and $H^1$ norm of $u-u_s$  
with $D=9, r=1$ for testing functions $u^{3ds3} \text{(left)}$ and $ u^{3ds5} \text{(right)}$ 
versus the mesh size $h$}\label{3derrvh}
	\end{figure}
In addition, we show the rate of convergence with respect to the DOFs in Figure~\ref{3derrvdof} based on the   
DOF(=the number of triangles 
$\times \dfrac{(D+1)(D+2)(D+3)}{6}$). 
\begin{figure}[htpb]
		\centering
		\begin{tabular}{cc}
			\includegraphics[height=5cm]{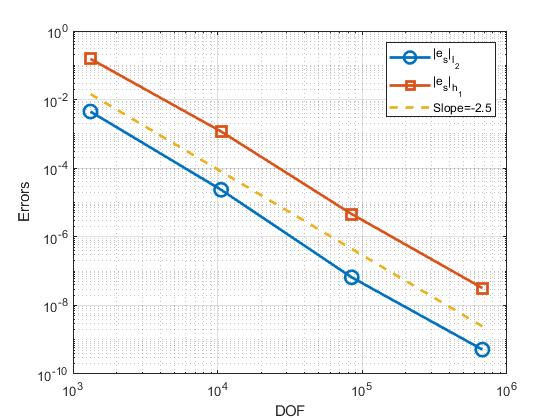}
			&\includegraphics[height=5cm]{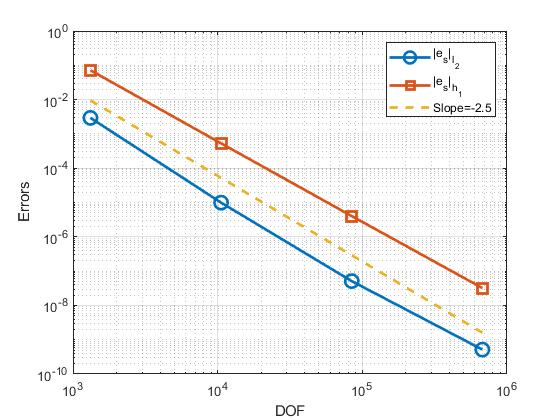}
		\end{tabular}
\caption{The RMSE in $L^2$ and $H^1$ norm of $u-u_s$  with $D=9, r=1$ for  testing functions $u^{3ds3} \text{(left)}$
and $ u^{3ds5} \text{(right)}$ versus the DOFs}\label{3derrvdof}
	\end{figure}
\end{example}

\section{Appendix:  Convergence of Algorithm~\ref{alg1}}
In this section, we first explain Algorithm~\ref{alg1} which is used 
to solve the minimization problem \eqref{min1}. In fact, Algorithm~\ref{alg1} is derived 
based on the solution to the following minimization
 \begin{align}
\label{min0}
\min_{\bf c} J(c)=\frac{1}{2}(\alpha \|B{\bf c}- {\bf g}\|^2+\beta 
\|H_r{\bf c}\|^2+ \gamma \|H_0{\bf c}\|^2 )  
\quad \text{subject to } -K{\bf c} = {\bf f},
\end{align}
where $B, {\bf g}$ are associated with the boundary condition, $H_r$ is associated with 
the smoothness condition  $\alpha>0, \beta>0$ are fixed parameters. Let us give a reason 
why we use (\ref{min0}) to replace (\ref{min1}). By Lemma~\ref{lem1}, we know spline 
functions can approximate the solution of the PDE very well when the solution $u$ is in 
$H^3(\Omega)$. When the size $|\triangle|$ is small enough, for the quasi-interpolatory 
spline $S_u$ can approximate $u$ such that $\|\Delta (u-S_u)\|_{L^2(\Omega)}\le \epsilon_1.$ 
That is, the feasible set of (\ref{min1}) is not empty. Thus, two minimization problems (\ref{min1}) 
and (\ref{min0}) are closely related to each other. Even though there is not $\bfc$ satisfying 
$-K\bfc=\bff$ exactly, a numerical computation in a computer will give a nearby solution $\bfc$ 
such that $\|K\bfc+\bff\|\le \epsilon_1$.   
We thus seek a spline solution $u_s$ satisfying (\ref{min0}).  

We use the similar technique in \cite{AL05} and \cite{ALW06}. 
For convenience, we first consider the problem
 \begin{align}
\min_{\bf c} J(c)=\frac{1}{2}(\alpha \|B{\bf z}- {\bf g}\|^2+\beta \|H{\bf z}\|^2 )  
\quad \text{subject to } -K{\bf z} = {\bf f},
\end{align}
where $B, {\bf g}$ are from the boundary condition, $H$ is from the smoothness condition.
  By the theory of Lagrange multipliers, letting
\begin{align*}
 \mathcal{U}(z,\lambda)=\frac{1}{2}(\alpha z^\intercal B^\intercal Bz- \alpha  z^\intercal B^\intercal G -\alpha  G^\intercal Bz+\alpha G^\intercal G +\beta z^\intercal H^\intercal H z)
+\lambda^\intercal (K{\bf z} + {\bf f}),   
\end{align*}
there exist $\lambda$ such that
\begin{align}
    \frac{\partial \mathcal{U}}{\partial z}=\alpha B^\intercal Bz-\alpha B^\intercal G +\beta H^\intercal Hz+K^\intercal \lambda =0\\
        \frac{\partial \mathcal{U}}{\partial \lambda}=K{\bf z} + {\bf f}=0
\end{align}
We can rewrite above linear equations as follow:
    \begin{align}
    \label{min10}
   \begin{bmatrix}K^\intercal &\alpha B^\intercal B+\beta H^\intercal H \\ O &K\end{bmatrix} \begin{bmatrix}\lambda \\z\end{bmatrix}=\begin{bmatrix} \alpha B^\intercal G \\ -{\bf f}
   \end{bmatrix}
\end{align}
To solve (\ref{min10}), 
we consider the following sequence of problems for a fixed $\epsilon>0$:
   \begin{align}
    \label{min11}
   \begin{bmatrix}K^\intercal &\alpha B^\intercal B+\beta H^\intercal H \\ -\epsilon I &K\end{bmatrix} \begin{bmatrix}\lambda^{(k+1)} \\z^{(k+1)}\end{bmatrix}=\begin{bmatrix}\alpha  B^\intercal G \\ -{\bf f}-\epsilon \lambda^{(k)}
   \end{bmatrix}
\end{align}
for $k=0,1,\cdots,$ with an initial guess $\lambda^{(0)}=0$.
  Note that \eqref{min11} reads 
    \begin{align*}
        (\alpha B^\intercal B+\beta H^\intercal H)z^{(k+1)}+K^\intercal \lambda^{(k+1)}=\alpha B^\intercal G\\
   K z^{(k+1)} -\epsilon \lambda^{(k+1)} =-{\bf f}-\epsilon \lambda^{(k)}
    \end{align*}
Multiplying on the both sides of the second equation in \eqref{min11} by $K^\intercal$, we get 
    $$K^\intercal K z^{(k+1)} -\epsilon K^\intercal \lambda^{(k+1)}=-K^\intercal{\bf f}-\epsilon K^\intercal \lambda^{(k)}$$
    or $$K^\intercal \lambda^{(k+1)}=\frac{1}{\epsilon}K^\intercal K z^{(k+1)}-\frac{1}{\epsilon}K^\intercal{\bf f}+K^\intercal \lambda^{(k)}$$ and substitute it into the first equation in \eqref{min11} to get
    \begin{align*}
        (\alpha B^\intercal B+\beta H^\intercal H)z^{(k+1)}+\frac{1}{\epsilon}K^\intercal K z^{(k+1)}-\frac{1}{\epsilon}K^\intercal{\bf f}+K^\intercal \lambda^{(k)}=\alpha B^\intercal G.
    \end{align*}
 Simplifying the above equation leads to 
     \begin{align}
   \label{eq12}
    (\alpha B^\intercal B+\beta H^\intercal H+\frac{1}{\epsilon}K^\intercal K)z^{(k+1)}=\alpha B^\intercal G+\frac{1}{\epsilon}K^\intercal{\bf f}-K^\intercal \lambda^{(k)}
   \end{align}
   It follows that
     \begin{align}\label{initial}
    z^{(1)}=(\alpha B^\intercal B+\beta H^\intercal H+\frac{1}{\epsilon}K^\intercal K)^{-1}(\alpha B^\intercal G+\frac{1}{\epsilon}K^\intercal{\bf f}-K^\intercal \lambda^{(0)})
   \end{align}
   Using the first equation in \eqref{min11},i.e., $(\alpha B^\intercal B+\beta H^\intercal H)z^{(k+1)}=\alpha B^\intercal G-K^\intercal \lambda^{(k+1)}$ to replace $\alpha B^\intercal G$ in \eqref{eq12}, we have
 \begin{align}\label{algorithm}
 (\alpha B^\intercal B+\beta H^\intercal H+\frac{1}{\epsilon}K^\intercal K)z^{(k+1)}=(\alpha B^\intercal B+\beta H^\intercal H)z^{(k)}+\frac{1}{\epsilon}K^\intercal{\bf f}.
 \end{align}
We get the minimizer using \eqref{initial} and \eqref{algorithm}. 
These lead to Algorithm~\ref{alg1}. 

Next we show  the convergence of the above iterative algorithm. Since the minimization 
problem (\ref{min1}) is convex over a convex feasible set, we know that the minimization 
has a unique solution. We may assume that the linear system from Lagrange multiplier method 
has a  solution pair $(\lambda, z)$ with a unique solution $z$ if the size $|\triangle|$ 
of triangulation $\triangle$ 
is small enough and the spline space $S^r_D(\triangle)$ is dense enough in $H^2(\Omega)\cap 
H^1_0(\Omega)$. 
\begin{theorem}
Suppose that the matrices $K, H, B$ satisfy the following consistent condition: 
if $Kz=0, H z=0$, and $Bz=0$, one has $z=0.$  
 Then there exists a constant $\tilde{C}(\epsilon)$ depending on $\epsilon$ 
but independent of the iteration number $k$ such that
$$\|z^{(k+1)}-z\|\le \|\tilde{K}^{-1}\|\|K^\intercal \|\Big{(}\frac{\tilde{C} \epsilon}{1+\tilde{C} \epsilon}\Big{)}^{k+1}$$
for $k\geq 1,$ 
where $\tilde{C} =\|K^+\|^2 \|\alpha B^\intercal B+\beta H^\intercal H\|$  
and $K^+$ stands for the pseudo inverse of $K$ and 
$\tilde{K}=\alpha B^\intercal B+\beta H^\intercal H+\frac{1}{\epsilon}K^\intercal K$.  
\end{theorem}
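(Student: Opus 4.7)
The plan is to track the errors $e^{(k)} := z^{(k)} - z$ and $\mu^{(k)} := \lambda^{(k)} - \lambda$, where $(z,\lambda)$ solves the limiting KKT system (\ref{min10}). First I would note that the consistency assumption on $K, H, B$ ensures $\tilde{K}$ is symmetric positive definite, and that $\lambda$ may be chosen in $\text{Im}(K)$ (its $\text{Ker}(K^\intercal)$-component does not affect the KKT equations), so that $\mu^{(0)} \in \text{Im}(K)$ under the stated initial assumption. Subtracting the limit system from the two-block iteration (\ref{min11}) gives $S e^{(k+1)} + K^\intercal \mu^{(k+1)} = 0$ and $K e^{(k+1)} - \epsilon \mu^{(k+1)} = -\epsilon \mu^{(k)}$, where $S := \alpha B^\intercal B + \beta H^\intercal H$. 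Solving for $\mu^{(k+1)}$ in the second and substituting into the first produces the coupled recursion $\tilde{K}e^{(k+1)} = -K^\intercal\mu^{(k)}$ and $\mu^{(k+1)} = M\mu^{(k)}$, with $M := I - \tfrac{1}{\epsilon}K\tilde{K}^{-1}K^\intercal$; a direct check shows $M$ sends $\text{Im}(K)$ into itself, so $\mu^{(k)}$ remains in this invariant subspace.

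The heart of the proof is the contraction estimate $\|M\mu\| \leq \rho\|\mu\|$ on $\text{Im}(K)$ with $\rho = \tilde{C}\epsilon/(1+\tilde{C}\epsilon)$. Because $M = I - N$ with $N := \tfrac{1}{\epsilon}K\tilde{K}^{-1}K^\intercal$ self-adjoint and positive semidefinite, and a short Cauchy--Schwarz argument gives the easy upper bound $\|N\| \leq 1$, this reduces to the Rayleigh-quotient lower bound $v^\intercal N v \geq \|v\|^2/(1+\tilde{C}\epsilon)$ for every $v \in \text{Im}(K)$. To obtain it I would invoke the concave-quadratic variational identity $v^\intercal K\tilde{K}^{-1}K^\intercal v = \max_y \bigl[2 v^\intercal K y - y^\intercal \tilde{K} y\bigr]$, write $v = K z$ with $z$ the unique representative in $\text{Ker}(K)^\perp$ (so that $\|z\| \leq \|K^+\|\,\|v\|$), and test with $y = c z$. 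The bracket then simplifies to $2c\|v\|^2 - c^2 z^\intercal S z - (c^2/\epsilon)\|v\|^2$; bounding $z^\intercal S z \leq \|S\|\,\|K^+\|^2\|v\|^2 = \tilde{C}\|v\|^2$ gives the lower estimate $\|v\|^2[2c - c^2(\tilde{C} + 1/\epsilon)]$, whose maximum in $c$ at $c^* = \epsilon/(1+\tilde{C}\epsilon)$ is exactly $\epsilon\|v\|^2/(1+\tilde{C}\epsilon)$, which is the required bound after dividing by $\epsilon$.

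Iterating the contraction yields $\|\mu^{(k)}\| \leq \rho^k\|\mu^{(0)}\|$, and combining with $e^{(k+1)} = -\tilde{K}^{-1}K^\intercal\mu^{(k)}$ and the submultiplicative norm inequality produces $\|z^{(k+1)}-z\| \leq \|\tilde{K}^{-1}\|\,\|K^\intercal\|\,\rho^k\,\|\mu^{(0)}\|$, which is the stated bound after absorbing $\|\mu^{(0)}\|$ into the normalization of the initial guess or into one additional contraction step. The main obstacle is producing the precise constant $\tilde{C} = \|K^+\|^2\|S\|$ in the contraction factor: a blunt estimate such as bounding $\|M\|$ directly or applying Cauchy--Schwarz to $K\tilde{K}^{-1}K^\intercal$ fails to capture the delicate interplay between $S$ and $\tfrac{1}{\epsilon}K^\intercal K$ hidden inside $\tilde{K}$. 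The variational identity makes this interplay explicit as the coefficient $\tilde{C} + 1/\epsilon$ of $c^2$, and the optimal test vector $y = c^* z$ balances the $S$- and $K^\intercal K$-terms so that the extracted rate is exactly $\rho$.
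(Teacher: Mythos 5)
Your proof follows essentially the same route as the paper's: the same error identities $\tilde{K}(z^{(k+1)}-z) = -K^\intercal(\lambda^{(k)}-\lambda)$ and $\lambda^{(k+1)}-\lambda = (I-\frac{1}{\epsilon}K\tilde{K}^{-1}K^\intercal)(\lambda^{(k)}-\lambda)$ on the invariant subspace $\text{Im}(K)$, followed by the same Rayleigh-quotient contraction estimate with the same constant $\tilde{C}=\|K^+\|^2\|\alpha B^\intercal B+\beta H^\intercal H\|$. The only difference is that you actually derive the key lower bound $\frac{1}{\epsilon}v^\intercal K\tilde{K}^{-1}K^\intercal v \ge \|v\|^2/(1+\tilde{C}\epsilon)$ via the variational identity and the test vector $y=c^*z$ (the paper merely cites this as a technique from \cite{ALW06}), and your computation there is correct, as is your honest handling of the $\|\lambda^{(0)}-\lambda\|$ normalization that also appears in the paper's final display.
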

\begin{proof}
First, we show that $\tilde{K}$ is invertible for $\alpha, \beta >0$. 
If $\tilde{K}z=0,$ we have 
 $$c^\intercal \tilde{K} c=\alpha \|Bc\|^2+\beta \|Hc\|^2+\frac{1}{\epsilon}\|Kc\|^2=0$$
which implies that $Kz=0, Hz=0, Bz=0$. By the assumption, $z=0.$ Thus, $\tilde{K}$ is invertible and hence the sequence $\{z^{(k)}\}$ is well-defined. Let $\tilde{C}_1= \|\tilde{K}\|$ which depends on $\epsilon.$\\
From \eqref{min10} and \eqref{eq12} ,
\begin{align*}
  \tilde{K} z^{(k+1)}&=\alpha B^\intercal G+\frac{1}{\epsilon}K^\intercal{\bf f}-K^\intercal \lambda^{(k)}\\
   \tilde{K} z&=\alpha B^\intercal G+\frac{1}{\epsilon}K^\intercal{\bf f}-K^\intercal \lambda .
\end{align*}
Hence, we have
\begin{align}\label{eq14}
z^{(k+1)}-z= \tilde{K}^{-1} K^\intercal (\lambda-\lambda^{(k)}).
\end{align}
By using \eqref{min11} and \eqref{eq12}, we get
\begin{align*}
-\epsilon (\lambda^{(k+1)}-\lambda)=-\epsilon (\lambda^{(k)}-\lambda)-\textbf{f}-K z^{(k+1)}
\end{align*} 
and
\begin{align*}
z^{(k+1)}=\tilde{K}^{-1}(\alpha B^\intercal G+\frac{1}{\epsilon} K^\intercal \textbf{f}- K^\intercal \lambda^{(k)}).
\end{align*} 
It follows that
\begin{align*}
-\epsilon (\lambda^{(k+1)}-\lambda)&=-\epsilon (\lambda^{(k)}-\lambda)-\textbf{f}-K z^{(k+1)}\\
&=-\epsilon (\lambda^{(k)}-\lambda)-\textbf{f}-K \tilde{K}^{-1}(\alpha B^\intercal G+\frac{1}{\epsilon} K^\intercal \textbf{f}- K^\intercal \lambda^{(k)})\\
&=-\epsilon (\lambda^{(k)}-\lambda)-\textbf{f}-K \tilde{K}^{-1}(\tilde{K}z+K^\intercal \lambda- K^\intercal \lambda^{(k)})\\
&=-\epsilon (\lambda^{(k)}-\lambda)-\textbf{f}-Kz-K  \tilde{K}^{-1} K^\intercal (\lambda- \lambda^{(k)}).
\end{align*}
As a result, we get 
\begin{align}\label{eq15}
 (\lambda^{(k+1)}-\lambda)= (\lambda^{(k)}-\lambda) (I-\frac{1}{\epsilon} K  \tilde{K}^{-1} K^\intercal ).
\end{align}
In order to show the next step, we use Lemma 7 in \cite{ALW06}, i.e., $\mathbb{R}^m=\text{Ker}(K^\intercal)\oplus \text{Im}(K)$ where $\text{Ker}(K^\intercal)$ is the kernel of $K^\intercal$.
Assume that $\lambda \in \text{Im}(K)$. By the second equation in \eqref{min11} that 
$$K(z^{(k+1)}-z)=\epsilon (\lambda^{(k)}-\lambda^{(k+1)}).$$
That is, $\lambda^{(k)}-\lambda^{(k+1)}$ is in the $\text{Im}(K)$ and therefore $$\lambda^{(k)}-\lambda=\sum_{j=1}^k (\lambda^{(j)}-\lambda^{(j-1)}) +(\lambda^{(0)}-\lambda),$$
we have $\lambda^{(k)}-\lambda \in \text{Im}(K)$ for each $k.$ From \eqref{eq15}, we need to estimate the norm of $I-\frac{1}{\epsilon} K  \tilde{K}^{-1} K^\intercal$ restricted to $\text{Im}(K)$ in order to estimate the norm of $\lambda^{(k+1)}-\lambda.$ We write $\|I-\frac{1}{\epsilon} K  \tilde{K}^{-1} K^\intercal\|$ for $\|(I-\frac{1}{\epsilon} K  \tilde{K}^{-1} K^\intercal )|_{\text{Im}(K)}\|$ and we have:
$$\|\lambda^{(k+1)}-\lambda\|\le \|I-\frac{1}{\epsilon} K  \tilde{K}^{-1} K^\intercal\| \|\lambda^{(k)}-\lambda\|.$$

We claim that 
$$ \|I-\frac{1}{\epsilon} K  \tilde{K}^{-1} K^\intercal\|  \le \frac{\tilde{C}_2 \epsilon}{1+\tilde{C}_2 \epsilon},$$
for some constant $\tilde{C}_2>0.$ Indeed, by the Rayleigh-Ritz quotient, we have
$$\|\lambda^{(k+1)}-\lambda\|\le \|I-\frac{1}{\epsilon} K  \tilde{K}^{-1} K^\intercal\|=\max_{0\neq v\in \text{Im}(K)}  (1-\frac{1}{\epsilon} \frac{v^\intercal K  \tilde{K}^{-1} K^\intercal v}{v^\intercal v}).$$
By using a technique from \cite{ALW06}, we can get 
$$\frac{1}{\epsilon} \frac{v^\intercal K  \tilde{K}^{-1} K^\intercal v}{v^\intercal v}>\frac{1}{1+\tilde{C}_2 \epsilon}, ~~\forall v\in \text{Im}(K)$$
where $\tilde{C}_2 =\|K^+\|^2 \|\alpha B^\intercal B+\beta H^\intercal H\|.$
It follows that 
$$ \|I-\frac{1}{\epsilon} K  \tilde{K}^{-1} K^\intercal\| \le 1-\frac{1}{1+\tilde{C}_2 \epsilon}=\frac{\tilde{C}_2 \epsilon}{1+\tilde{C}_2 \epsilon}.$$
As a results, we obtain
$$\|\lambda^{(k+1)}-\lambda\|\le \frac{\tilde{C}_2 \epsilon}{1+\tilde{C}_2 \epsilon}\|\lambda^{(k)}-\lambda\|$$
and from \eqref{eq14}
$$\|z^{(k+1)}-z\|\le \|\tilde{K}^{-1}\|\|K^\intercal \|\Big{(}\frac{\tilde{C}_2 \epsilon}{1+\tilde{C}_2 \epsilon}\Big{)}^{k+1}\|\lambda^{(0)}-\lambda\|.$$
\end{proof}
 
\end{document}